\numberwithin{equation}{section}
\definecolor{aoe}{rgb}{0.0, 0.5, 0.0}
\newcommand{\N}{\mathsf{N}}
\newcommand{\Q}{Q_{\ell, r}}
\newcommand{\e}{\varepsilon}
\newcommand{\mfI}{\mathfrak{I}}
\newcommand{\mfIHS}{\mathfrak{I}^{\text{HS}}}
\newcommand{\mfIuf}{\overline{\mathfrak{I}}}
\newcommand{\blk}[1]{\mu_{(n)}}
\newcommand{\blkHS}[1]{\mu_{(n)}^{\text{HS}}}
\newcommand{\Muf}{\overline{\mathcal{M}}_1 (\SS)}
\newcommand{\Ber}{\text{Ber}}
\newcommand{\mbP}{\mathbb{P}}
\newcommand{\weta}{\wtil{\eta}}
\newcommand{\wxi}{\wtil{\xi}}
\newcommand{\ZZ}{\mathbb{Z}}
\newcommand{\PP}{\mathcal{P}}
\newcommand{\II}{\mathcal{I}}
\newcommand{\IIt}{\widetilde{\mathcal{I}}}
\newcommand{\IIf}{\mathcal{I}^{\HS}}
\newcommand{\MM}{\mathcal{M}}
\newcommand{\Id}{\mathsf{Id}}
\newcommand{\qNB}{q\text{NB}}
\renewcommand{\L}{\mathsf{L}}
\newcommand{\GG}{\mathcal{G}}
\renewcommand{\SS}{\mathcal{S}}
\renewcommand{\mod}[2]{#1_{[#2]}}
\newcommand{\HS}{\text{HS}}
\newcommand{\bLambda}{\boldsymbol{\Lambda}}
\newcommand{\Sbar}{\overline{S}}
\newcommand{\Ibar}{\overline{\II}}
\newcommand{\mubar}{\overline{\mu}}
\newcommand{\Pbar}{\overline{\mathcal{P}}}
\newcommand{\wtil}[1]{\widetilde{#1}}
\newtheorem{theorem}{Theorem}[section]
\newtheorem{defin}[theorem]{Definition}
\newtheorem{prop}[theorem]{Proposition}
\newtheorem{cor}[theorem]{Corollary}
\newtheorem{lemma}[theorem]{Lemma}
\newtheorem{remark}[theorem]{Remark}
\newcommand{\red}[1]{\textcolor{black}{#1}}
\begin{document}
\title{Classification of Stationary distributions for the stochastic vertex models}
\author{Yier Lin}
\address[Yier Lin]{Department of Statistics, University of Chicago}
\email{ylin10@uchicago.edu}
\begin{abstract}
	In this paper, we study the stationary distributions for the stochastic vertex models. Our main focus is the
	stochastic six vertex (S6V) model. We show that the extremal stationary distributions of the S6V model are given by product Bernoulli measures. Moreover, for the S6V model under a moving frame \red{of} speed $1$, we show that the extremal stationary distributions are given by product Bernoulli measures and blocking measures. 
	\red{Finally}, we generalize our results to the stochastic higher spin six vertex model.  
	Our proof relies on the coupling of the S6V models introduced in \cite{aggarwal2020limit}, the analysis of current and the method of fusion. 
\end{abstract}
\maketitle
\section{Introduction} 
Stationary distribution is an important object for Markov processes and particularly for interacting particle systems. \red{A stationary distribution of a Markov process is a probability distribution that remains unchanged in the Markov process as time progresses.} When the state space is countable and the Markov process is irreducible, it is known that the existence of a stationary distribution is equivalent to the property of positive recurrence. Moreover, the stationary distribution is unique. However, in many examples of interacting particle systems, the state space is uncountable and there are an infinite number of stationary distributions. This paper intends to study and classify the set of stationary distributions for a family of interacting particle systems -- the stochastic six vertex (S6V) model and its higher spin generalizations. 
\subsection{The S6V model}
\label{sec:s6v}
The S6V model is a classical model in two dimension statistical physics. The model was introduced by \cite{gwa1992six} as a special case of the six vertex model \cite{Lieb74, Bax16}. For each vertex in $\mathbb{Z}^2$, 
\red{we tile it with one of the six vertex configurations  
in Figure \ref{fig:vertexconfig}.} 
\red{For a vertex configuration, we view the lines from the left and bottom of the vertex as \emph{input lines} and view the lines to the right and above as \emph{output lines}. The vertex configurations among different vertices need to be compatible in the sense that the lines keep flowing. More precisely, the vertex configuration at $(x, y)$ has a horizontal output line iff the vertex configuration at $(x+1, y)$ has a horizontal input line. Similarly, the vertex configuration at $(x, y)$ has a vertical output line iff the vertex configuration at $(x, y+1)$ has a vertical input line.} 
The weight of each \red{vertex} configuration is parameterized by two parameters $b_1 , b_2 \in (0, 1)$.  We say that the vertex configuration is \emph{conservative} since the total number of input lines that flow into the vertex is always equal to the number of output lines. We call the six vertex model 
\emph{stochastic} since given the number of input lines from the left and bottom, the sum of the weights of all possible \red{vertex} configurations equals $1$.  
\begin{figure}[ht]
	\centering
	\begin{adjustbox}{valign=t}
		\begin{tabular}{|c|c|c|c|c|c|c|}
			\hline
			Type & I & II & III & IV & V & VI \\
			\hline
			\begin{tikzpicture}[scale = 1.5]
			\draw[fill][white] (0.5, 0) circle (0.05);
			\draw[thick][white] (0, 0) -- (1,0);
			\draw[thick][white] (0.5, -0.5) -- 
			(0.5,0.5);
			\node at (0.5, 0) {Configuration};
			\end{tikzpicture}
			&
			\begin{tikzpicture}[scale = 1.2]
			\draw[fill] (0.5, 0) circle (0.05);
			\draw[thick] (0, 0) -- (1,0);
			\draw[thick] (0.5, -0.5) -- (0.5,0.5);
			\end{tikzpicture}
			&
			\begin{tikzpicture}[scale = 1.2]
			\draw[thick][white] (0, 0) -- (1,0);
			\draw[thick][white] (0.5, -0.5) -- (0.5,0.5);
			\draw[fill] (0.5, 0) circle (0.05);
			\end{tikzpicture}
			&
			\begin{tikzpicture}[scale = 1.2]
			\draw[thick] (0, 0) -- (1,0);
			\draw[thick][white] (0.5, -0.5) -- (0.5,0.5);
			\draw[fill] (0.5, 0) circle (0.05);
			\end{tikzpicture}
			&
			\begin{tikzpicture}[scale = 1.2]
			\draw[thick] (0, 0) -- (0.5,0);
			\draw[thick] (0.5, 0) -- (0.5, 0.5);
			\draw[thick][white] (0.5, 0) -- (1, 0);
			\draw[thick][white] (0.5, -0.5) -- (0.5, 0);
			(0.5,0.5);
			\draw[fill] (0.5, 0) circle (0.05);
			\end{tikzpicture}
			&
			\begin{tikzpicture}[scale = 1.2]
			\draw[thick][white] (0, 0) -- (1,0);
			\draw[thick] (0.5, -0.5) -- (0.5,0.5);
			\draw[fill] (0.5, 0) circle (0.05);
			\end{tikzpicture}
			&
			\begin{tikzpicture}[scale = 1.2]
			\draw[thick][white] (0, 0) -- (0.5,0);
			\draw[thick][white] (0.5, 0) -- (0.5, 0.5);
			\draw[thick] (0.5, 0) -- (1, 0);
			\draw[thick] (0.5, -0.5) -- (0.5, 0);
			(0.5,0.5);
			\draw[fill] (0.5, 0) circle (0.05);
			\end{tikzpicture}
			\\
			\hline
			Weight 
			& 1 & 1 & $b_2$ & $1- b_2$ & $b_1$ & $1-b_1$\\
			\hline
\end{tabular}
\end{adjustbox}
\caption{Six types of configurations for a vertex in $\mathbb{Z}^2$.}
	\label{fig:vertexconfig}
\end{figure}

By \cite{borodin2016stochastic}, it is known that  the S6V model  belongs to the Kardar-Parisi-Zhang (KPZ) universality class  -- a class of models that manifest the universal statistical behavior (such as Tracy-Widom fluctuation) in their long time / large-scale limit, see \cite{KPZ86, corwin2012kardar} for survey. 
It has been shown that under weakly asymmetric scaling, the S6V model 
converges  to the KPZ equation \cite{corwin2020stochastic}\red{.} 
On the other hand, it is shown that under a different scaling regime, the S6V model converges to the stochastic telegraph equation \cite{borodin2019stochastic, shen2019stochastic}. Other recent works on the S6V model include \cite{borodin2016stochastic, aggarwal2017convergence, aggarwal2018current, lin2019markov, aggarwal2020limit, dimitrov2020two, kuan2021short}.

In this paper, we interpret the S6V model as an interacting particle system. 
We consider the vertices on the integer lattice points of the upper half \red{plane $\mathbb{Z} \times 
\mathbb{Z}_{\geq 0}$.} We assume that there is no input line entering from the left and 
there are a finite number of input lines entering from the bottom. 
We first sample the output lines of a vertex whose input line has been specified, according to the stochastic weights given in Figure \ref{fig:vertexconfig}. Proceeding with this sampling, we get a collection of \red{upright} paths. 
By viewing the horizontal axis as space and vertical axis as time, these lines can be viewed as trajectories of particles. We cut the upper half plane by the dotted lines $y = t - \frac{1}{2}$ \red{for $t \in \mathbb{Z}_{\geq 0}$}, the intersection of these lines with the upright paths represents the location of the particles at time $t$, see Figure \ref{fig:S6Vparticle}.

To state our result, let us begin with a \red{concrete} definition of the S6V model. Our definition follows \cite[Definition 2.1]{aggarwal2020limit}, which intrinsically goes back to \cite[Section 2.2]{borodin2016stochastic}. 
\begin{figure}[ht]
\centering
\begin{tikzpicture}
\draw[thick][aoe][dashed][->] (-1, 0) -- (12, 0);
\draw[thick][aoe][dashed][->] (1, -1) -- (1, 4);
\draw[thick](2, -0.5) -- (2, 0);
\draw[thick](4, -0.5) -- (4, 0);
\draw[thick] (6, -0.5) -- (6, 0.5);
\draw[thick](8, -0.5) -- (8, 0);
\draw[thick] (2, 0) -- (4, 0);
\draw[thick] (4, 0) -- (4, 0.5);
\draw[thick] (4,0) -- (5,0);
\draw[thick] (5,0) -- (5, 0.5);
\draw[thick] (8, 0) -- (9, 0);
\draw[thick] (9, 0) -- (9, 0.5);
\node at (1, 4.3) {$y$};
\draw[thick] (4, 0.5) -- (4, 1.5);
\draw[thick] (5, 0.5) -- (5,1);
\draw[thick] (5, 1) -- (6, 1);
\draw[thick] (6, 1) -- (6, 1.5);
\draw[thick] (6, 0.5) -- (6, 1);
\draw[thick](6, 1) -- (9, 1);
\draw[thick] (9, 0.5) -- (9, 1);
\draw[thick] (9, 1) -- (10,1);
\draw[thick] (9, 1) -- (9, 1.5);
\draw[thick](10, 1) -- (10, 1.5);
\draw[thick] (4, 1.5) -- (4, 2);
\draw[thick](4, 2) -- (5, 2);
\draw[thick] (5, 2) -- (5, 2.5);
\draw[thick] (6,1.5) -- (6, 2);
\draw[thick] (6, 2) -- (7,2);
\draw[thick](7, 2) -- (7, 2.5);
\draw[thick] (10, 1.5) -- (10, 2);
\draw[thick] (10, 2) -- (11, 2);
\draw[thick] (11, 2) -- (11,2.5);
\draw[thick] (9, 1.5) -- (9, 2.5);
\draw[thick] (5, 2.5) -- (5, 3);
\draw[thick] (5, 3) -- (7, 3);
\draw[thick] (7, 2.5) -- (7, 3);
\draw[thick] (7 ,3) -- (7,4);
\draw[thick] (7, 3) -- (9,3);
\draw[thick] (9,3) -- (9, 4);
\draw[thick] (9, 2.5) -- (9, 3);
\draw[thick] (9, 3) -- (10, 3);
\draw[thick] (10, 3) -- (10, 4);
\draw[thick] (11, 2.5) -- (11, 3);
\draw[thick](11, 3) -- (11, 4);
\draw[dotted, thick] (0.5, -0.5) -- (12, -0.5);
\draw[dotted, thick] (0.5, 0.5) -- (12, 0.5);
\draw[dotted, thick] (0.5, 1.5) -- (12, 1.5);
\draw[dotted, thick] (0.5, 2.5) -- (12, 2.5);
\draw[dotted, thick] (0.5, 3.5) -- (12, 3.5);
\node at (0, -0.5) {$t = 0$};
\node at (0, 0.5) {$t = 1$};
\node at (0, 1.5) {$t = 2$};
\node at (0, 2.5) {$t = 3$};
\node at (0, 3.5) {$t = 4$};
\node at (5.5, -1.5) {\textbf{space}};
\node at (-1, 1.75) {\textbf{time}};
\draw[fill][red] (2, -0.5) circle (0.1); 
\draw[fill][red] (4, -0.5) circle (0.1); 
\draw[fill][red] (6, -0.5) circle (0.1); 
\draw[fill][red] (8, -0.5) circle (0.1); 
\draw[fill][red] (4, 0.5) circle (0.1); 
\draw[fill][red] (5, 0.5) circle (0.1); 
\draw[fill][red] (6, 0.5) circle (0.1); 
\draw[fill][red] (9, 0.5) circle (0.1); 
\draw[fill][red] (4, 1.5) circle (0.1); 
\draw[fill][red] (6, 1.5) circle (0.1); 
\draw[fill][red] (9, 1.5) circle (0.1); 
\draw[fill][red] (10, 1.5) circle (0.1); 
\draw[fill][red] (5, 2.5) circle (0.1); 
\draw[fill][red] (7, 2.5) circle (0.1); 
\draw[fill][red] (9, 2.5) circle (0.1); 
\draw[fill][red] (11, 2.5) circle (0.1); 
\draw[fill][red] (7, 3.5) circle (0.1); 
\draw[fill][red] (9, 3.5) circle (0.1); 
\draw[fill][red] (10, 3.5) circle (0.1); \draw[fill][red] (11, 3.5) circle (0.1);
\node at (12.3, 0) {$x$};
\end{tikzpicture}
\caption{The S6V model can be viewed as an interacting particle system.}
\label{fig:S6Vparticle}
\end{figure}
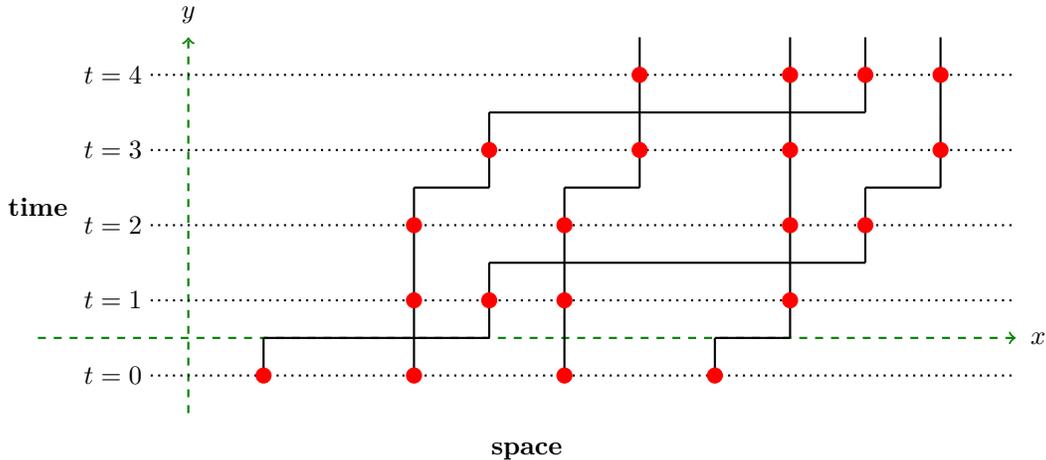 
\begin{defin}[S6V model with \red{finitely many}  particles]\label{def:s6v}
Fix $M, N \in \mathbb{Z}_{\geq 0}$ and $b_1, b_2 \in (0, 1)$. We define the S6V model 
as a discrete \red{time-homogeneous} Markov process $\{\mathbf{p}_t: t \in \mathbb{Z}_{\geq 0}\}$. Here, $\mathbf{p}_t = (p_t (-M) < \dots < p_t (N))$, where the integer $p_t (i)$ denotes the location of the $i$-th particle at time $t$. We sample independent random variables $\{\chi_t (x), j_t (x): x\in \mathbb{Z}, t \in \mathbb{Z}_{\geq 0}\}$ such that for all $t, x$,
\begin{equation}\label{eq:updaterule}
\begin{split}
&\mathbb{P}(\chi_t(x) = 1) = b_1, \qquad \mathbb{P}(\chi_t (x) = 0) = 1 - b_1, \\ &\mathbb{P}(j_t (x) = n) = (1 - b_2) b_2^{n-1}, \qquad \forall \, n \in \mathbb{Z}_{\geq 1}.
\end{split}
\end{equation}
It suffices to define the update rule from $\mathbf{p}_t$ to $\mathbf{p}_{t+1}$. For the convenience of notation, we set $p_{t+1}(-M-1) := -\infty$ and $p_{t+1} (N+1) := +\infty$. Given $\mathbf{p}_t$, we define $p_{t+1} (i), i = -M, \dots, N$ recursively: 
Suppose that we have defined 
$p_{t+1} (i)$, 
let us define $p_{t+1} (i+1)$ as follows: 
\begin{enumerate}[leftmargin = 2em]
\item 
If $p_{t+1} (i) < p_{t} (i+1)$ and $\chi_t (p_{t} (i+1)) = 1$, we set $p_{t+1} (i+1) := p_{t} (i+1)$. 
\item If $p_{t+1} (i) = p_{t} (i+1)$ or $\chi_t (p_{t} (i+1)) = 0$,  we set $p_{t+1} (i+1) :=  \min\big(p_{t} (i+1) + j_t (p_{t} (i+1)), p_{t} (i+2)\big)$.
\end{enumerate}
In other words, for the update from time $t$ to $t+1$, we run the update procedure from left to right. Each particle wakes up with probability $1 - b_1$. Once a particle has woken, it keeps moving one step to the right with probability $b_2$. If the $i$-th
particle counting from left hits the $(i+1)$-th particle,
then it stops immediately and the $(i+1)$-th 
particle 
moves one step to the right and keeps moving with probability $b_2$.
\end{defin}

We can also define the S6V model with \red{infinitely many} particles. \red{In this situation, at least one of $M, N$ is infinite.} The definition appears in \cite[Section 2]
{aggarwal2020limit}, 
see Section \ref{sec:infinite} for detail. 

Define the state space $\SS:= \{0, 1\}^{\mathbb{Z}}$. We introduce the S6V model occupation process $\eta_t = \{\eta_t (x)\}_{x \in \mathbb{Z}} \in \SS$ such that for every $t \in \mathbb{Z}_{\geq 0}$ and $x \in \mathbb{Z}_{\geq 0}$, 
\begin{equation}\label{eq:particleocc}
\eta_t (x) := \mathbbm{1}_{\{\exists i, p_t(i) = x\}}. 
\end{equation}
In other words, $\eta_t (x)$ describes whether the location $x$ is occupied at time $t$.
Note that the Markov processes $\eta_t$ and $\mathbf{p}_t$ are two equivalent ways of characterizing the S6V model.

The S6V model can be \red{viewed} as a discrete time version of the asymmetric simple exclusion process (ASEP). The ASEP is an interacting particle system on $\mathbb{Z}$ where each lattice \red{point} has either a particle or no particle. Fix $r, \ell > 0$, each particle jumps one step to the right with rate $r$ and jumps one step to the left with rate $\ell$.  The jump is blocked if the destination of the jump has already been occupied. The ASEP is known to be a continuous time limit of the S6V model \cite{borodin2016stochastic, aggarwal2017convergence}. More precisely, if we scale $b_1 = \e \ell, b_2 = \e r$, $t \to \e^{-1} t$ and impose a moving frame \red{of} speed $1$ to the right, under this moving frame, the S6V model converges to the ASEP as $\e \to 0$.

The above paragraph motivates us to define the S6V model under a moving frame of speed $1$.  Let us define a class of shift operators $\{\tau_n\}_{n \in \mathbb{Z}}$ such that $\tau_n: \mathcal{S} \to \mathcal{S}$ and $\tau_n(\eta) (i) := \eta(i+n)$ for arbitrary $n, i \in \mathbb{Z}$. We abbreviate $\tau_1$ as $\tau$. 
\begin{defin}[Shifted S6V model]
\label{def:shifted}
Recall the S6V model $(\eta_t)_{t \in \mathbb{Z}_{\geq 0}}$ from \eqref{eq:particleocc}. We define the shifted S6V model to be a Markov process $(\eta'_t)_{t \in \mathbb{Z}_{\geq 0}}$ such that $\eta'_t = \tau_t(\eta_t)$. It is the S6V model under a moving frame to the right \red{of} speed $1$.
\end{defin}


We seek to find all the stationary distributions for the S6V model and the shifted S6V model. 
We state our results in Section \ref{sec:mainresult}. In Section \ref{sec:extenHS}, we extend our results to the stochastic higher spin six vertex (SHS6V) model.
\subsection{Main results}\label{sec:mainresult}
We identify all the stationary distributions of the S6V model $(\eta_t)_{t \in \mathbb{Z}_{\geq 0}}$.
Let $\mathcal{M}_1 (\mathcal{S})$ denote the space of probability measure\red{s} on $\mathcal{S}$.  
We endow $\mathcal{M}_1(\mathcal{S})$ with the \red{topology of weak convergence of probability measures}.
Let $\II \subseteq \mathcal{M}_1 (\mathcal{S})$ denote the set of stationary distributions of $(\eta_t)_{t \in \mathbb{Z}_{\geq 0}}$. It is straightforward that 
$\II$ is compact and convex. Define the set of extreme point\red{s} of $\mathcal{I}$ to be 
\begin{equation*}
\II_e := \Big\{\nu \in \II: \text{ there does not exist } \nu_1 \neq \nu_2 \in \II \text{ and } \alpha \in (0, 1) \text{ such that } \nu = \alpha \nu_1 + (1-\alpha) \nu_2\Big\}.
\end{equation*}
By Krein–Milman theorem \cite{Krein1940},
 $\II$ equals the closed convex hull of $\II_e$. 
 
Let $\Ber(\rho)$ denote the Bernoulli distribution with mean $\rho$. Define $\mu_\rho := \bigotimes_{\mathbb{Z}} \Ber(\rho)$. We refer $\mu_\rho$ to be the product Bernoulli measures with density $\rho$.
\begin{theorem}\label{thm:st}
$\II_e = \{\mu_\rho\}_{\rho \in [0, 1]}$. 
\end{theorem}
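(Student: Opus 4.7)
The plan is to prove the two inclusions $\{\mu_\rho\}_{\rho \in [0,1]} \subseteq \II_e$ and $\II_e \subseteq \{\mu_\rho\}_{\rho \in [0,1]}$ separately.

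Stationarity of each $\mu_\rho$ rests on a single-vertex identity: if the horizontal and vertical inputs of a vertex are independent $\mathrm{Ber}(\rho)$, then so are the horizontal and vertical outputs, as can be verified directly from the six weights in Figure \ref{fig:vertexconfig}. Sweeping this property left-to-right across a row (with the leftmost horizontal input taken to be i.i.d.\ $\mathrm{Ber}(\rho)$ in the infinite setting) shows that one application of the S6V update rule sends $\mu_\rho$ to $\mu_\rho$. Extremality of $\mu_\rho$ in $\II$ then follows from its ergodicity under the spatial shift $\tau$, together with the fact that $\tau$ commutes with the dynamics.

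For the harder inclusion, fix $\nu \in \II_e$. The strategy has three steps. First, show that $\nu$ is translation invariant: since $\tau_*$ commutes with the dynamics, $\tau_* \nu \in \II_e$, and an averaging plus tail-triviality argument adapted from Liggett's treatment of ASEP forces $\tau_* \nu = \nu$. Second, couple $\eta_t$ started from $\nu$ with $\tilde{\eta}_t$ started from $\mu_\rho$, where $\rho := \int \eta(0)\,d\nu$, using the monotone basic coupling from \cite{aggarwal2020limit}; the joint law is stationary and translation invariant, and both marginals have density $\rho$, so the discrepancies $\eta_t(x) - \tilde{\eta}_t(x) \in \{-1,0,+1\}$ produce equal densities of $+1$-discrepancies (``second-class particles'') and $-1$-discrepancies (``second-class holes''). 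Third, analyze the current of these second-class objects through the bond $(0,1)$: expressing it two ways—once via stationarity and translation invariance, once via the explicit coupled update rule—yields an identity which rules out a positive density of discrepancies. Hence the two marginals agree and $\nu = \mu_\rho$.

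The main obstacle is the third step. Because the row update in discrete-time S6V is sequential in $x$, the current through $(0,1)$ during one time step depends on the whole history of the row's update, and extracting a clean, local, translation-invariant identity requires the careful bookkeeping supplied by the basic coupling of \cite{aggarwal2020limit}. This is precisely why the coupling and the current analysis are highlighted in the abstract. The first step (translation invariance) also needs some care in the infinite-particle setting, but it follows familiar lines.
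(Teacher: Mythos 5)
There is a genuine gap, and it sits exactly where the real difficulty of the theorem lies: your Step 1. You assert that translation invariance of an extremal $\nu \in \II_e$ follows from ``an averaging plus tail-triviality argument adapted from Liggett's treatment of ASEP.'' No such argument exists, and it cannot exist, because the analogous statement is \emph{false} for ASEP itself: when $\ell \neq r$, ASEP has extremal stationary blocking measures that are not translation invariant, and the same is true for the shifted S6V model (Theorem \ref{thm:s6vmoving}). Tail-triviality and the fact that $\tau$ commutes with the dynamics only give you that $\nu \circ \tau^{-1}$ is again extremal stationary; they do not force $\nu \circ \tau^{-1} = \nu$. The paper's proof of translation invariance is the bulk of the work and has two genuinely nontrivial components. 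First (Sections \ref{sec:coupling}), one couples $\nu$ with $\nu \circ \tau^{-1}$ via $\theta_\nu = \nu\circ(\Id,\tau^{-1})$ and shows, by a quantitative discrepancy count (Lemmas \ref{lem:discrepanciesnb1} and \ref{lem:discrepanciesnb2}, Proposition \ref{prop:liggettineq}, following Bramson--Liggett--Mountford), that any Ces\`aro limit of the coupled law is supported on ordered pairs; extremality then yields $\nu > \nu\circ\tau^{-1}$, $\nu < \nu\circ\tau^{-1}$, or $\nu = \nu\circ\tau^{-1}$ (Corollary \ref{cor:shift}). Second (Section \ref{sec:current}), the strict alternatives are excluded by the monotonicity of the expected current $\mathbb{E}^\eta[K_y]$ in the configuration to the left of $y$ (Lemma \ref{lem:current}) combined with line conservation $\eta_0(y) + K_{y-1} = \eta_1(y) + K_y$: a strict ordering between $\nu$ and its shift would force $\mathbb{E}^\nu[\eta_0(y)] \neq \mathbb{E}^\nu[\eta_1(y)]$, contradicting stationarity. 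Your proposal contains no substitute for either component, so the inclusion $\II_e \subseteq \{\mu_\rho\}$ is not established. (Your Steps 2--3, granting translation invariance, are close in spirit to the paper's Proposition \ref{prop:coramol} and Corollary \ref{cor:agg} and could be made to work, though the paper avoids matching densities by instead comparing $\nu$ with $\mu_\rho$ for every $\rho$ and using continuity of $\rho\mapsto\mu_\rho$.)

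A smaller but real error: the single-vertex identity you invoke for stationarity of $\mu_\rho$ is misstated. It is not true that independent $\mathrm{Ber}(\rho)$ inputs on both the vertical and horizontal edges produce $\mathrm{Ber}(\rho)$ outputs; the horizontal edge must carry $\mathrm{Ber}(\zeta)$ with $\zeta$ determined by $(1-b_1)\rho(1-\zeta) = (1-b_2)\zeta(1-\rho)$ (Lemma \ref{lem:lcs}), and one first shows $K_y \sim \mathrm{Ber}(\zeta)$ independent of the configuration to the right before sweeping across the row (Lemma \ref{lem:prodstat}). With both inputs $\mathrm{Ber}(\rho)$ the output laws would not match unless $b_1 = b_2$.
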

\begin{remark}
In \cite[Theorem 3.6]{aggarwal2020limit}, the author proved that the set of translation invariant stationary distribution\red{s} of the S6V model is given by the closed convex hull of product Bernoulli measures. Our result says that there does not exist a stationary distribution for the S6V model which is not translation invariant. 
\end{remark}
We proceed to find the stationary distribution\red{s} of the shifted S6V model.
Define $q := b_1/b_2$ and assume $q \neq 1$. 
Define the inhomogeneous product measure  
\begin{equation}\label{eq:prodinhomo}
\mu_{*} := \bigotimes_{k \in \mathbb{Z}} \text{Ber}\Big(\frac{q^{-k}}{1 + q^{-k}}\Big). 
\end{equation}
Since $q \neq 1$, we have either $q \in (0, 1)$ or $q > 1$. Assume without loss of generality that $q > 1$. Using the Borel-Cantelli lemma, it is straightforward to verify  that under probability measure $\mu_{*}$, almost surely there exist two random numbers $-M <  0 <  N$ such that $\eta(i) = 1$ for $i \leq -M$ and $\eta(i) = 0$ for $i \geq N$. Let 
\begin{equation}\label{eq:setA}
A := \Big\{\eta: \sum_{i = -\infty}^{0} \big(1 - \eta(i)\big) + \sum_{i = 1}^\infty \eta(i) < \infty\Big\}.
\end{equation}
By the previous argument, we have $\mu_{*}(A) = 1.$
Consider a partition of the set $A = \cup_{n = -\infty}^\infty A_n$ where 
\begin{equation}\label{eq:setAn}
A_n  := \Big\{\eta: \sum_{i = -\infty}^{0} \big(1 - \eta(i)\big) + n=  \sum_{i = 1}^\infty \eta(i) < \infty\Big\}.
\end{equation}
Define the probability measure $\mu_{(n)}$ to be the projection of 
$\red{\mu_*}$
onto $A_n$, i.e. $\mu_{(n)}$ is supported on $A_n$ and for any measurable subset of $B \subseteq A_n$, $\mu_{(n)} (B ) := \frac{\mu_{*} (B)}{\mu_{*} (A_n)}.$ 
\red{It is straightforward that the sets $A_n$ for different $n$ are related by translation $\tau A_n =  A_{n-1}$. We also know that (see equation \eqref{eq:shifting}) for every $n \in \mathbb{Z}$, $\mu_{(n)} \circ \tau^{-1} = \mu_{(n-1)}$.}
We call the probability measures that are supported on $A$ \emph{blocking measures}. In particular, $\red{\{\mu_{(n)}\}_{n \in \mathbb{Z}}}$ are   example\red{s} of  blocking measure\red{s}.

Let $\mathfrak{I}$ be the set of stationary distributions for the shifted S6V model $(\eta_t')_{t \in \mathbb{Z}_{\geq 0}}$. The following theorem gives a complete characterization of $\mathfrak{I}$.

\begin{theorem}\label{thm:s6vmoving}
If $q \neq 1$, we have $\mfI_e = \{\mu_\rho\}_{\rho \in [0, 1]} \cup \{\mu_{(n)}\}_{n \in \mathbb{Z}}$.
\end{theorem}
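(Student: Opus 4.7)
The plan is to establish both containments of $\mfI_e=\{\mu_\rho\}_{\rho\in[0,1]}\cup\{\mu_{(n)}\}_{n\in\ZZ}$. For the direction $\supseteq$, I first verify stationarity. Using the translation covariance of the S6V transition kernel $P$, the stationarity condition for the shifted dynamics becomes $\nu P=\tau_*^{-1}\nu$, where $\tau_*$ denotes the push-forward under $\tau$. For $\mu_\rho$ this is immediate from translation invariance and Theorem \ref{thm:st}. For $\mu_{(n)}$, the relation $\tau_*\mu_{(n)}=\mu_{(n-1)}$ given in the excerpt rewrites the desired identity as $\mu_{(n)}P=\mu_{(n+1)}$, i.e., the intertwining $\mu_*P=\tau_*^{-1}\mu_*$ on the infinite-volume blocking measure. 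I would verify this by a direct computation from Definition \ref{def:s6v}, exploiting that the specific choice $\rho_k=q^{-k}/(1+q^{-k})$ produces the log-odds relation $\rho_k/(1-\rho_k)=q^{-k}$ whose step ratio $q$ matches the asymmetry $q=b_1/b_2$ of the dynamics. Extremality of each $\mu_\rho$ and each $\mu_{(n)}$ within $\mfI$ then follows from ergodicity of the shifted S6V dynamics under these measures: for $\mu_\rho$ via the unshifted ergodicity (a consequence of Theorem \ref{thm:st}) combined with commutation of $P$ and $\tau$, and for $\mu_{(n)}$ via irreducibility of the shifted dynamics on the invariant set $A_n$.

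For the direction $\subseteq$, let $\nu\in\mfI_e$. If $\nu$ is translation invariant, then $\tau_*\nu=\nu$ and stationarity for the shifted dynamics reduces to $\nu P=\nu$, the stationarity condition of the unshifted S6V. Theorem \ref{thm:st} gives $\nu\in\overline{\mathrm{conv}}\{\mu_\rho\}_{\rho\in[0,1]}$, and extremality of $\nu$ in $\mfI$ forces $\nu=\mu_\rho$ for some $\rho$. This case is essentially free given Theorem \ref{thm:st}. The bulk of the work is the non-translation invariant case. Here I would combine the coupling of S6V models from \cite{aggarwal2020limit} with an analysis of the current in the moving frame. Assume without loss of generality $q>1$. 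Define the signed current $J_t$ counting the net number of particles crossing a reference bond in the shifted frame up to time $t$. Stationarity imposes that the expected current rate is constant in the reference bond, and combining this with the shift contribution yields a constraint on the asymptotic densities $\rho_\pm$ of $\nu$ at $\pm\infty$; the only non-translation-invariant solution is the step profile $\rho_-=1$, $\rho_+=0$, forcing $\nu(A)=1$. Since $A=\bigsqcup_n A_n$ is a partition into invariant sets, extremality yields $\nu(A_n)=1$ for a unique $n\in\ZZ$. Finally, a discrepancy-vanishing argument via the monotone coupling of \cite{aggarwal2020limit} between $\nu$ and $\mu_{(n)}$, both supported on $A_n$ and stationary, forces $\nu=\mu_{(n)}$.

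The main obstacle is the non-translation invariant case, specifically showing that stationarity forces the asymptotic density profile to be the step function and then matching $\nu$ with the correct $\mu_{(n)}$. The current analysis must rule out any ``intermediate'' profile with $0<\rho_+<\rho_-<1$, and the coupling argument must work against the discrete-time dynamics and the geometric-length jumps governed by $b_2$. A secondary but nontrivial obstacle is the intertwining identity $\mu_*P=\tau_*^{-1}\mu_*$ used to prove stationarity of $\mu_{(n)}$. Both rely crucially on the S6V coupling introduced in \cite{aggarwal2020limit} and on careful current estimates; I expect the current analysis and the subsequent coupling-based matching on $A_n$ to constitute the most technical portion of the proof.
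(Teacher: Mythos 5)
Your overall architecture (stationarity of $\mu_*$ via a local intertwining computation, reduction of the translation invariant case to Theorem \ref{thm:st}, trichotomy from the coupling, current analysis to pin down the asymptotic densities) matches the paper's strategy, and your first half is essentially the paper's Section \ref{sec:blk} (the paper proves $\mu_* \overline{S}_1 = \mu_*\circ\tau^{-1}$ not by a one-shot direct computation but by iterating a local pseudo-stationarity lemma together with a contraction estimate identifying the law of the current $K_y$; this is a real step, not a formality). However, there is a genuine gap in your non-translation-invariant case: from the conclusion that the asymptotic densities satisfy $\lim_{n\to-\infty}\nu(\eta(n)=1)=1$ and $\lim_{n\to\infty}\nu(\eta(n)=1)=0$ you assert that ``$\nu(A)=1$.'' This does not follow. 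Membership in $A$ requires $\sum_{i\le 0}(1-\eta(i))+\sum_{i\ge 1}\eta(i)<\infty$ almost surely, and pointwise convergence of the one-site marginals to $1$ and $0$ gives no summability and no Borel--Cantelli control for a general stationary $\nu$; a priori the densities could decay to their limits arbitrarily slowly, leaving infinitely many discrepancies from the step profile. Since your final identification ($\nu=\mu_{(n)}$ by uniqueness of the stationary law on the countable irreducible class $A_n$, or by a coupling on $A_n$) is entirely conditional on $\nu(A_n)=1$, the argument does not close.

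The paper circumvents exactly this difficulty by a different final step. It first proves (Lemma \ref{lem:ordering}) that any two distinct non-translation-invariant extremal stationary measures are stochastically comparable: because both have the same step asymptotics, the expected number of discrepancies of the product coupling in $[-Mn,Mn]$ at time $0$ is $o(n)$, and the machinery of Proposition \ref{prop:coupled} then forces a total order. Combining this with $\mu_{(n)}>\mu_{(n-1)}$, $\mu_{(n)}\to\delta_{\pm\infty}$, a hypothetical $\nu\notin\{\mu_{(n)}\}_{n\in\mathbb{Z}}$ would be strictly sandwiched, $\mu_{(k-1)}<\nu<\mu_{(k)}$, which forces the expected particle counts of $\mu_{(k)}$ and $\mu_{(k-1)}$ over $[-n,n]$ to differ by at least $2$ in the limit; but $\mu_{(k-1)}=\mu_{(k)}\circ\tau^{-1}$ shows the difference is at most $1$. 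This counting contradiction identifies $\nu$ with some $\mu_{(n)}$ without ever needing $\nu(A)=1$ a priori. To repair your proposal you would need either to supply a proof that stationarity forces $\nu(A)=1$ (which is not obviously available) or to substitute an ordering-plus-counting argument of the above type.
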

\begin{remark}
Note that $\mfI_e$ is equal to the \red{set of} extremal stationary distribution\red{s} of ASEP with left jump rate $\ell$ and right jump rate $r$ such that $\frac{\ell}{r} = q$, see 
\cite[Theorem 1.4]{liggett1976coupling}. 
\end{remark}
\begin{remark}
If $q = 1$, $\mu_*$ is \red{equal to $\mu_{\frac{1}{2}}$.}  
The method of our proof \red{of Theorem \ref{thm:s6vmoving}} would imply that $\mfI_e = \{\mu_\rho\}_{\rho \in [0, 1]}$.
\end{remark}

\subsection{Extension to the stochastic higher spin six vertex (SHS6V) model} 
\label{sec:extenHS}
For the S6V model, as shown in Figure \ref{fig:vertexconfig}, 
there can be at most one line flowing in the horizontal and vertical direction. The SHS6V model \red{introduced in} \cite{corwin2016stochastic, BP18} removes this restriction and allows the vertex configurations to have up to $I$ lines in the vertical direction and up to $J$ lines in the horizontal directions, where $I, J$ are fixed positive integers.  

The SHS6V model is a four-parameter family of quantum integrable system \red{and} has been intensely studied in recent years\red{. See} \cite{corwin2016stochastic, kuniba2016stochastic, corwin2017kpz, Agg18, BP18, borodin2018inhomogeneous, borodin2018coloured, kuan2018algebraic,    borodin2020observables, lin2020kpz, lin2020stochastic, imamura2020stationary,  aggarwal2021deformed} \red{for reference.} To define the model, we \red{fix $I, J \in \mathbb{Z}_{\geq 1}$, $q, \alpha \in \mathbb{R}$}  and introduce the matrix $\L_\alpha^{I, J}$ with row and column both indexed by $(i_1, j_1), (i_2, j_2) \in \{0, 1, \dots, I\} \times \{0, 1, \dots, J\}$. The $i_1, j_1, i_2, j_2$ indicate the number of lines on the bottom, left, top and right of a vertex. Similar to the vertex configurations of the S6V model, we view the lines on the left and bottom of the vertex as input lines and view the lines to the right and above as output lines, see Figure \ref{fig:higherspin}. We assign the vertex configuration indexed by $i_1, j_1, i_2, j_2$ with weight $\L_{\alpha}^{I, J} (i_1, j_1; i_2, j_2)$.
\begin{figure}[ht]
\centering
\includegraphics[scale = 0.7]{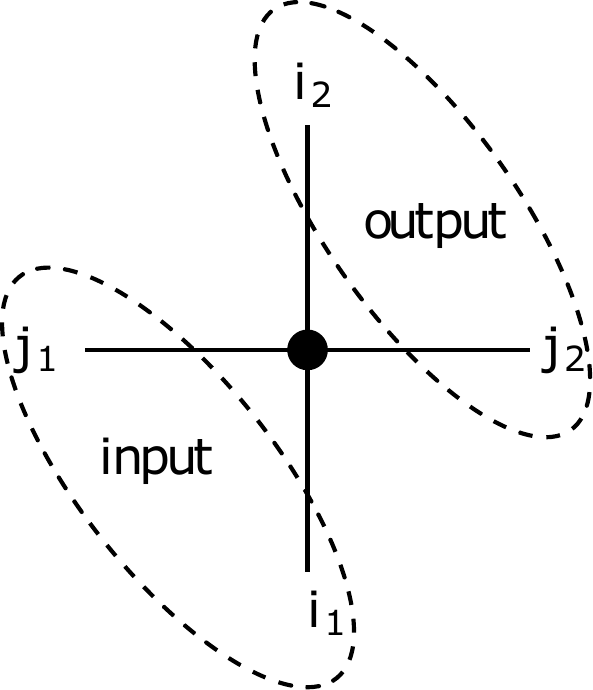}
\caption{The vertex configuration is indexed by four numbers $i_1, j_1, i_2, j_2$ and has weight $\L^{I, J}_\alpha (i_1, j_1; i_2, j_2)$. The vertex absorbs $i_1$ lines from the bottom, $j_1$ lines from the left and produces $i_2$ lines to above and $j_2$ lines to the right.}
\label{fig:higherspin}
\end{figure}
We proceed to specify the entries of the matrix $\L_{\alpha}^{I, J}$. \red{We d}efine the $q$-Pochhammer symbol. \red{For all $b, q \in \mathbb{R}$,}
\begin{equation*}
(b; q)_n := 
\begin{cases}
\prod_{k = 0}^{n-1} (1 - bq^k),  &\qquad n > 0,\\
1  &\qquad n = 0,\\
\prod_{k = 0}^{-n-1} (1-\red{b} q^{n+k})^{-1} &\qquad n < 0.
\end{cases}
\end{equation*}
\red{By \cite[Theorem 3.15]{corwin2016stochastic}},  \red{ t}he entries of $\L_\alpha^{I, J}$ are given by  
\begin{align*}
\L_{\alpha}^{I, J}(i_1, j_1; i_2, j_2) := &\mathbbm{1}_{\{i_1 + j_1 = i_2 + j_2\}} q^{\frac{2j_1 - j_1^2}{4} - \frac{2j_2 - j_2^2}{4} + \frac{i_2^2 + i_1^2}{4} + \frac{i_2 (j_2 - 1) + i_1 j_1}{2}}\\
&\times \frac{\nu^{j_1 - i_2} \alpha^{j_2 - j_1 + i_2} (-\alpha\nu^{-1}; q)_{j_2 - i_1}}{(q;q)_{i_2} (-\alpha; q)_{i_2 + j_2} (q^{J+1 - j_1 }; q)_{j_1 - j_2}}  { }_{4} \overline{\phi}_3 \bigg(\begin{matrix}
q^{-i_2};q^{-i_1}, -\alpha q^J, -q\nu \alpha^{-1}\\ \nu, q^{1 + j_2 - i_1}, q^{J + 1 - i_2 - j_2} 
\end{matrix} \bigg| q, q\bigg),
\end{align*}
where $\nu = q^{-I}$ and ${}_4 \bar{\phi}_3$ is the regularized terminating basic hyper-geometric series defined by 
\begin{align*}
_{r+1} \overline{\phi}_r \bigg(\begin{matrix}
q^{-n}, a_1, \dots, a_r\\ b_{\red{1}}, \dots, b_r 
\end{matrix}
\bigg|q, z \bigg) &= \sum_{k=0}^n z^k \frac{(q^{-n}; q)_k}{(q; q)_k} \prod_{i=1}^r (a_i; q)_k (b_i q^k; q)_{n-k}.
\end{align*}  
\red{Note that the dependence of $\mathsf{L}_{\alpha}^{I, J}$ on $q$ is hidden for brevity.} It turns out the matrix $\mathsf{L}_{\alpha}^{I, J}$ is \emph{stochastic} \red{if} either one of the following is satisfied. 
\begin{enumerate}[leftmargin = 2em]
\item\label{item:condition1} $q \in [0, 1)$ and $\alpha < - q^{-I- J + 1}$.
\item \label{item:condition2} $q > 1$ and $-q^{-I-J+1} < \alpha < 0$. 
\end{enumerate}
\red{When \eqref{item:condition1} or \eqref{item:condition2} holds, the stochasticity of the matrix is proved in \cite[Propsition 2.3]{corwin2016stochastic} and \cite[Corollary 1.4]{lin2020stochastic}. \red{Moreover, we have that the matrix $\L_{\alpha}^{I, J}$ is conservative in the sense that the entry $\L_{\alpha}^{I, J} (i_1, j_1; i_2, j_2)$ is zero if $i_1 + j_1 \neq i_2 + j_2$.} Although the following will not be the focus of this paper, we remark that \cite{corwin2016stochastic, BP18} also introduced the SHS6V model with $I, J$ being non-integers by analytic continuation. In this case, the row and column of the matrix $\L^{I, J}_{\alpha}$ is indexed by $(i_1, j_1), (i_2, j_2) \in \mathbb{Z}_{\geq 0} \times \mathbb{Z}_{\geq 0}$.}

For the rest of the section, we assume that $\alpha$ and $q$ satisfy either \eqref{item:condition1} or \eqref{item:condition2}.
Define $\GG := \{0, 1, \dots, I\}^{\mathbb{Z}}$. In the following, we define the SHS6V model as an interacting particle system \red{on $\mathbb{Z}$.} \red{The system} allows at most $I$ \red{number of} particles \red{to occupy} each \red{integer} lattice \red{point}, with the assumption that the total number of particles is finite. 
\begin{defin}[SHS6V model with \red{finitely many particles}]\label{def:fused}
The SHS6V model \red{is} 
a discrete time-homogeneous Markov process $g_t = \{g_t (x), x \in \mathbb{Z}\} \in \GG$, where $g_t (x)$ denotes the number of particles located at $x$ at time $t$. It suffices to specify the update rule from $g_t$ to $g_{t+1}$.
Assume that the leftmost particle in the configuration $g$ is at $x$, i.e. $g_t(x) > 0$ and $g_t(z) = 0$ for all $z < x$. Starting from $x$, we update $g_t(x)$ to $g_{t+1}(x)$ by setting $h_t(x) = 0$ and randomly choosing $g_{t+1}(x)$ according to the probability measure $\L^{I, J}_\alpha (g_t (x), h_t (x) = 0; g_{t+1} (x), h_t (x+1))$ where $h_t (x+1) := g_t (x) - g_{t+1} (x)$. Proceeding sequentially, we update $g_t (x+1)$ to $g_{t+1} (x+1)$ according to the probability $\L^{I, J}_\alpha (g_{t} (x+1), h_t (x+1); g_{t+1}(x+1), h_{t} (x+2))$ where $h_{t} (x+2) := g_{t} (x+1) + h_t (x+1) - g_{t+1} (x+1)$. Continuing for \red{the update of} $g_t (x+2), g_t (x+3), \dots$, we have defined the update rule from $g_t$ to $g_{t+1}$, see Figure \ref{fig:jmodel} for visualization of the update procedure. We call the discrete \textbf{time-homogeneous} Markov process  $g_t \in \mathbb{G}$ with the update rule defined above
	\textbf{the 
SHS6V model}.
\end{defin}
Similar to the S6V model, we can extend the definition of the SHS6V model to allow infinite number of particles, see the discussion after Proposition \ref{prop:fusion}. 
\begin{remark}
\red{One readily} checks that when $I = J = 1$, the vertex configurations that have non-zero weight reduce to the vertex configurations in Figure \ref{fig:vertexconfig}, with $b_1  = \red{\L_{\alpha}^{1, 1} (1, 0; 1, 0) = } \frac{1+ \alpha q}{1 + \alpha}$ and $b_2 = \red{\L_{\alpha}^{1, 1} (0, 1; 0, 1) = } \frac{\alpha + q^{-1}}{1 + \alpha}$. Hence, 
the SHS6V model $(g_t)_{t \in \mathbb{Z}_{\geq 0}}$ reduces to the S6V model $(\eta_t)_{t \in \mathbb{Z}_{\geq 0}}$.
\end{remark}
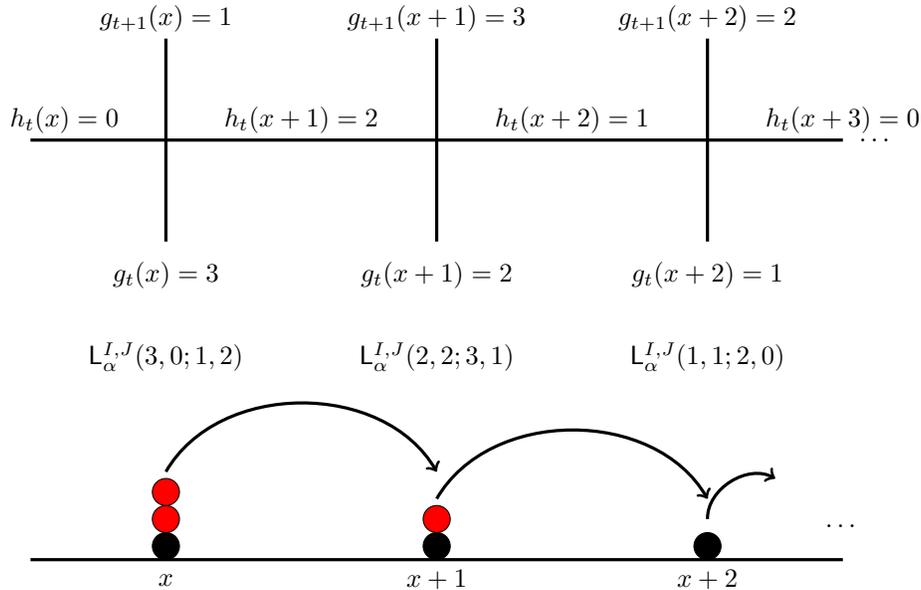
\begin{figure}[ht]
	\begin{tikzpicture}[scale = 0.9]
	\draw[very thick] (0, 4.5) -- (12, 4.5);
	\draw[very thick] (2, 3) -- (2, 6);
	\draw[very thick] (6, 3) -- (6, 6);
	\draw[very thick] (10, 3) -- (10, 6);
	\node at (2, 2.5)  {$g_t (x) = 3$};
	\node at (6, 2.5)  {$g_t (x+1) = 2$};
	\node at (10, 2.5)  {$g_t (x+2) = 1$};
	\node at (2, 6.3)  {$g_{t+1} (x) = 1$};
	\node at (6, 6.3)  {$g_{t+1} (x+1) = 3$};
	\node at (10, 6.3)  {$g_{t+1} (x+2) =2 $};
	\node at (0.5, 4.8) {$h_t (x) = 0$};
	\node at (4, 4.8) {$h_t (x+1) = 2$};
	\node at (8, 4.8) {$h_t (x+2) = 1$};
	\node at (12, 4.8) {$h_t (x+3) = 0$};
	\node at (12.5, 4.5) {\dots};
	\begin{scope}[yshift = -2em]
	\draw[very thick] (0, -1) -- (12, -1);
	
	\draw[fill]  (2, -0.8) circle (0.2);
	\draw[fill = red]  (2, -0.4) circle (0.2);
	\draw[fill = red]  (2, 0) circle (0.2);
	\draw[fill]  (6, -0.8) circle (0.2);
	
	\draw[fill = red]  (6, -0.4) circle (0.2);
	\draw[fill]  (10, -0.8) circle (0.2);
	\node at (2, -1.3) {$x$};
	\node at (6, -1.3) {$x+1$};
	\node at (10, -1.3) {$x+2$};
	
	\node at (12, -0.5) {\dots};
	\draw[bend left = 60, very thick, ->]  (2, 0.3) to (6, 0.3);
	\draw[bend left = 60, very thick, ->]  (6, -0.1) to (10, -0.1);
	\draw[bend left = 60, very thick, ->]  (10, -0.4) to (11, 0.2);
	\end{scope}
	\node at (2, 1.3) {$\L_\alpha^{I, J}(3, 0; 1, 2)$};
	\node at (6, 1.3) {$\L_\alpha^{I, J}(2, 2; 3, 1)$};
	\node at (10, 1.3) {$\L_\alpha^{I, J}(1, 1; 2, 0)$};
	
	\end{tikzpicture}
	\caption{A visualization of the sequential update rule for the SHS6V model in Definition \ref{def:fused}.  Assuming that $x$ is the location of the leftmost particle, we update sequentially for positions $x, x+1, x+2, \dots$ according to the stochastic matrix $\L^{I, J}_\alpha$. The red particles in the picture above will move one step to the right.}
	\label{fig:jmodel}
\end{figure}
We proceed to study the stationary distributions of the SHS6V model. 
We say that a random variable $X$ follows the \emph{$q$-negative Binomial distribution} if
\begin{equation}\label{eq:qNB}
\mathbb{P}(X = n) = p^n \frac{(b; q)_n (p; q)_\infty}{(q; q)_n (pb; q)_\infty}, \qquad \text{for all } n \in \mathbb{Z}_{\geq 0}. 
\end{equation}
for some parameters $p, b$. We use the notation $X \sim \qNB(b, p)$ for the above probability distribution.
When $p < 0$ and $b = q^{-K}$ for some positive integer $K$, $X$ only takes the value in the set $\{0, \dots, K\}$ since $\mathbb{P}(X = n) = 0$ for any $n > K$.

 For $\rho \in [0, 1]$, let $\mu^\HS_\rho$ denote the product $q$-negative binomial distribution
\begin{equation}\label{eq:qnegative}
\mu^\HS_\rho := \bigotimes_{x \in \mathbb{Z}} q\text{NB}\Big(q^{-I}, \frac{-q^{I} \rho}{1 - \rho}\Big).
\end{equation}  
\red{Note that $\mu_{\rho}^{\text{HS}}$ is equal to $\mu_\rho$ when $I = 1$.}
Let $\IIf$ denote the set of stationary distributions for the SHS6V model $(g_t)_{t \in \mathbb{Z}_{\geq 0}}$.
\begin{theorem}\label{thm:st1}
We have $\IIf_{e} = \{\mu^{\text{HS}}_\rho\}_{\rho \in [0, 1]}$.
\end{theorem}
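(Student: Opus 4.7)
The plan is to deduce Theorem \ref{thm:st1} from the S6V classification Theorem \ref{thm:st} via the fusion procedure formalized in Proposition \ref{prop:fusion}. Fusion realizes the SHS6V model with spin parameters $(I, J)$ as a suitable aggregation of an underlying S6V process, and at the level of measures it should push forward the product Bernoulli measures $\mu_\rho$ to the product $q$-negative binomial measures $\mu_\rho^{\text{HS}}$ defined in \eqref{eq:qnegative}. We intend to transfer both the stationarity and the classification of extremal measures across this correspondence.

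First I would verify the easy inclusion $\{\mu_\rho^{\text{HS}}\}_{\rho \in [0,1]} \subseteq \IIf_e$. Stationarity of each $\mu_\rho^{\text{HS}}$ should follow by pushing the stationarity of $\mu_\rho$ through the fusion projection, or alternatively by a direct computation using the $q$-negative binomial identity applied to the row sums of $\L_\alpha^{I, J}$. Extremality holds because $\mu_\rho^{\text{HS}}$ is a product measure, hence ergodic under spatial translations, and the SHS6V dynamics is translation invariant, so $\mu_\rho^{\text{HS}}$ cannot be a nontrivial convex combination of two elements of $\IIf$.

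The reverse inclusion $\IIf_e \subseteq \{\mu_\rho^{\text{HS}}\}_{\rho \in [0,1]}$ is the main content. Given $\nu \in \IIf_e$, I would construct a stationary S6V measure $\hat\nu \in \II$ whose fusion projection equals $\nu$. Granted such a lift, Theorem \ref{thm:st} yields a decomposition $\hat\nu = \int_0^1 \mu_\rho \, \lambda(d\rho)$ for some probability measure $\lambda$ on $[0, 1]$; projecting under fusion produces $\nu = \int_0^1 \mu_\rho^{\text{HS}} \, \lambda(d\rho)$, and extremality of $\nu$ forces $\lambda$ to be a point mass, yielding $\nu = \mu_\rho^{\text{HS}}$ for some $\rho \in [0, 1]$.

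The hard part is constructing the stationary lift $\hat\nu$. A natural plan is a Cesaro/compactness argument: pick any extension $\nu_0 \in \MM_1(\SS)$ of $\nu$, run the S6V process from $\nu_0$, and form Cesaro averages of the time marginals; by compactness of $\MM_1(\SS)$ in the weak topology some subsequential limit $\hat\nu$ exists and is S6V-stationary, and since the fusion projection intertwines the two dynamics and $\nu$ is already SHS6V-stationary, the projection of $\hat\nu$ remains $\nu$. Making this rigorous in the infinite-particle extension of S6V, and carefully handling the fact that fusion typically involves a $q$-exchangeable randomization within blocks rather than a purely deterministic aggregation (so that the "lift" may need to be formulated as a joint coupling of measures on $\SS$ and $\GG$ instead of a literal projection), is where the bulk of the technical work will lie.
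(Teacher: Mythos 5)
Your overall architecture (fuse/unfuse, transfer stationarity and extremality across the projection $\mathbf{f}_I$, use $q$-exchangeable randomization $\bLambda'_I$ within blocks) matches the paper's, and your lift is actually easier than you fear: for $\nu\in\IIf$ the measure $\bLambda'_I\nu$ is automatically stationary for the relevant unfused chain by Proposition \ref{prop:fusion}, with no Ces\`aro argument needed (this is Lemma \ref{lem:statrelation} in the paper). But there is a genuine gap at the heart of your reverse inclusion: the process obtained by unfusing the SHS6V model is \emph{not} the homogeneous S6V model of Theorem \ref{thm:st}. It is a space-time inhomogeneous S6V model whose parameters $b_1(x,t),b_2(x,t)$ defined in \eqref{eq:b1b2} are periodic with period $I$ in space and $J$ in time (the spectral parameters progress geometrically as $\alpha q^{i+j-2}$ inside each $I\times J$ block). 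Consequently Theorem \ref{thm:st} simply does not apply to the lifted measure $\hat\nu$: the homogeneous $\mu_\rho$ are in general not even stationary for the unfused dynamics, and the measures that do project to $\mu_\rho^{\HS}$ are the inhomogeneous products $\mubar_\rho$ of \eqref{eq:inhomoprod}, whose single-site densities vary with $k$ modulo $I$. Your step ``Theorem \ref{thm:st} yields $\hat\nu=\int_0^1\mu_\rho\,\lambda(d\rho)$'' is therefore false except when $I=J=1$.

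What the paper must do instead — and what your proposal omits — is re-run the entire classification machinery for the inhomogeneous model: a higher-rank coupling with the modified laws of $\chi_t^i(x), j_t^i(x)$, a discrepancy-annihilation argument, and a current-monotonicity argument, all carried out with the $I$-step shift $\tau_I$ replacing $\tau$ (since only $\tau_I$ preserves the dynamics). This is Proposition \ref{prop:extremeinhomo}, which establishes $\Ibar_e=\{\mubar_\rho\}_{\rho\in[0,1]}$ for the \emph{$J$-step} chain $(\overline{\eta}_{Jt})$ — a further point you do not address, since stationarity for the fused model corresponds to stationarity of the $J$-step semigroup, whose invariant measures could a priori strictly contain those of the one-step semigroup. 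Only after that does Lemma \ref{lem:IMS} (summing $I$ independent Bernoullis with geometrically related means gives a $q$-negative binomial) convert $\mubar_\rho\circ\mathbf{f}_I^{-1}$ into $\mu_\rho^{\HS}$. So the reduction you propose does not shortcut the work; the bulk of the proof is precisely the inhomogeneous analogue of Theorem \ref{thm:st} that you assumed could be quoted.
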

\red{We slightly abuse the notation and let $\tau_n$ now denote the map from $\mathcal{G}$ to $\mathcal{G}$ such that we have $\tau_n g(i) = g(i+n)$ for any $g \in \GG$ and $i \in \mathbb{Z}$. We set $\tau = \tau_1$ for abbreviation.}
When $I = J$, we consider \red{the shifted SHS6V model $(g'_t)_{t \in \mathbb{Z}_{\geq 0}} = (\tau_t (g_t))_{t \in \mathbb{Z}_{\geq 0}}$, which is obtained by observing} the SHS6V model under a moving frame \red{of} speed $1$. Let $\mfIHS$ denote the set of stationary distribution\red{s} for shifted SHS6V model. 
We define the blocking measures for the shifted SHS6V model. Set
\begin{equation}\label{eq:mu*HS}
\mu_{*}^{\text{HS}} := \bigotimes_{k \in \mathbb{Z}} q\text{NB}(q^{-I}, -q^{\red{-kI+1}}).
\end{equation}
\red{Note that $\mu_*^{\text{HS}}$ is equal to $\mu_*$ when $I = 1$.} When $q >1$, let 
\begin{equation*}
G := \Big\{\eta: \sum_{i = -\infty}^{0} \big(I - g(i)\big) + \sum_{i = 1}^\infty g(i) < \infty\Big\}.
\end{equation*}
It is clear that $\mu_*^\HS$ is supported on $G$. Consider a partition of the set $G = \cup_{n = -\infty}^\infty G_n$, where 
\begin{equation*}
G_n  := \Big\{\eta: n + \sum_{i = -\infty}^{0} \big(I - g(i)\big) = \sum_{i = 1}^\infty g(i) < \infty\Big\}.
\end{equation*}
Note that $G_n$ are irreducible subspaces for the SHS6V model, we define $\mu^{\HS}_{(n)}$ to be the projection of $\mu_{*}^{\HS}$ onto $G_n$. \red{Note that $\tau G_n = G_{n - I}$. One can also verify that $\mu^{\text{HS}}_{(n)} \circ \tau^{-1} = \mu^{\text{HS}}_{(n - I)}$ for every $n \in \mathbb{Z}$.} 
\begin{theorem}\label{thm:blockingHS}
$\mfIHS$ contains \text{the closed convex hull of } $\big(\{\mu_{\rho}^{\text{HS}}\}_{\rho \in [0, 1]} \cup \{\mu_{(n)}^{\text{HS}}\}_{n \in \mathbb{Z}}\big)$. 
\end{theorem}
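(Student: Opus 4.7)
Since $\mathfrak{I}^{\text{HS}}$ is closed and convex, it suffices to prove that each $\mu_\rho^{\text{HS}}$ with $\rho \in [0,1]$ and each $\mu_{(n)}^{\text{HS}}$ with $n \in \mathbb{Z}$ is stationary for the shifted SHS6V model; the containment of the closed convex hull then follows automatically.

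The homogeneous case is short. By Theorem \ref{thm:st1}, $\mu_\rho^{\text{HS}} P_{\text{SHS6V}} = \mu_\rho^{\text{HS}}$. Translation invariance of the SHS6V dynamics together with the definition $g'_t = \tau_t(g_t)$ implies that the shifted-SHS6V transition acts on measures by $\mu \mapsto (\mu P_{\text{SHS6V}}) \circ \tau^{-1}$. Since $\mu_\rho^{\text{HS}}$ is a homogeneous product measure, it satisfies $\mu_\rho^{\text{HS}} \circ \tau^{-1} = \mu_\rho^{\text{HS}}$, and combining these two invariances yields $\mu_\rho^{\text{HS}} P_{\text{shifted SHS6V}} = \mu_\rho^{\text{HS}}$, hence $\mu_\rho^{\text{HS}} \in \mathfrak{I}^{\text{HS}}$.

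For the blocking measures $\mu_{(n)}^{\text{HS}}$, the plan is to use the method of fusion to reduce to the $I = J = 1$ case of Theorem \ref{thm:s6vmoving}, in which the S6V blocking measures $\mu_{(m)}$ have already been shown to be stationary for shifted S6V. Under the standard fusion correspondence (vertical $I$-fold fusion of rows together with horizontal $J$-fold fusion of columns, made symmetric by the assumption $I = J$), one step of shifted SHS6V should be realizable as $I$ sequential steps of shifted S6V applied to an appropriate fused configuration. Under this correspondence the inhomogeneous product Bernoulli $\mu_*$ should push forward to the inhomogeneous $q$-negative binomial $\mu_*^{\text{HS}}$, and the conditional restrictions $\mu_{(m)}$ on $A_m$ should push forward to $\mu_{(n)}^{\text{HS}}$ on $G_n$, with the relation between $m$ and $n$ dictated by how $I$ copies of the $\tau$-shift on $\mathcal{S}$ aggregate into a single $\tau$-shift on $\mathcal{G}$, consistent with $\tau A_m = A_{m-1}$ and $\tau G_n = G_{n-I}$. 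Because fusion intertwines the two Markov kernels and respects shifts, stationarity of $\mu_{(m)}$ under shifted S6V transfers to stationarity of $\mu_{(n)}^{\text{HS}}$ under shifted SHS6V.

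The main technical obstacle lies in rigorously carrying out the fusion at the level of the infinite-particle blocking measures. Two ingredients need to be combined. The first is an algebraic $q$-Pochhammer identity matching the fused marginals of $\mu_*$ (a specific product of $I$ Bernoullis with densities $q^{-j}/(1+q^{-j})$ for $j$ running over an $I$-block) to the $q$-negative binomial marginal $q\text{NB}(q^{-I}, -q^{-kI+1})$ of $\mu_*^{\text{HS}}$. The second is the classical vertex-level fusion of \cite{corwin2016stochastic}, guaranteeing that $I$ sequential S6V updates applied to the fused configuration reproduce the SHS6V transition encoded by $\mathsf{L}_\alpha^{I,I}$, with $\alpha, q$ identified with $b_1, b_2$ via $b_1 = (1+\alpha q)/(1+\alpha)$ and $b_2 = (\alpha + q^{-1})/(1+\alpha)$. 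Both identities are well-known in the fusion literature, but aligning the parametric identifications, handling the fusion on the invariant subspace $G_n$ with its infinite tails, and checking that the single-step $\tau$-shift of SHS6V corresponds correctly to $I$ copies of the $\tau$-shift of S6V are the delicate points that will drive the proof.
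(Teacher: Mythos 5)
Your reduction via convexity and your treatment of the homogeneous product measures $\mu_{\rho}^{\text{HS}}$ are fine and match the paper's strategy. The gap is in the blocking-measure step, where you propose to transfer stationarity of the S6V blocking measures $\mu_{(m)}$ (Theorem \ref{thm:s6vmoving}) through fusion. The fusion of Proposition \ref{prop:fusion} does \emph{not} identify one step of the SHS6V model with $I$ steps of the \emph{homogeneous} S6V model with parameters $b_1 = \frac{1+\alpha q}{1+\alpha}$, $b_2 = \frac{\alpha+q^{-1}}{1+\alpha}$: the $I\times J$ grid into which a single $\L_{\alpha}^{I,J}$-vertex unfuses carries geometrically progressing spectral parameters $\alpha q^{i+j-2}$, so the unfused object is the space-time \emph{inhomogeneous} unfused SHS6V model of Definition \ref{def:unfused}, whose parameters $b_1(x,t), b_2(x,t)$ in \eqref{eq:b1b2} vary within each $I\times J$ block. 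Theorem \ref{thm:s6vmoving} and Proposition \ref{prop:mu*mfI} are statements about the homogeneous model and do not apply to this inhomogeneous semigroup, so stationarity of $\mu_*$ (hence of the $\mu_{(m)}$) cannot simply be ``transferred''; with your parametric identification the product of $I\times I$ homogeneous vertex weights would not even reproduce $\L_{\alpha}^{I,I}$.

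What closes the gap in the paper is the observation that although $b_1(x,t)$ and $b_2(x,t)$ vary, their ratio is identically $q$, and the local pseudo-stationarity relation \eqref{eq:bs} of Lemma \ref{lem:bs} depends only on that ratio; one therefore reruns the argument of Section \ref{sec:blk} for the inhomogeneous semigroup $(\overline{S}_t)$ to obtain $\mu_* \overline{S}_t = \mu_* \circ \tau_t^{-1}$. This is combined with the characterization $\mfIHS = \{\nu \circ \mathbf{f}_I^{-1} : \nu \overline{S}_I = \nu \circ \tau_I^{-1},\ \nu \in \overline{\mathcal{M}}_1(\mathcal{S})\}$, which also requires checking that $\mu_*$ lies in the image of $\boldsymbol{\Lambda}'_I$ (blockwise $q^{-1}$-exchangeability), and with Lemma \ref{lem:IMS} to identify $\mu_* \circ \mathbf{f}_I^{-1} = \mu_*^{\text{HS}}$. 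The individual $\mu_{(n)}^{\text{HS}}$ are then obtained by projecting $\mu_*^{\text{HS}}$ onto the irreducible classes $G_n$ at the fused level, which sidesteps the bookkeeping of matching your sets $A_m$ to $G_n$ under $\mathbf{f}_I$ (note the block boundary of $\mathbf{f}_I$ does not align with the split at $0$ used to define $A_m$). You correctly identified the two algebraic ingredients, but without the constant-ratio observation and the inhomogeneous pseudo-stationarity computation the argument for the blocking measures does not go through.
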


\begin{remark}\label{rmk:whycant}
Unlike Theorem \ref{thm:s6vmoving}, we \red{do not} know whether $\mfI^{\text{HS}}$ is the closed convex hull of  $\big(\{\mu_{\rho}^{\text{HS}}\}_{\rho \in [0, 1]} \cup \{\mu_{(n)}^{\text{HS}}\}_{n \in \mathbb{Z}}\big)$. \red{The major difficulty is that in the situation of the S6V model, each location allows either zero or one particle, and we can access the asymptotic density of the extremal non-translation invariant stationary distribution at $\pm \infty$ with the help of Lemma \ref{lem:temp3}, which helps us obtain Lemma \ref{lem:ordering}. 
For the SHS6V model, each location allows more than one particle, and we can not obtain the asymptotic density of the extremal non-translation invariant stationary distribution at $\pm \infty$ using the same approach.} We leave this problem to future work. 
\end{remark}
\red{
\begin{remark}
It would also be interesting to study the set of stationary distributions for the SHS6V model and shifted SHS6V model, with $I, J$ being non-integer. We believe that the product measures $\{\mu_\rho^{\text{HS}}\}_{\rho \in [0, 1]}$ are stationary for the SHS6V model. However, We are not sure if there are additional extremal stationary distributions for the SHS6V model. When $I = J$, the same result as in Theorem \ref{thm:blockingHS} should hold. 
\end{remark}
} 
\subsection{The proof idea} 

The proof for Theorem \ref{thm:st} and \ref{thm:s6vmoving} rely on two tools -- \emph{coupling} and \emph{current analysis}. The coupling method allows us to measure the difference between two stationary distributions. For example, via computing the generator of coupled ASEP, \cite[Theorem 1.4]{liggett1976coupling} classified all the stationary distributions of ASEP. 
By \cite[Section 2]{aggarwal2020limit}, there exists a coupling of two S6V models obtained via looking at the two-class S6V model. Unlike ASEP, the semigroup of the (coupled) S6V model is  complicated. Instead of \red{carrying out} a direct computation as \cite{liggett1976coupling}, we study 
the \emph{discrepancies} \red{of the coupled S6V model}.

The discrepancies refer to the locations where one of the S6V models in the coupled process has a particle and the other does not. There are two types of discrepancies, depending on whether we have a (particle, hole) or (hole, particle) in the coupled S6V model. The discrepancies play the role \red{of} second class particles in the two-class S6V model, so they \red{cannot} be created from nowhere and their behavior can be studied. \red{An important observation will be that} a pair of discrepancies with different types will annihilate each other if they end up at the same location. 

The current refers to the number of particles that move across a location at a given time. For the 
S6V model \red{and its multi-class generalization}, a nice property that will be used several times in our proof is that the current is upper bounded. In particular, it is either zero or one. \red{Note that this is a special property for the S6V model since for general interacting particle systems, one might allow multiple particles to move across a location at a given time. The reason behind is that the current of the S6V model at time $t$ and location $x$ is indicated by the number of horizontal edge of $(x, t)$ -- $(x+1, t)$ in the tiling interpretation of the S6V model, which is either zero or one.}

The starting point of our proof is Proposition \ref{prop:coramol}, which shows that for any stationary distribution of the coupled S6V model, we 
\red{cannot}
have both types of discrepancies. This would imply that all extremal translation invariant stationary distributions are product Bernoulli measures. Proposition \ref{prop:coramol} was proved in \cite[Corollary 3.9]{aggarwal2020limit}, we provide a self-contained proof which is similar to \cite{aggarwal2020limit}. 
The idea is via counting the number of discrepancies in an interval. On one hand, since the current is either zero or one, there can be at most two discrepancies entering the interval during each time of update for the coupled S6V model. On the other hand, if with positive probability there are discrepancies of different types, then by taking a long enough interval and translation invariance, the expected number of discrepancies that are annihilated each time can be very large. Hence, we can find an interval such that the expectation fo the number of discrepancies inside is \red{strictly} decreasing. This contradicts 
the stationarity, which implies that the expectation of the number of discrepancies in any interval must be the same as time evolves.

To prove Theorem \ref{thm:st}, we need to show that any extremal stationary distribution must be translation invariant. We adopt the idea of \cite{bramson2002characterization} and couple the S6V model with its horizontal shift. By tracking the discrepancies in the coupled process, we can show that the number of discrepancies inside $[-n, n\red{]}$ at time $\lfloor \sqrt{n} \rfloor$ is $o(n)$ as $n \to \infty$, see Proposition \ref{prop:liggettineq}.
Using this we can show that any extremal stationary distribution is either larger or equal or smaller than its horizontal shift \red{(there is a partial order on the space of probability measures, see Definition \ref{def:ordering})}. We rule out the first and last situation by noting that the expectation of the current of the S6V model at location $x$ is a monotone function of the particle configuration on the left of $x$. Hence, if the stationary distribution is larger or smaller than its horizontal shift, the expectations of the currents at neighboring positions are not equal. This shows that all stationary distributions of the S6V model have to be translation invariant. 

To prove Theorem \ref{thm:s6vmoving}, we first construct a stationary blocking measure by observing a local property of pseudo-stationarity, see Lemma \ref{lem:bs}. We prove \red{that} the blocking measure $\mu_*$ defined in \eqref{eq:prodinhomo} is stationary for the shifted S6V model by iterating this local property. This \red{will be} done in Section \ref{sec:blk}. 
In Section \ref{sec:classify}, we apply coupling and current analysis to show that the extremal stationary distributions of the shifted S6V model are given by $\{\mu_\rho\}_{\rho \in [0, 1]}$ and the projection of $\mu_*$ onto the irreducible subspace $A_n$.

In the end, we generalize our results to the SHS6V model. By the method of fusion \cite{kirillov1987exact, corwin2016stochastic}, it turns out that we can unfuse the SHS6V model into the unfused SHS6V model defined in Definition \ref{def:unfused}, see Proposition \ref{prop:fusion}. The unfused SHS6V model is an inhomogeneous version of the S6V model whose $b_1$ and $b_2$ parameters have period $I$ in space and $J$ in time. 
Using a similar argument used for proving Theorem \ref{thm:st}, we obtain all the extremal stationary distributions for the unfused SHS6V model. By the method of fusion, we then obtain all the extremal stationary distributions for the SHS6V model, see Theorem \ref{thm:st1}. For the shifted SHS6V model with $I = J$, one can also obtain the stationary blocking measure $\mu_*^{\HS}$ by using the method of fusion and this shows Theorem \ref{thm:blockingHS}. 
\subsection{Related literature}
The stationary distribution\red{s} of various interacting particle systems \red{have} been studied intensively over the past 50 years. For the symmetric exclusion process, the stationary distributions are well-understood by the work of \cite{liggett1973characterization, liggett1974characterization}. The question is more challenging in the asymmetric case.
For ASEP, the \red{set} of stationary distribution\red{s} is completely understood in the work of \cite{liggett1976coupling}, in particular there exists a stationary blocking measure which is non-translation invariant. \cite{bramson2002stationary, bramson2002characterization} studied the non-nearest jump particle system and provided sufficient conditions for the existence/non-existence of the blocking measures. \cite{bramson2005exclusion} studied the existence of the blocking measure for exclusion processes in higher dimensions. Recently, \cite{amir2021invariant} studied a multi-lane generalization of the exclusion process, which interpolates the exclusion process in one and two dimensions. 
For the multi-class interacting particle systems, the stationary distributions of the multi-class totally asymmetric exclusion process and multi-class ASEP were studied in \cite{angel2006stationary, ferrari2007stationary, ferrari2009multiclass, cantini2015matrix, martin2020stationary, corteel2022multiline}. Other results concerning the stationary distributions of different interacting particle systems include \cite{andjel1982invariant, derrida1993exact, frometa2019boundary, floreani2020orthogonal}.

The previous results are for continuous time interacting particle systems, while the stochastic vertex models are discrete time particle systems which have intricate dynamics. \cite{aggarwal2018current} showed that the product Bernoulli measures are stationary for the S6V model. In addition, \cite{aggarwal2020limit} showed that these are the only extremal translation invariant stationary distribution. The product $q$-negative binomial distribution was shown to be stationary for the SHS6V model in \cite{borodin2018inhomogeneous, imamura2020stationary, lin2020kpz}.
Our result provides a more complete characterization of the stationary distributions for the S6V and SHS6V model and their shifted versions.  

\subsection*{Outline}
\red{In Section \ref{sec:pre}, we explain the definition of the S6V model with infinitely many particles and recall the coupling for the S6V model introduced in \cite[Section 2]{aggarwal2020limit}.} In Section \ref{sec:translationinvariant}, we use the coupling method to show that any extremal translation invariant stationary distribution must be the product Bernoulli distribution. In Section \ref{sec:coupling}, we show that there is a partial order between any extremal stationary distribution and its horizontal shift. In Section \ref{sec:current}, we analyze the current of the S6V model and conclude Theorem \ref{thm:st}. In Section \ref{sec:blk}, we show that the probability measure $\mu_*$ is stationary for the shifted S6V model by proving a local property called pseudo-stationarity. In Section \ref{sec:classify}, we complete the proof of Theorem \ref{thm:s6vmoving}. In Section \ref{sec:sketch}, we extend our result for the S6V model to the SHS6V model and prove Theorem \ref{thm:st1} and \ref{thm:blockingHS}. In Appendix \ref{sec:productstat}, we explain how to show that the product Bernoulli measures are stationary for the S6V model. 

\subsection*{Notations}
For $\eta, \xi \in \mathcal{S}$, we say that $\eta \geq \xi$ if $\eta(x) \geq \xi(x)$ for any $x \in \mathbb{Z}$. We say that $\eta > \xi$ if $\eta \geq \xi$ and there exists $x \in \mathbb{Z}$ such that $\eta(x) > \xi(x)$. 
\subsection*{Acknowledgment}
The author thanks Amol Aggarwal, Ivan Corwin and Pablo Ferrari for helpful discussion. We also thank  Amol Aggarwal and Ivan Corwin for helpful comments on the paper. \red{We are grateful to three anonymous referees, who provide plenty of useful suggestions which improve the presentation of the paper significantly.} 
This paper is based upon work supported by the National Science Foundation
under Grant No. DMS-1928930 while the author participated in a program hosted
by the Mathematical Sciences Research Institute in Berkeley, California, during
the Fall 2021 semester.
\section{\red{Some Preliminaries}}
\label{sec:pre}
\subsection{\red{The S6V model with infinitely many particles}}
\label{sec:infinite}
\red{Let us explain how to extend the definition of the S6V model in Definition \ref{def:s6v} to that with an infinite number of particles. Without loss of generality, we assume that $M = N = \infty$ in Definition \ref{def:s6v}. In other words, the S6V model has neither a leftmost particle nor a rightmost particle. The situation where only one of $M, N$ is infinite can be tacked in a similar way.}

\red{We adopt the method in \cite[Section 2]{aggarwal2020limit}, which is also related to Harris's construction of ASEP \cite{harris1978additive}. The idea is to divide the lattice points on $\mathbb{Z}$ into a countable union of (random) intervals such that the update of the particles in these intervals are independent.} 

\red{It suffices to explain how we define the update of the S6V model in Definition \ref{def:s6v} from time $t$ to $t+1$ in the situation of infinitely many particles. Since $M = N = \infty$, we suppose that the location of the particles at time $t$ is given by the vector}
\begin{equation*}
\red{\mathbf{p}_t = (\dots < p_t (-1) < p_t (0) < p_t(1) < \cdots).}
\end{equation*}
\red{We recall the i.i.d. random variables $\{\chi_t (x)\}_{x \in \ZZ, t \in \ZZ_{\geq 0}}$ 
from Definition \ref{def:s6v} and know that $\mathbb{P}(\chi_t (x) = 0) = 1 - b_1 > 0$ for every $t \in \mathbb{Z}_{\geq 0}$ and $x \in \mathbb{Z}$. \red{B}y Borel-Cantelli lemma, with probability one we can find a random sequence of integers $\dots <d_{-1} < d_0 < d_1 < \cdots$  such that $\chi_t (p_t (d_i)) = 0$ for every $i \in \mathbb{Z}$. According to Definition \ref{def:s6v}, this means that the $d_i$-th particle will jump one step to its right no matter what happens to its left. As a consequence, we can update the particles with labels that belong to $[d_i, d_{i+1} - 1]$ independently, conditioned on the $d_i$-th particle would jump and $(d_{i+1} - 1)$-th particle \red{cannot} move across $p_t (d_{i+1})$. This reduces the question of understanding the update of the S6V model on the full line to understanding the update of the S6V model on finite length intervals $[p_t(d_i), p_t (d_{i+1})], i \in \mathbb{Z}$, which has been defined in Definition \ref{def:s6v}.} 
\label{sec:two-class}
\subsection{Two-class S6V model}
We want to introduce a coupling for the S6V models. To do that, we need to first introduce the two-class S6V model.

The multi-class S6V model appears as the spin-$\frac{1}{2}$ case of the stochastic $U_q (\widehat{\mathfrak{sl}}_{n+1})$ vertex model, which was introduced in the work of \cite{bazhanov1985trigonometric, jimbo1986quantum}. The model was also studied by the recent works of \cite{ kuniba2016stochastic, kuan2018algebraic, borodin2018coloured}.

Utilizing the multi-class S6V model, \cite[\red{Section 2.4}]{aggarwal2020limit} introduced a coupling between the S6V models. 
This coupling is called the \emph{higher rank coupling}. 
In this paper, we only need to couple two S6V models. Hence, we only 
need to
introduce the two-class S6V model 
in order to define the coupling.

The vertex configurations of the two-class S6V model are indexed by four numbers $(i_1, j_1; i_2, j_2) \in \{1, 2, \infty\}^4$. The numbers should be interpreted as the classes of lines on the 
\red{bottom, left} and top, right of a vertex, 
see Figure \ref{fig:twocolor}. 
We assign stochastic weights to the vertex configurations. 
\begin{figure}[ht]
\centering
\begin{tabular}{|c | c | c| c | c | c| c|}
\hline
 &
\begin{tikzpicture}[scale = 0.8]
\draw[dashed, thick] (-0.5, 0) -- (0.5, 0);
\draw[thick] (0, -0.5) -- (0, 0.5);
\node at (-0.8, 0) {$j$};
\node at (0.8, 0) {$j$};
\node at (0, -0.8) {$i$};
\node at (0, 0.8) {$i$};
\end{tikzpicture}
& 
\begin{tikzpicture}[scale = 0.8]
\draw[thick, dashed] (-0.5, 0) -- (0, 0) -- (0, 0.5);
\draw[thick] (0, -0.5) -- (0, 0) -- (0.5, 0);
\node at (-0.8, 0) {$j$};
\node at (0.8, 0) {$i$};
\node at (0, -0.8) {$i$};
\node at (0, 0.8) {$j$};
\end{tikzpicture}
&
\begin{tikzpicture}[scale = 0.8]
\draw[thick] (-0.5, 0) -- (0.5, 0);
\draw[dashed, thick] (0, -0.5) -- (0, 0.5);
\node at (-0.8, 0) {$i$};
\node at (0.8, 0) {$i$};
\node at (0, -0.8) {$j$};
\node at (0, 0.8) {$j$};
\end{tikzpicture}
&
\begin{tikzpicture}[scale = 0.8]
\draw[thick] (-0.5, 0) -- (0, 0) -- (0, 0.5);
\draw[dashed, thick] (0, -0.5) -- (0, 0) -- (0.5, 0);
\node at (-0.8, 0) {$i$};
\node at (0.8, 0) {$j$};
\node at (0, -0.8) {$j$};
\node at (0, 0.8) {$i$};
\end{tikzpicture}
&
\begin{tikzpicture}[scale = 0.8]
\draw[thick] (-0.5, 0) -- (0.5, 0);
\draw[thick] (0, -0.5) -- (0, 0.5);
\node at (-0.8, 0) {$i$};
\node at (0.8, 0) {$i$};
\node at (0, -0.8) {$i$};
\node at (0, 0.8) {$i$};
\end{tikzpicture}
&
\begin{tikzpicture}[scale = 0.8]
\draw[thick, dashed] (-0.5, 0) -- (0.5, 0);
\draw[thick, dashed] (0, -0.5) -- (0, 0.5);
\node at (-0.8, 0) {$j$};
\node at (0.8, 0) {$j$};
\node at (0, -0.8) {$j$};
\node at (0, 0.8) {$j$};
\end{tikzpicture}
\\
\hline
weight & $b_1$ & $1 - b_1$ & $b_2$ & $1 - b_2$ & $1$ & $1$
\\
\hline 
\end{tabular}
\caption{When $i < j$, the stochastic weights of each possible configuration is given as above.}
\label{fig:twocolor}
\end{figure}
We set $w(i, i; i, i) = 1, i \in \{1, 2, \infty\}$. In addition, for $i, j \in \{1, 2, \infty\}$ satisfying $i < j$, define
\begin{align*}
w(i, j; i, j) &= b_1,\qquad w(i, j; j, i) = 1- b_1, \\
w(j, i; j, i) &= b_2, \qquad w(j, i; i, j) = 1- b_2.
\end{align*}
For all the other vertex configurations, we set the weight to be zero. 

\red{For the each vertex configuration in Figure \ref{fig:twocolor},} we view the lines from left and bottom as the input lines, and the lines to the right and above as the output lines. 
The vertex is conservative in lines since the number of class $r$ input lines equals the number of class $r$ output lines. 
The weights are also stochastic since it defines a probability measure on the output lines when we fix the input lines.

\red{When we interpret the one-class S6V model as an interacting particle system, we sample the vertex configurations for the vertices on the upper half plane as in Figure \ref{fig:S6Vparticle}, the resulting upright paths in Figure \ref{fig:S6Vparticle} are the trajectories of different particles in the S6V model. 
We can also interpret the two-class S6V model as an interacting particle system 
by sampling the vertex configurations in Figure \ref{fig:twocolor} for the vertices on the upper half plane. 
We visualize the class $1$ lines as red lines, class $2$ lines as blue lines and class $\infty$ lines as empty lines. The outcome of the sampling gives a collection of upright paths. Some of the upright paths are given by red lines and some of the upright path are given by blue lines. They respectively denote the trajectories of the first-class and second-class particles in the two-class S6V model, see Figure \ref{fig:two-class}.}
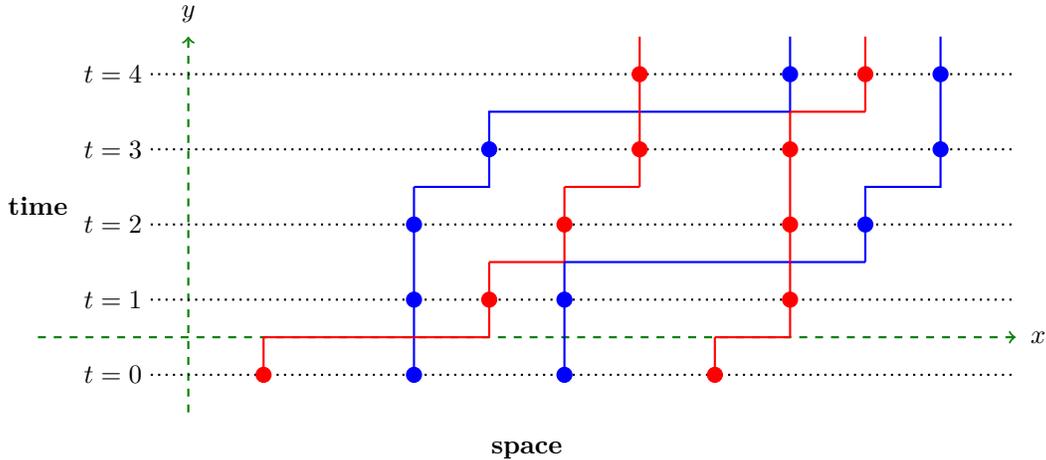
\begin{figure}[ht]
	\centering
	\begin{tikzpicture}
	\draw[thick][aoe][dashed][->] (-1, 0) -- (12, 0);
	\draw[thick][aoe][dashed][->] (1, -1) -- (1, 4);
	\draw[thick, red](2, -0.5) -- (2, 0);
	\draw[thick, blue](4, -0.5) -- (4, 0);
	\draw[thick, blue] (6, -0.5) -- (6, 0.5);
	\draw[thick, red](8, -0.5) -- (8, 0);
	\draw[thick, red] (2, 0) -- (4, 0);
	\draw[thick, blue] (4, 0) -- (4, 0.5);
	\draw[thick, red] (4,0) -- (5,0);
	\draw[thick, red] (5,0) -- (5, 0.5);
	\draw[thick, red] (8, 0) -- (9, 0);
	\draw[thick, red] (9, 0) -- (9, 0.5);
	\node at (1, 4.3) {$y$};
	\node at (12.3, 0) {$x$};
	\draw[thick, blue] (4, 0.5) -- (4, 1.5);
	\draw[thick, red] (5, 0.5) -- (5,1);
	\draw[thick, red] (5, 1) -- (6, 1);
	\draw[thick, red] (6, 1) -- (6, 1.5);
	\draw[thick, blue] (6, 0.5) -- (6, 1);
	\draw[thick, blue](6, 1) -- (9, 1);
	\draw[thick, red] (9, 0.5) -- (9, 1);
	\draw[thick, blue] (9, 1) -- (10,1);
	\draw[thick, red] (9, 1) -- (9, 1.5);
	\draw[thick, blue](10, 1) -- (10, 1.5);
	\draw[thick, blue] (4, 1.5) -- (4, 2);
	\draw[thick, blue](4, 2) -- (5, 2) -- (5, 3) -- (7, 3) -- (9, 3) -- (9, 4);
	\draw[thick, red] (6,1.5) -- (6, 2);
	\draw[thick, red] (6, 2) -- (7,2);
	\draw[thick, red](7, 2) -- (7, 2.5);
	\draw[thick, red] (9, 1.5) -- (9, 2.5);
	\draw[thick, red] (7, 2.5) -- (7, 4);

	\draw[thick, red] (9, 2.5) -- (9, 3);
	\draw[thick, red] (9, 3) -- (10, 3);
	\draw[thick, red] (10, 3) -- (10, 4);
	\draw[thick, blue] (10, 1.5) -- (10, 2) -- (11, 2) -- (11, 4);
	\draw[dotted, thick] (0.5, -0.5) -- (12, -0.5);
	\draw[dotted, thick] (0.5, 0.5) -- (12, 0.5);
	\draw[dotted, thick] (0.5, 1.5) -- (12, 1.5);
	\draw[dotted, thick] (0.5, 2.5) -- (12, 2.5);
	\draw[dotted, thick] (0.5, 3.5) -- (12, 3.5);
	\node at (0, -0.5) {$t = 0$};
	\node at (0, 0.5) {$t = 1$};
	\node at (0, 1.5) {$t = 2$};
	\node at (0, 2.5) {$t = 3$};
	\node at (0, 3.5) {$t = 4$};
	\node at (5.5, -1.5) {\textbf{space}};
	\node at (-1, 1.75) {\textbf{time}};
	\draw[fill][red] (2, -0.5) circle (0.1); 
	\draw[fill][blue] (4, -0.5) circle (0.1); 
	\draw[fill][blue] (6, -0.5) circle (0.1); 
	\draw[fill][red] (8, -0.5) circle (0.1); 
	\draw[fill][blue] (4, 0.5) circle (0.1); 
	\draw[fill][red] (5, 0.5) circle (0.1); 
	\draw[fill][blue] (6, 0.5) circle (0.1); 
	\draw[fill][red] (9, 0.5) circle (0.1); 
	\draw[fill][blue] (4, 1.5) circle (0.1); 
	\draw[fill][red] (6, 1.5) circle (0.1); 
	\draw[fill][red] (9, 1.5) circle (0.1); 
	\draw[fill][blue] (10, 1.5) circle (0.1); 
	\draw[fill][blue] (5, 2.5) circle (0.1); 
	\draw[fill][red] (7, 2.5) circle (0.1); 
	\draw[fill][red] (9, 2.5) circle (0.1); 
	\draw[fill][blue] (11, 2.5) circle (0.1); 
	\draw[fill][red] (7, 3.5) circle (0.1); 
	\draw[fill][blue] (9, 3.5) circle (0.1); 
	\draw[fill][red] (10, 3.5) circle (0.1); \draw[fill][blue] (11, 3.5) circle (0.1);
	\end{tikzpicture}
	\caption{The two-class S6V model viewed as an interacting particle system. The red and blue particles in the picture represent respectively the first and second-class particles. The trajectories of the first and second-class particles are respectively given by red upright paths and blue upright paths.}
	\label{fig:two-class}
\end{figure}

We proceed to provide \red{a concrete} definition for \red{this} interacting particle system. Fix finite integers $M_r, N_r$ such that $M_r \leq 0 \leq N_r$ for $r = 1, 2$. 
Let $\mathbf{p}^r_t := (p^r_t (-M_r) < \dots < p^r_{t} (N_r))$ denote the location 
of first-class (resp. second-class) particles at time $t$ for $r = 1$ (resp. $r = 2$). Note that each lattice \red{point} on $\mathbb{Z}$ allows at most one particle, i.e. we require $\mathbf{p}^1_t \cap \mathbf{p}^2_t = \emptyset$ for any $t \in \mathbb{Z}_{\geq 0}$. \red{The following definition originally appears in \cite[Section 2.3]{aggarwal2020limit}.}
\begin{defin}[Two-class S6V model with \red{finitely many} particles]
\label{def:2classs6v}
We define the two-class S6V model $\red{\mathfrak{p}}_t = (\mathbf{p}^1_t, \mathbf{p}^2_t)$ as a discrete time-homogeneous Markov process. We sample independent random variables $\{\chi^i_t (x)\}_{t \in \mathbb{Z}_{\geq 0}, x \in \mathbb{Z}, i = 1, 2}$, $\{j^i_t (x)\}_{t \in \mathbb{Z}_{\geq 0}, x \in \mathbb{Z}, i = 1, 2}$ such that for every $t \in \mathbb{Z}_{\geq 0}$, $x \in \mathbb{Z}$ and $i \in \{1, 2\}$, 
\begin{align*}
&\mathbb{P}(\chi_t^i (x)  = 1) = b_1, \qquad \mathbb{P}(\chi_t^i (x) = 0) = 1 - b_1, \\
&\mathbb{P}(j_t^i (x) = n) = (1-b_2) b_2^{n-1}, \qquad \forall\, n \in \mathbb{Z}_{\geq 1}. 
\end{align*}
Fix arbitrary $t \in \mathbb{Z}_{\geq 0}$ and set $p_{t+1}^{r} (-M_r -1 ): = -\infty$ and $p_{t+1}^r (N_r+1) = +\infty$ for $r = 1, 2$.
We define the update rule from $\red{\mathfrak{p}}_t$ to $\red{\mathfrak{p}}_{t+1}$ as follow: 
\begin{enumerate}[leftmargin = 2em]
\item  
We first define the update rule from $\mathbf{p}^1_t$ to $\mathbf{p}^1_{t+1}$. 
We define $p^1_{t+1}(i), i = -M_1, \dots, N_1$ sequentially. 
Suppose that we have defined 
$p^1_{t+1} (i)$ for fixed $i \in \{-M_1 - 1, \dots, N_1 \red{- 1}\}$, we proceed to define $p^1_{t+1} (i+1)$.
\begin{enumerate}[leftmargin = 2em, label = (\roman*)]
	\item 
	If $p^1_{t+1} (i) < p^1_{t} (i+1)$ and $\chi^{\red{1}}_t (p^1_{t} (i+1)) = 1$, we set $p_{t+1}^1 (i+1) := p^1_{t} (i+1)$. 
	\item If $p^{\red{1}}_{t+1} (i) = p^{\red{1}}_{t} (i+1)$ or $\chi^{\red{1}}_t (p^1_{t} (i+1)) = 0$, we set $p^1_{t+1} (i+1) :=  \min(p^1_{t} (i+1) + j^{\red{1}}_t (p^1_{t} (i+1)), p^1_{t} (i+2))$.
\end{enumerate}
In other words, \red{t}he first-class particles update in the same way as the particles in the S6V model defined in Definition \ref{def:s6v} and they ignore the second-class particles. 
\item We proceed to define the update from $\mathbf{p}^2_t$ to $\mathbf{p}^2_{t+1}$. Suppose that we have defined $p^2_{t+1} (i)$ for fixed $i \in \{-M_2-1, \dots, N_2 - 1\}$, we proceed to define $p^2_{t+1}(i+1)$. 
\begin{enumerate}[leftmargin = 2em, label = (\roman*)]
\item If either (a) there exists $k \in \{-M_1, \dots, N_1 - 1\}$ such that $p^1_t (k) < p^2_t (\red{i+1}) < p^1_{\red{t+1}} (\red{k})$ or (b) $p_{t+1}^2(i) < p_t^2(i+1)$, \red{$p_t^2 (i+1) \notin \{p_{t+1}^1 (k)\}_{k \in [-M_1, N_1]}$} and $\chi_t^{2} (p_t^2 (i\red{+1})) = 1$ holds, then the second-class particle at $p_t^2(i+1)$ stays and we define $p^2_{t+1} (i+1) := p^2_t (i+1)$. Else it moves. \red{In other words, the second-class particle at $p_t^2 (i+1)$ will stay iff either (a) a first-class particle has jumped across the location $p_t^2 (i+1)$ or (b) $\chi_t^2 (p_t^2(i+1)) = 1$ and no first/second-class particles that have completed their update stay at $p_t^2(i+1)$.} 
\item When the second-class particle at $p_t^2(i+1)$ moves, 
we define   
$$U:= \inf \Big\{p^1_{t} (k): \red{k \in \{-M_1, \dots, N_1\} \text{ which satisifes }} p^1_t (k) > p^2_t(i+1) \text{ and } p^1_t (k) < p^1_{t+1} (k)\Big\}.$$ 
Stated alternatively, $U$ is the leftmost position of a first-class particle that is on the right of $p_t^2(i+1\red{)}$ and that moves \red{during the update from time $t$ to $t+1$.} 

We let $j:= j_t^2(p_t^2(i+1))$ and define $V$ to be the minimal integer such that there exists $j$ integers $x_1, \dots, x_j$ satisfying (a) $p_t^2(i+1) < x_1 < \dots < x_j = V$ and (b) there does not exist $i \in \{-M_{\red{1}}, \dots, N_{\red{1}}\}$ such that $p_t^1 \red{(i)} = p_{t+1}^{1} \red{(i)} \in \{x_1, \dots, x_j\}$. In other words, $V$ is the location of the second-class particle at $p_t^2(i+1)$ after jumping $j$ spaces on its right, skipping the first-class particles that decided to stay.  

Finally, we set $p_{t+1}^2(i+1) := \min(U, V, p_t^2(i+2))$.
\end{enumerate} 
\end{enumerate}
\end{defin}
Following an argument which is similar to  Section \ref{sec:infinite}, the definition above extends to the situation when there are infinitely many particles.

The particle configuration of the two-class S6V model can also be described using the two-class S6V model occupation process $\widetilde{\eta}_t = \{\widetilde{\eta}_t (x): x \in \mathbb{Z}\}$ such that for all $x \in \mathbb{Z}$, we have 
\begin{equation*}
\widetilde{\eta}_t (x) := \sum_{r = 1}^2 r \mathbbm{1}_{\{\exists i \text{ s.t. } p^r_t (i) = x\}}. 
\end{equation*}
In other words, we set $\widetilde{\eta}_t (x) = 1$ (resp. 2) if at time $t$ there is a first (resp. second)-class particle staying at $x$. We set $\widetilde{\eta}_t (x) = 0$ if at time $t$ there is no particle at $x$. \red{Note that the Markov processes $\widetilde{\eta}_t$ and $\mathfrak{p}_t$ are two equivalent ways of characterizing the two-class S6V model.}

The following lemma is immediate once we observe that the weight of vertex configuration for the two-class S6V model in Figure \ref{fig:twocolor} reduces to the (one-class) the S6V model  in Figure \ref{fig:vertexconfig} if we identify both class $1$ and $2$ lines as bold lines and class $\infty$ lines as empty lines. 
\begin{lemma}\label{lem:unclass}
If we unclass the particles in the two-class S6V model, then the two-class S6V model reduces to the (one-class) S6V model. \red{More precisely, we define $\mathbf{p}_t := (p_t (-M) < \dots < p_t(N))$ such that for all $t \in \mathbb{Z}_{\geq 0}$, the set $\{p_t(i): i = -M, \dots N\}$ is equal to  $\cup_{r = 1}^2 \{p_t^{r}(i): i = -M_r, \dots, N_r\}$. Then we have that $(\mathbf{p}_t)_{t \in \mathbb{Z}_{\geq 0}}$ is exactly the S6V model defined in Definition \ref{def:s6v}}.
\end{lemma}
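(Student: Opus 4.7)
The plan is to reduce the lemma to a purely local check: comparing the vertex weights of the two-class model in Figure \ref{fig:twocolor} (under the identification that classes $1,2$ are the same as a ``bold'' line and class $\infty$ is ``empty'') with the S6V weights in Figure \ref{fig:vertexconfig}. Once the local vertex weights agree, the lemma will follow because both processes in Definitions \ref{def:s6v} and \ref{def:2classs6v} are built by sampling vertex configurations over $\mathbb{Z} \times \mathbb{Z}_{\geq 0}$ sequentially from left to right and bottom to top, in a manner compatible with the conservation of input and output lines.

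First, I would fix an input $(i_1,j_1) \in \{\text{line},\text{no line}\}^2$ and, for each of the four possibilities, sum the two-class weights over all assignments of classes $\{1,2,\infty\}$ consistent with that input, to obtain an induced distribution over the unclassed output $(i_2,j_2)$. The case analysis is short:
\begin{itemize}
\item If both inputs are empty, the only allowed two-class configuration is $(\infty,\infty;\infty,\infty)$ of weight $1$, matching S6V type I.
\item If both inputs carry lines (with classes either equal or distinct), one checks that the outputs are always $(\text{line},\text{line})$ and the total weight is $1$; matching S6V type II. Here I would use $b_1+(1-b_1)=1$ in the case of distinct classes.
\item If only the bottom input carries a line of class $c\in\{1,2\}$, the two allowed configurations $(c,\infty;c,\infty)$ and $(c,\infty;\infty,c)$ have weights $b_1$ and $1-b_1$, giving outputs matching S6V types V and VI.
\item Symmetrically, if only the left input carries a line, the weights $b_2$ and $1-b_2$ reproduce the S6V types III and IV.
\end{itemize}

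Next, I would appeal to the sequential structure of the updates. The one-class S6V dynamics of Definition \ref{def:s6v} sample output lines at each vertex from the distribution specified by Figure \ref{fig:vertexconfig}, using the Bernoulli and geometric variables $\{\chi_t(x),j_t(x)\}$ only as a convenient realization of those vertex weights. Likewise, Definition \ref{def:2classs6v} realizes the two-class vertex weights of Figure \ref{fig:twocolor} at each vertex via $\{\chi_t^r(x),j_t^r(x)\}_{r=1,2}$; the rather intricate wording of the update is precisely what is needed to turn these auxiliary variables into samples from the two-class weights. Since the two-class weights aggregate (after unclassing) to the S6V weights by Step 1, the induced joint law of the unclassed upright paths sampled by Definition \ref{def:2classs6v} is identical to the law of the paths sampled by Definition \ref{def:s6v}, and the Markov processes $\mathbf{p}_t$ constructed in either way coincide in distribution.

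The only genuine subtlety to watch for is the extension to infinitely many particles (both classes). This is handled exactly as in Section \ref{sec:infinite}: the existence of almost surely infinitely many indices $d_i$ with $\chi_t^1(p_t^1(d_i))=\chi_t^2(p_t^2(d_i))=0$ lets one decompose the update on $\mathbb{Z}$ into independent finite-window updates, on each of which the finite-particle identification just verified applies. I expect the case analysis itself to be routine; the only point that requires care is confirming that the subtle tie-breaking rules in (2)(i)--(ii) of Definition \ref{def:2classs6v} truly reproduce the distribution prescribed by the weights $b_1,1-b_1,b_2,1-b_2$ after unclassing, which is a direct matter of bookkeeping.
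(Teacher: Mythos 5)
Your proposal is correct and follows the same route as the paper, which treats the lemma as immediate from the observation that the two-class vertex weights of Figure \ref{fig:twocolor}, after identifying classes $1$ and $2$ as bold lines and class $\infty$ as empty, aggregate exactly to the S6V weights of Figure \ref{fig:vertexconfig}. Your write-up simply makes explicit the case analysis and the sequential/infinite-particle bookkeeping that the paper leaves implicit.
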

\subsection{Higher rank coupling}
We couple two S6V models $(\eta_t)_{t \in \mathbb{Z}_{\geq 0}}$ and $(\xi_t)_{t \in \mathbb{Z}_{\geq 0}}$, using the two-class S6V model introduced above. Let us recall the higher rank coupling in \cite[Section 2.4]{aggarwal2020limit}. 
\begin{defin}[Higher rank coupling]\label{def:coupling} 
We define the coupled S6V model $(\eta_t, \xi_t)_{t \in \ZZ_{\geq 0}}$, which is a discrete time-homogeneous Markov process on $\mathcal{S} \times \mathcal{S}.$ 
It suffices to state the update rule from $(\eta_t, \xi_t)$ to $(\eta_{t+1}, \xi_{t+1})$ for every fixed $t \in  \mathbb{Z}_{\geq 0}$. 
We define two auxiliary particle configurations $\widetilde{\eta}_t$ and $\widetilde{\xi}_t$ as 
\begin{equation}\label{eq:hkcouple}
\widetilde{\eta}_t (x) := 
\begin{cases}
1, & \text{ if } \eta_t (x) = \xi_t (x) = 1, \\
2, & \text{ if } \eta_t (x) = 1, \xi_t (x) = 0,
\\
0 &\text{ else},
\end{cases}
\qquad \widetilde{\xi}_t(x) := 
\begin{cases}
1, & \text{ if } \xi_t (x) = \eta_t (x) = 1, \\
2, & \text{ if } \xi_t (x) = 1, \eta_t (x) = 0,
\\
0 &\text{ else}.
\end{cases}
\end{equation}
In other words, $\widetilde{\eta}_t$ has a first-class particle at $x$ iff this location is occupied in both $\eta_t$ and $\xi_t$. $\widetilde{\eta}_t$ has a second-class particle \red{at $x$} iff $x$ is occupied in $\eta_t$ but not in $\xi_t$. The interpretation for $\widetilde{\xi}_t$ is similar.  

We update the two-class S6V models from $(\weta_t, \wxi_t)$ to $(\weta_{t+1}, \wxi_{t+1})$. 
The update for both $\weta_t \to \weta_{t+1}$ and $\wxi_t \to \wxi_{t+1}$ follows Definition \ref{def:2classs6v} and uses the \textbf{same} family of random variables $\{\chi^i_t (x), j^i_t (x)\}_{x \in \mathbb{Z}, i = 1, 2}$. 

\red{Then we redefine $\widetilde{\eta}_{t+1}$ and $\widetilde{\xi}_{t+1}$ such that for all $x \in \ZZ$ satisfying $\widetilde{\eta}_{t+1} (x) = \widetilde{\xi}_{t+1} (x) = 2$, we set $\widetilde{\eta}_{t+1} (x) = \widetilde{\xi}_{t+1} (x) = 1$. We keep the value of $\widetilde{\eta}_{t+1} (x), \widetilde{\xi}_{t+1} (x)$ unchanged for all other $x \in \mathbb{Z}$.} We finally set 
\begin{equation}\label{eq:temp4}
\eta_{t+1} (x) := 
\begin{cases}
1, &\qquad \text{ if } \widetilde{\eta}_{t+1} (x) = 1 \text{ or } 2,\\
0 &\qquad \text{ if } \widetilde{\eta}_{t=1} (x) = 0.
\end{cases}
\qquad
\xi_{t+1} (x) := 
\begin{cases}
1, &\qquad \text{ if } \widetilde{\xi}_{t+1} (x) = 1 \text{ or } 2,\\
0 &\qquad \text{ if } \widetilde{\xi}_{t=1} (x) = 0.
\end{cases}
\end{equation}
The above procedure defines the update from $(\eta_t, \xi_t)$ to $(\eta_{t+1}, \xi_{t+1})$.   
\end{defin}
\begin{remark}\label{rmk:seccoupled}
\red{In Definition \ref{def:coupling}, the redefinition of $\widetilde{\eta}_{t+1}$ and $\widetilde{\xi}_{t+1}$ is related to procedure that if two second-class particles respectively in $\widetilde{\eta}_t$ and $\widetilde{\xi}_t$ arrive at the same location at time $t+1$, then they both become the first-class particles. The redefinition of $\widetilde{\eta}_{t+1}$ and $\widetilde{\xi}_{t+1}$  will not change the value of $\eta_{t+1}$ and $\xi_{t+1}$ in \eqref{eq:temp4}. However, it is important since it ensures that $(\widetilde{\eta}_{t+1}, \widetilde{\xi}_{t+1})$ and $(\eta_{t+1},\xi_{t+1})$ satisfy the relation in \eqref{eq:hkcouple}.}
\end{remark}
\begin{remark}
By Lemma \ref{lem:unclass}, we know that the marginals of the coupled S6V model are indeed the S6V models. 
\end{remark}


As observed in \cite[Section 2.4]{aggarwal2020limit}, the coupling in Definition \ref{def:coupling} is \emph{attractive}.
\begin{lemma}[Attractivity]\label{lem:mono}
Under higher rank coupling, if $\eta_t \geq \xi_t$ for some $t  \in \mathbb{Z}_{\geq 0}$, then we have $\eta_T \geq \xi_T$ for any $T \geq t$. If $\eta_t > \xi_t$, then we have $\eta_T > \xi_T$ for any $T \geq t$. 
\end{lemma}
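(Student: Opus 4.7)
The plan is to argue by induction on time; it suffices to prove the one-step statement: if $\eta_t \ge \xi_t$ (resp.\ $\eta_t > \xi_t$), then $\eta_{t+1} \ge \xi_{t+1}$ (resp.\ $\eta_{t+1} > \xi_{t+1}$). The central observation is that $\eta_t \ge \xi_t$ is equivalent to the auxiliary configuration $\widetilde{\xi}_t$ in \eqref{eq:hkcouple} containing \emph{no} second-class particles: a second-class particle of $\widetilde{\xi}_t$ at $x$ would require $\xi_t(x) = 1$ and $\eta_t(x) = 0$, violating the ordering. Under this hypothesis, the first-class positions of $\widetilde{\eta}_t$ and of $\widetilde{\xi}_t$ both coincide exactly with the support of $\xi_t$.

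Next I would exploit the structure of the two-class update in Definition \ref{def:2classs6v}. Its step (1) updates first-class particles \emph{sequentially from left to right using only the positions of the first-class particles and the random variables $\chi^1, j^1$}, ignoring second-class particles entirely. In Definition \ref{def:coupling} the same family $\{\chi^i_t(x), j^i_t(x)\}$ drives both $\widetilde{\eta}$ and $\widetilde{\xi}$. Since the class-1 positions in $\widetilde{\eta}_t$ and $\widetilde{\xi}_t$ coincide and evolve by identical random input, the class-1 positions of $\widetilde{\eta}_{t+1}$ and $\widetilde{\xi}_{t+1}$ (prior to the annihilation step) also coincide. Moreover, because the weights in Figure \ref{fig:twocolor} are class-conservative and $\widetilde{\xi}_t$ has no class-2 particles to begin with, $\widetilde{\xi}_{t+1}$ still has none. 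Consequently the annihilation step in Definition \ref{def:coupling} is vacuous, and the coincidence of class-1 positions is preserved.

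Combining these facts with the projection \eqref{eq:temp4}, any $x$ with $\xi_{t+1}(x) = 1$ must satisfy $\widetilde{\xi}_{t+1}(x) = 1$, which by the coincidence of class-1 positions forces $\widetilde{\eta}_{t+1}(x) \in \{1,2\}$ and hence $\eta_{t+1}(x) = 1$; this gives $\eta_{t+1} \ge \xi_{t+1}$. For the strict statement, $\eta_t > \xi_t$ means $\widetilde{\eta}_t$ has at least one class-2 particle. Conservativity of the two-class S6V weights preserves the class-2 count across the update, and the annihilation step cannot destroy any class-2 particle in $\widetilde{\eta}_{t+1}$ because $\widetilde{\xi}_{t+1}$ has no class-2 particles to annihilate against. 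Hence at least one class-2 particle persists in $\widetilde{\eta}_{t+1}$, witnessing a position in $\eta_{t+1}\setminus \xi_{t+1}$ and giving $\eta_{t+1} > \xi_{t+1}$. Iterating yields the lemma.

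The only delicate point, and the step I would write out most carefully, is verifying that Definition \ref{def:2classs6v}(1) really does make the first-class update entirely independent of second-class particles and of the randomness $\chi^2, j^2$; once this is explicit, the equality of class-1 positions across the coupling propagates for free, and everything else reduces to the class-conservation built into Figure \ref{fig:twocolor}. No delicate inequality or coupling computation is needed beyond this structural observation.
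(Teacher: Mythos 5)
Your proof is correct and follows essentially the same route as the paper's: identifying $\eta_t \geq \xi_t$ with the absence of second-class particles in $\widetilde{\xi}_t$, using the fact that the first-class particles in $\widetilde{\eta}_t$ and $\widetilde{\xi}_t$ occupy the same positions and evolve identically under the shared randomness, and deducing strictness from the persistence of a second-class particle in $\widetilde{\eta}_{t+1}$ when $\widetilde{\xi}_{t+1}$ has none to annihilate against. No further comment is needed.
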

\begin{proof}
\red{
It suffices to show that $\eta_t \geq \xi_t$ implies $\eta_{t+1} \geq \xi_{t+1}$, and $\eta_{t} > \xi_{t}$ implies $\eta_{t+1} > \xi_{t+1}$. 
\bigskip
\\
Let us first show the former part. By Definition \ref{def:coupling}, we know that  $\eta_t \geq \xi_t$ implies $\widetilde{\xi}_t = \xi_t$. This yields that $\widetilde{\xi}_t$ does not have a second-class particle, i.e. $\widetilde{\xi}_t \in \{0, 1\}^{\mathbb{Z}}$. As a consequence, $\widetilde{\xi}_{t+1} \in \{0, 1\}^{\mathbb{Z}}$, which implies that $\widetilde{\xi}_{t+1} = \xi_{t+1}$. In addition, we know that the sets of first-class particles of $\widetilde{\eta}_t$ and $\widetilde{\xi}_t$ are the same, i.e.
$\{x: \widetilde{\eta}_t (x) = 1\} = \{x: \widetilde{\xi}_t (x) = 1\}$. Since the first-class particles in $\widetilde{\eta}_t$ and $\widetilde{\xi}_t$ have the same dynamics, we know that $\{x: \widetilde{\eta}_{t+1} (x) = 1\} = \{x: \widetilde{\xi}_{t+1} (x) = 1\}$. Combining this with $\widetilde{\xi}_{t+1} = \xi_{t+1}$, we know that $\{x: \xi_{t+1}(x) = 1\} = \{x: \widetilde{\xi}_{t+1} (x) =  1\} = \{x: \widetilde{\eta}_{t+1} (x) = 1\} \subseteq \{x: \eta_{t+1} (x) = 1\}$. This implies that $\eta_{t+1} \geq \xi_{t+1}$. 
\bigskip
\\
We proceed to show the latter part. Since $\eta_{t} > \xi_t$, we know that $\widetilde{\eta}_t$ has at least a second-class particle, while $\widetilde{\xi}_t$ does not have a second class particle. As a consequence, $\widetilde{\eta}_{t+1}$ has at least a second-class particle, i.e. the set $\{x: \widetilde{\eta}_{t+1} (x) = 2\}$ is  non-empty. This implies the existence of $y \in \mathbb{Z}$ with $\widetilde{\eta}_{t+1} (y)  = 2$. By Definition \ref{def:coupling}, this yields that $\eta_{t+1} (y) = 1$ and $\xi_{t+1} (y) = 0$. By the previous paragraph, 
we already know that $\eta_{t} > \xi_t$ implies that $\eta_{t+1} \geq \xi_{t+1}$. The existence of aforementioned $y$ concludes that $\eta_{t+1} > \xi_{t+1}$. 
}
\end{proof}

\section{Translation invariant stationary distributions}
\label{sec:translationinvariant}

In this section, we show that the \red{extremal} translation invariant stationary distributions for the S6V model are given by the 
product Bernoulli measures. 

Let $\mu$ be a probability measure for a measurable space $(\Omega, \mathcal{F})$ and let $h: (\Omega, \mathcal{F}) \to (Y, \mathcal{G})$ be a measurable function. As a convention, we use $\mu \circ h^{-1}$ to denote the probability measure on $(Y, \mathcal{G})$ such that $(\mu \circ h^{-1}) (A) := \mu\big(h^{-1} (A)\big)$ for every measurable set $A \in \mathcal{G}$. 

\red{Recall the definition of $\tau$ from the paragraph before Definition \ref{def:shifted},} we define the set of translation invariant probability measures to be 
$$\mathcal{P} := \Big\{\nu \in \MM_1 (\mathcal{S}): \nu = \nu \circ \tau^{-1}\Big\}.$$
\noindent 
 
Let $\IIt \subseteq \MM_1 (\mathcal{S} \times \mathcal{S})$ denote the set of stationary distributions for the coupled S6V model $(\eta_{t}, \xi_{t})_{t \in \mathbb{Z}_{\geq 0}}$. 
Define a map  $\widetilde{\tau}: \SS \times \SS \to \SS \times \SS$ such that $\widetilde{\tau}(\eta, \xi) = (\tau(\eta), \tau(\xi))$. We set
\begin{equation*}
\widetilde{\mathcal{P}} := \Big\{\lambda \in \MM_1 (\mathcal{S} \times \mathcal{S}): \lambda = \lambda \circ \wtil{\tau}^{-1}\Big\}.
\end{equation*}

The following proposition has been proved in \cite[Corollary 3.9]{aggarwal2020limit}. We provide a self-contained argument here. The idea used in the proof is similar to \cite{aggarwal2020limit}.
\begin{prop}
	\label{prop:coramol}
	Let $\lambda \in \red{\IIt  \cap \widetilde{\mathcal{P}}}$. We have 
	\begin{equation*}
	\lambda\Big(\{(\eta, \xi): \eta > \xi\} \cup \{(\eta, \xi): \eta = \xi\} \cup \{(\eta, \xi): \eta < \xi\} \Big) = 1.
	\end{equation*}
\end{prop}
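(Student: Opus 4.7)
Suppose for contradiction that $\lambda(D) > 0$, where
\[
D := \{(\eta, \xi) : \text{there exist } x, y \in \mathbb{Z} \text{ with } \eta(x) > \xi(x) \text{ and } \eta(y) < \xi(y)\}.
\]
Call a site $x$ a $+$\emph{-discrepancy} if $\eta(x) = 1, \xi(x) = 0$, and a $-$\emph{-discrepancy} if $\eta(x) = 0, \xi(x) = 1$. Under the higher rank coupling of Definition \ref{def:coupling}, $+$-discrepancies coincide with the second-class particles of $\widetilde{\eta}$, and $-$-discrepancies with those of $\widetilde{\xi}$. By Remark \ref{rmk:seccoupled}, when a $+$ and a $-$ discrepancy occupy the same site after an update they are promoted to first-class particles, which in terms of $(\eta, \xi)$ means a $+$ and a $-$ discrepancy \emph{annihilate} together. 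Apart from such annihilations, the numbers of $+$ and $-$ discrepancies are separately conserved (they can only move between sites).

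For $L \in \mathbb{Z}_{\geq 1}$, let $N_L(t)$ be the total number of discrepancies inside $[-L, L]$ at time $t$, and let $A_L(t)$ be the number of annihilations occurring inside $[-L, L]$ during the update from $t$ to $t+1$. Conservation gives
\[
N_L(t+1) - N_L(t) = \bigl(\text{inflow across } \{-L, L\}\bigr) - \bigl(\text{outflow across } \{-L, L\}\bigr) - 2 A_L(t).
\]
A key input is that the S6V current across any edge is $0$ or $1$ per time step (each vertex has at most one horizontal input and one horizontal output), so the coupled process transports at most one particle across each boundary edge per step; hence the inflow is at most $2$. Since $N_L$ is a.s.\ bounded by $2(2L+1)$ and $\lambda$ is stationary, taking $\mathbb{E}_\lambda$ yields
\[
2 \,\mathbb{E}_\lambda[A_L(t)] = \mathbb{E}_\lambda[\text{inflow}] - \mathbb{E}_\lambda[\text{outflow}] \leq 2,
\]
so $\mathbb{E}_\lambda[A_L(t)] \leq 1$ for every $L \geq 1$.

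To derive a contradiction, I will show that $\mathbb{E}_\lambda[A_L(t)]$ grows at least linearly in $L$. Using $\lambda(D) > 0$ and translation invariance, one identifies a finite-range local event $C$, supported on a bounded window, such that $\lambda(C) =: c_0 > 0$ and such that whenever a configuration looks like $C$ around some site $x$, the one-step dynamics produce an annihilation within a fixed bounded neighborhood of $x$ with probability at least $c_1 > 0$. A natural choice for $C$ is (after a density reduction) the event that there is a $+$-discrepancy at a site $x_0$ and a $-$-discrepancy at a nearby site on the right, where the explicit update rule of Definition \ref{def:2classs6v} assigns positive probability to the two second-class particles landing on the same site. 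Summing over translates, $\mathbb{E}_\lambda[A_L(t)] \geq c_0 c_1 (2L+1)$, contradicting the bound $\leq 1$ from the previous paragraph, so $\lambda(D) = 0$ as desired. The main obstacle is the last step: extracting a finite-range productive configuration from the abstract assumption $\lambda(D) > 0$, and verifying positive one-step annihilation probability under the intricate sequential update rule of Definition \ref{def:2classs6v}; the conservation/current bookkeeping in the first two paragraphs is the clean part.
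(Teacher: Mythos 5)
Your skeleton — discrepancies as second-class particles that annihilate in $\pm$ pairs, the conservation identity for the number of discrepancies in a window, the bound of $2$ on the inflow per step coming from the $0/1$ current, and stationarity plus translation invariance forcing the expected number of annihilations per step in $[-L,L]$ to be both $\leq 1$ and linearly growing in $L$ — is exactly the paper's argument. The step you flag as "the main obstacle" is, however, precisely the one piece of real work in the paper's proof (Lemma \ref{lem:axy}), and as written your proposal does not supply it: asserting that the update rule "assigns positive probability to the two second-class particles landing on the same site" is not enough, because you need a lower bound $c_1>0$ that is \emph{uniform over all configurations} exhibiting the local pattern $C$ (the environment of first-class particles between and around the two discrepancies, and everything to their left, varies over the event and affects the sequential update).

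The paper fills this gap as follows. By countable subadditivity, $\lambda(D)>0$ forces $\lambda(A_{x,y})>0$ for some fixed $x<y$, with $A_{x,y}$ as in \eqref{eq:Axy}. Given $(\eta_0,\xi_0)\in A_{x,y}$, one locates the rightmost $\eta$-type discrepancy $\ell<y$ and the leftmost $\xi$-type discrepancy $r>\ell$; then $r-\ell\leq y-x$ and the open interval $(\ell,r)$ contains no discrepancies, only some number $n\leq y-x-1$ of first-class particles. The productive event is the intersection of: (i) no particle from the left crosses $\ell$ (probability $\geq 1-b_2$, which is what makes the bound uniform in the configuration to the left of $\ell$); (ii) all $n$ first-class particles in $(\ell,r)$ stay (probability $b_1^{n}$); (iii) the $\eta$-type second-class particle at $\ell$ wakes and jumps exactly to $r$, skipping the stayers, while the $\xi$-type particle at $r$ stays (probability $\geq(1-b_1)b_2^{r-\ell-n-1}(1-b_2)b_1$ under Definition \ref{def:2classs6v}). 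This yields the uniform bound $a_{x,y}\geq C(b_1,b_2)\min(b_1,b_2)^{y-x}>0$ of \eqref{eq:axy}. With that in hand, the paper's contradiction is run exactly as in your first two paragraphs, summing over $m$ \emph{disjoint} translates of $[x,y]$ inside a long interval (disjointness is needed so the translate events count distinct annihilations, a point your "summing over translates" glosses over but which only changes the constant). So: right strategy, but the uniform positive annihilation probability must actually be proved, and the explicit construction above is how.
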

\red{We make some preparation before proving Proposition \ref{prop:coramol}.} 

\red{For the coupled S6V model $(\eta_t, \xi_t)_{t \in \mathbb{Z}_{\geq 0}}$, if $\eta_t(x) \neq \xi_t (x)$ for some $x \in \mathbb{Z}$ and $t \in \mathbb{Z}_{\geq 0}$, we say that there is a \emph{discrepancy} at location $x$ at time $t$. An important observation is that the discrepancies travel as second-class particles in the  the auxiliary processes $(\weta_t)_{t \in \mathbb{Z}_{\geq 0}}$ and $(\wxi_t)_{t \in \mathbb{Z}_{\geq 0}}$ defined in \eqref{eq:hkcouple}. More precisely, if $\eta_t (x) \neq \xi_t (x)$, $(\eta_t (x), \xi_t (x))$ is equal to either $(1, 0)$ or $(0, 1)$.
Note that $(\eta_t (x), \xi_t (x)) = (1, 0)$ iff $\weta_t$ has a second-class particle at $x$. In this situation, the second-class particle is called \emph{$\eta$-type}. Similarly, $(\eta_t (x), \xi_t (x)) = (0, 1)$ iff $\wxi_t$ has a second-class particle at $x$. In this situation, the second-class particle is called \emph{$\xi$-type}. 
It is clear that $(\widetilde{\eta}_t)_{t \in \mathbb{Z}_{\geq 0}}$ cannot have a $\xi$-type second-class particle and $(\widetilde{\xi}_t)_{t \in \mathbb{Z}_{\geq 0}}$ cannot have an $\eta$-type second-class particle. By Definition \ref{def:coupling} and Remark \ref{rmk:seccoupled}, when an $\eta$-type second-class particle and a $\xi$-type second-class particle reach the same location at some time $T$, they both become first-class particles and will travel together afterwards. In this situation, we say that these two second-class particles (of different types) become \emph{coupled}.} 
 
\red{
Fix two arbitrary integers $x < y$, we set}
\begin{equation}\label{eq:Axy}
\red{A_{x, y} := \Big\{(\eta_{\red{0}}, \xi_{\red{0}}): \eta_{\red{0}} (x) = \xi_{\red{0}}(y) = 1,  \xi_{\red{0}} (x) =  \eta_{\red{0}}(y) = 0\Big\} \cup \Big\{(\eta_{\red{0}}, \xi_{\red{0}}):  \eta_{\red{0}} (x) = \xi_{\red{0}} (y) = 0,  \xi_{\red{0}} (x) =  \eta_{\red{0}} (y) = 1\Big\}.}
\end{equation}
\red{Note that for $(\eta_0, \xi_0) \in A_{x, y}$, $(\widetilde{\eta}_0, \widetilde{\xi}_0)$ has second-class particles of different types located at $x$ and $y$. We define the set} 
\begin{equation*}
\red{B_{x, y} := \Big\{\text{an $\eta$-type and a $\xi$-type second-class particle between $[x, y]$ become coupled at time $1$}\Big\}}
\end{equation*}
\red{and }
\begin{equation}\label{eq:axy}
\red{a_{x, y} := \inf_{(\eta_0, \xi_0) \in A_{x, y}} \mathbb{P}^{(\eta_0, \xi_0)}(B_{x, y}).}
\end{equation}
The following lemma is analogous to \cite[Lemma 3.1]{bramson2002characterization}. 
\begin{lemma}\label{lem:axy}
Fix two arbitrary integers $x < y$, we have $a_{x, y} > 0$.
\end{lemma}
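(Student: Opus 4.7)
Fix $x<y$ and consider $(\eta_0,\xi_0)\in A_{x,y}$. By symmetry, assume $\eta_0(x)=\xi_0(y)=1$ and $\eta_0(y)=\xi_0(x)=0$; write $P_\eta$ for the $\eta$-type second-class particle sitting at $x$ in $\widetilde\eta_0$ and $P_\xi$ for the $\xi$-type second-class particle sitting at $y$ in $\widetilde\xi_0$. The plan is to construct, for each such initial configuration, an event $E=E(\eta_0,\xi_0)\subseteq B_{x,y}$, determined by finitely many of the driving variables $\{\chi_0^i(w),j_0^i(w)\}$, whose probability is bounded below by a constant depending only on $b_1,b_2$ and $y-x$. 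On $E$ we will force $P_\eta$ and $P_\xi$ to both arrive at the same integer site at time $1$, so that the redefinition step of Definition \ref{def:coupling} promotes them both to first-class particles and $B_{x,y}$ occurs.

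The natural choice is to aim for the common landing site $y$. The event $E$ prescribes: $\chi_0^2(x)=0$ and $j_0^2(x)=y-x$ (wake $P_\eta$ and target its jump at $y$); $\chi_0^1(z)=0$ and $j_0^1(z)=1$ at each common-particle position $z\in(x,y)$ (so each first-class particle in the interval moves and thus is not a fixed point of the first-class update); and $\chi_0^2(y)=1$. Under these constraints, when the two-class dynamics of Definition \ref{def:2classs6v} is run, the second-class update of $P_\eta$ falls into step $2(\mathrm{ii})$, and the quantities $V,U$ computed there both equal $y$: $V=y$ because every integer in $(x,y]$ fails to be a fixed point of the first-class update (the common sites have been forced to move, and $y$ itself is not a first-class site in $\widetilde\eta$), while $U\ge y$ since no first-class site lies in $(x,y)$ of $\widetilde\eta$ that stays put. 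Analogously, condition $(\mathrm{a})$ of step $2(\mathrm{i})$ fails for $P_\xi$ (no first-class particle jumps across $y$), and condition $(\mathrm{b})$ holds since $\chi_0^2(y)=1$ and no first-class particle lands at $y$, so $P_\xi$ stays at $y$. Hence both second-class particles occupy $y$ at time $1$ and become coupled.

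The chief obstacle is to make $\mathbb{P}(E)\ge c>0$ uniformly over $(\eta_0,\xi_0)\in A_{x,y}$, because a long chain of first-class particles immediately left of $x$ could in principle drive the push-cascade of the first-class update into $[x,y]$ and derail both $V=y$ and the staying of $P_\xi$. I would handle this by exploiting the block decomposition recalled in Section \ref{sec:infinite}: augment $E$ with the condition that the first-class particle $w$ immediately to the left of $x$ in the shared common configuration (if any) satisfies $\chi_0^1(w)=0$, so that $w$ starts a new block and decouples the updates at sites $\ge w$ from the dynamics further left; and add $j_0^1(w)=1$, so that $w$ jumps to $w+1$, which lies strictly to the left of $x$ (because the next first-class site after $w$ is automatically $\ge x+1$). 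This cuts the cascade before it enters $[x,y]$ at a uniform probability cost of $(1-b_1)(1-b_2)$. All told, $E$ constrains only $O(y-x)$ of the i.i.d.\ variables in $\{\chi_0^i,j_0^i\}$, each with probability bounded below by a constant depending only on $b_1,b_2$ and $y-x$, so $\mathbb{P}(E)\ge c(b_1,b_2,y-x)>0$ uniformly in the configuration, proving $a_{x,y}>0$.
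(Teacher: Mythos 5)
Your overall strategy (prescribe a finite, uniformly-probable event on the driving variables that forces two second-class particles of opposite types onto a common site at time $1$) is the same as the paper's, but the event you construct does not do what you claim, for a reason built into Definition \ref{def:2classs6v}. You force every first-class particle in $(x,y)$ to \emph{move} one step, and then assert $U\ge y$ ``since no first-class site in $(x,y)$ stays put.'' This reverses the definition of $U$: $U$ is the initial position of the leftmost first-class particle to the right of the second-class particle that \emph{moves}, and the update sets $p_{t+1}^2(i+1)=\min(U,V,p_t^2(i+2))$. So if there is any first-class particle in $(x,y)$ and you force it to move, the $\eta$-type second-class particle is stopped at $U<y$ and never reaches $y$; your event no longer forces the pair at $x$ and $y$ to couple. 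The paper's proof conditions in the opposite direction: it forces the first-class particles in the relevant interval to \emph{stay} (cost $b_1^n$), so that no moving first-class particle contributes to $U$ there, and $V$ is computed by skipping exactly those $n$ staying particles, landing the $\eta$-type particle on the target site with $j_0^2 = r-\ell-n$. (Forcing the particles to move also creates a problem at the right end: a common particle at $y-1$ forced to move one step lands at $y$, which both caps $U$ at $y-1$ and dislodges the $\xi$-type particle sitting at $y$, since condition (b) of step 2(i) then fails.)

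A second gap: you aim the $\eta$-type particle at $x$ directly at $y$, but membership in $A_{x,y}$ says nothing about the sites strictly between $x$ and $y$; there may be further second-class particles of either type there, and second-class particles exclude one another through the $p_t^2(i+2)$ term in the same minimum, so the particle at $x$ cannot in general jump over them to reach $y$. The paper sidesteps both issues by first passing to the innermost adjacent pair of opposite-type discrepancies, $\ell=\max\{z<y:\eta_0(z)=1,\xi_0(z)=0\}$ and $r=\min\{z>\ell:\eta_0(z)=0,\xi_0(z)=1\}$, between which only common (first-class) particles and holes occur, and then coupling \emph{that} pair; this suffices because $B_{x,y}$ only asks that some pair in $[x,y]$ couple. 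Your left-boundary blocking idea is fine in spirit (the paper uses the cruder bound that a particle arriving at $\ell$ fails to pass it with probability at least $1-b_2$), but the core of the argument needs the ``stay'' conditioning and the reduction to an adjacent discrepancy pair.
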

\begin{proof}
We prove the lemma by proving a uniform lower bound for $\mathbb{P}^{\eta_0, \xi_0} (A_{x, y} > 0)$ for all $(\eta_0, \xi_0) \in A_{x, y}$. Without loss of generality, we assume that $(\eta_0, \xi_0) \in A'_{x, y}$ with
\begin{equation*}
\red{A'_{x,y} = \Big\{(\eta_0, \xi_0 ) :\eta_0 (x) = \xi_0 (y) = 1, \xi_0 (x) = \eta_0 (y) = 0\Big\}.}
\end{equation*}
Under the higher rank coupling \red{defined in Definition \ref{def:coupling}}, this event says that there is \red{an} $\eta$-type second-class particle at $x$ and a $\xi$-type second-class particle at $y$. 	

Since $(\eta_0, \xi_0) \in A_{x, y}'$, we can define 
\begin{equation*}
\ell := \max\Big\{z < y: \eta_0 (z) = 1, \xi_0 (z) = 0\Big\}, \qquad  r := \min\Big\{z > \ell: \eta_0(z) = 0, \xi_0(z) = 1\Big\}.
\end{equation*}
We know that when $t = 0$, at location $\ell$, there is \red{an} $\eta$-type second-class particle. At location $r$, there is $\xi$-type second-class particle. We label their locations at time $t$ by $X(t)$ and $Y(t)$ respectively. \red{We have} $X(0) = \ell$ and $Y(0) = r$.
\red{Note that there are no second-class particles lying in the open interval $(\ell, r)$, since $(\eta_0(k), \xi_0 (k)) \notin \{(1, 0), (0, 1)\}$ for all $k \in 
(\ell, r)$}.

We consider the evolution of the coupled two-class S6V model $(\widetilde{\eta}_t, \widetilde{\xi}_t)$ from time $0$ to time $1$. The dynamics of the model was defined in Definition \ref{def:coupling}. Let $Q_{\ell, r}^{(i)}$, \red{$i = 1, 2, 3$ denote the events}  
\begin{align*}
&Q_{\ell, r}^{(1)} := \Big\{\text{The particles on the left of $\ell$ which belong to either $\widetilde{\eta}_0$ or $\widetilde{\xi}_0$ \red{do not} \red{move across} $\ell$ \red{during the update}}\Big\}\red{,}\\
&Q_{\ell, r}^{(2)} := \Big\{\text{All the first-class particles in $(\ell, r)$ stay}\Big\}\red{,}\\
&Q_{\ell, r}^{(3)} := \Big\{X(1) = Y(1) = r\Big\}\red{.}
\end{align*}
\red{Let $Q_{\ell, r} = \cap_{i = 1}^3 Q_{\ell, r}^{(i)}$. The event $Q_{\ell, r}$ is illustrated in Figure \ref{fig:discrepancy}. It is clear that $Q_{\ell, r} \subseteq Q_{\ell, r}^{(3)} \subseteq B_{x, y}$, hence $\mathbb{P}^{(\eta_0, \xi_0)} (B_{x, y}) \geq  \mathbb{P}^{(\eta_0, \xi_0)}(Q_{\ell, r})$.  
Moreover, we have}
\begin{align}
\notag
\mbP^{(\eta_0, \xi_0)}(\Q) &= \mbP^{(\eta_0, \xi_0)} \big(\Q^{(1)}\big) \mbP^{(\eta_0, \xi_0)}\big(\Q^{(2)} \,|\, \Q^{(1)}\big) \mbP^{(\eta_0, \xi_0)} \big(\Q^{(3)} \,|\, \Q^{(1)} \cap \Q^{(2)}\big)\\
\notag
&	\geq (1-b_{\red{2}}) \cdot b_1^n \cdot (1-b_1) b_2^{r-\ell - n-1} (1-b_2) \red{b_1} \qquad 
\\
&= 
\label{eq:lowerbd}
C(b_1, b_2) \big(\min(b_1, b_2)\big)^{y-x},	
\end{align}
where $C(b_1, b_2)$ is a \red{positive} constant that only depends on $b_1$ and $b_2$.
The inequality above follows from lower bounding each of the probabilities in the product. \red{Let us explain the reason with more detail. For the first probability, we have $\mathbb{P}^{(\eta_0, \xi_0)}(Q_{\ell, r}^{(1)}) \geq 1 - b_2$ because given that a particle arrives at $\ell$, the probability that it will stop is at least $1 - b_2$. For the second probability, let the location of the $n$ first-class particles be $x_1, \dots, x_n$. We have 
\begin{equation*}
\mathbb{P}^{(\eta_0, \xi_0)}(Q_{\ell, r}^{(2)} \,|\, Q_{\ell, r}^{(1)}) = b_1^n,
\end{equation*} 
because given that the event $Q_{\ell, r}^{(1)}$ happens, the event $Q_{\ell, r}^{(2)}$ happens iff $\chi_0^1 (x_i) = 1$ (recall its definition from Definition \ref{def:coupling}) for all $i = 1, \dots, n$. 
Finally, given that $Q_{\ell, r}^{(1)} \cap Q_{\ell, r}^{(2)}$ happens, if $\chi_t^2(\ell) = 0, j_t^2(\ell) = r - \ell -n$ and $\chi_t^2(r) = 1$ holds, then $Q_{\ell, r}^{(3)}$ happens. Therefore,}  
\begin{equation*}
\red{\mathbb{P}^{(\eta_0, \xi_0)}(Q_{\ell, r}^{(3)} \,|\, Q_{\ell, r}^{(1)} \cap Q_{\ell, r}^{(2)}) \geq \mathbb{P}^{(\eta_0, \xi_0)} (\chi_t^2(\ell) = 0, j_t^2(\ell) = r - \ell -n, \chi_t^2(r) = 1) = (1-b_1) b_2^{r - \ell -  n- 1} (1-b_2) b_1.}
\end{equation*}
We have explained the reason behind \eqref{eq:lowerbd} and thus we know that \begin{equation*}
\mathbb{P}^{(\eta_0, \xi_0)}(B_{x, y}) \geq  \mathbb{P}^{(\eta_0, \xi_0)}(Q_{\ell, r}) \geq C(b_1, b_2) \big(\min(b_1, b_2)\big)^{y-x}
\end{equation*} 
for every $(\eta_0, \xi_0) \in A_{x, y}'$. This concludes the lemma.
\end{proof}
	\begin{figure}[ht]
	\centering
	\begin{tikzpicture}
	\begin{scope}
	\draw[thick] (-5, 0) -- (5, 0); 
	\foreach \x in {-5, -4, -3, -2, -1, 0, 1, 2, 3, 4}
	{
		\draw (\x+ 0.5, -0.1) -- (\x + 0.5, 0.1);
	}
	\draw[fill, blue] (-4.5, 0) circle (0.1);
	\draw[fill, red] (-2.5, 0) circle (0.1);
	\draw[fill, red] (-1.5, 0) circle (0.1);
	\draw[fill, red] (0.5, 0) circle (0.1);
	\draw[fill, red] (2.5, 0) circle (0.1);
	\node at(-4.5, -0.5) {$\ell$};
	\node at (4.5, -0.5) {$r$};
	\node at (5.5, 0) {$\widetilde{\eta}_0$};
	\draw [thick, decorate,
	decoration = {brace, mirror}] (-2.5,-0.3) --  (2.5,-0.3);
	\node at (0, -0.7) {$n$ \text{ first-class particles stay}};
	\node at (-4, 0.8) {\text{the particle moves to $r$ and no particles on its left move across $\ell$}}; 
	\draw[->] (-5, 0.5)  -- (-4.7, 0.2); 
	\end{scope}
	\begin{scope}[yshift = -2.5cm]
	\draw[thick] (-5, 0) -- (5, 0); 
	\foreach \x in {-5, -4, -3, -2, -1, 0, 1, 2, 3, 4}
	{
		\draw (\x+ 0.5, -0.1) -- (\x + 0.5, 0.1);}
	\draw[fill, blue] (4.5, 0) circle (0.1);
	\draw[fill, red] (-2.5, 0) circle (0.1);
	\draw[fill, red] (-1.5, 0) circle (0.1);
	\draw[fill, red] (0.5, 0) circle (0.1);
	\draw[fill, red] (2.5, 0) circle (0.1);
	\node at(-4.5, -0.5) {$\ell$};
	\node at (4.5, -0.5) {$r$};
	\draw [thick, decorate,
	decoration = {brace, mirror}] (-2.5,-0.3) --  (2.5,-0.3);
	\node at (0, -0.7) {$n$ \text{ first-class particles stay}};
	\node at (5.5, 0) {$\widetilde{\xi}_0$};
	\draw[->] (5, 0.5)  -- (4.7, 0.2);
	\node at (5, 0.7) {\text{the particle stays}};
	\draw[->] (-5, 0.5)  -- (-4.7, 0.2); 
	\node at (-6, 0.8) {\text{no particles move across $\ell$}}; 
	\end{scope}
	\end{tikzpicture}
	\caption{An illustration for the event $Q_{\ell, r} = \cap_{i = 1}^3 Q_{\ell, r}^{(i)}$. The red particles represent the first-class particles in the coupling while the blue particles represent the second-class particles.}
	\label{fig:discrepancy}
\end{figure}
\begin{proof}[Proof of Proposition \ref{prop:coramol}]
It suffices to show that $\lambda(A_{x, y}) = 0$ for arbitrary $x < y$.  We assume that $\lambda(A_{x, y}) > 0$ for some $x < y$ and argue by contradiction. 
We use $N_{x, y}$ to denote the number of pairs of second-class particles (of different types) in the interval $[x, y]$ at time $0$ that become coupled  at time $1$.
By Lemma \ref{lem:axy}, we have $\mathbb{P}^{\lambda}(N_{x, y} \geq 1) \geq \lambda(A_{x, y}) a_{x, y}.$	Therefore, we have $\mathbb{E}^{\red{\lambda}} [N_{x, y}] \geq \mathbb{P}^{\lambda} (N_{x, y} \geq 1) \geq \lambda(A_{x, y}) a_{x, y}.$ 
	Since $\lambda$ is translation invariant, we know that $\mathbb{E}^{\red{\lambda}} [N_{x, y}] = \mathbb{E}^{\red{\lambda}} [N_{x+k, y+k}]$. We take $k = y-x+1$ \red{so that} $x+k > y$.
	Hence, \red{we know that $[x, y+mk] \supseteq \cup_{i = 1}^m [x + ik, y+ik]$} and the preceding union is taken over disjoint intervals. Therefore, we have 
	\begin{equation*}
	\mathbb{E}^{\red{\lambda}}[N_{x, y+mk}] \geq \sum_{\red{i} = 1}^{\red{m}} \mathbb{E}^{\red{\lambda}} [N_{x + \red{i}k, y + \red{i}k}] \red{\ =\ } m \mathbb{E}^{\red{\lambda}} [N_{x,y }] \geq m \lambda(A_{x, y}) a_{x, y}. 
	\end{equation*}
	
\red{By our assumption and Lemma \ref{lem:axy}, we have $\lambda(A_{x, y}) > 0$ and $a_{x, y} > 0$}. \red{W}e \red{can} take a large $m$ such that the right hand side is larger than $\red{2}$. Hence, we have $\mathbb{E}^{\red{\lambda}}[N_{x, y+ mk}] > \red{2}$. 
\red{The previous inequality} implies that the expectation of the number of second-class particles that lie in $[x, y+mk]$ at time $0$ and \red{are} coupled 
at time $1$ is larger than $2$.
\red{Note that the change in the number of second-class particles in an interval is equal to the number of the second-class particles entering the interval subtracts the number of second-class particles that get coupled.} 
Since there are at most two second-class particle entering $[x, y+mk]$ (one $\eta$-type and one $\xi$-type) \red{when we update the coupled S6V model from time $0$ to $1$,} 
the expectation of the number of second-class particles in the $[x, y+mk]$ is strictly decreasing. 
Since the number of second-class particles of $(\wtil{\eta}_t, \wtil{\xi}_t)$ that lie in $[x, y+mk]$ equals exactly $\sum_{i = x}^{\red{y +} mk} |\eta_t(i) - \xi_t(i)|$, we get 
\begin{equation*}
\mathbb{E}^{\lambda} \Big[\sum_{i = x}^{\red{y} + mk} |\eta_0 (i) - \xi_0 (i)|\Big] > \mathbb{E}^{\lambda}\Big[\sum_{i = x}^{\red{y} + mk} |\eta_1 (i) - \xi_1 (i)|\Big].
\end{equation*}
This contradicts 
the stationarity of $\lambda$.
\end{proof}

\begin{defin}[\red{Partial order on $\mathcal{M}_1 (\SS)$}] \label{def:ordering}
\red{For two probability measures $\mu, \nu \in \MM_1 (\SS)$, we say that $\mu \leq \nu$ if there exists a $\lambda \in \MM_1 (\SS \times \SS)$ with marginals $\mu, \nu$ such that $\lambda(\{(\eta, \xi): \eta \leq \xi\}) = 1$. According to Definition 2.1 and Theorem 2.4  of \cite{Lig12} (see page 71-72 of the book), this is equivalent to saying that for any continuous function $f:\mathcal{S} \to \mathbb{R}$ satisfying $f(\eta) \leq f(\xi)$, we have $\int f d\mu \leq \int f d\nu$. Hence, ``$\leq$" is a partial order on $\mathcal{M}_1 (\SS)$. Similarly, we say that $\mu < \nu$ if there exists $\lambda \in \MM_1 (\SS \times \SS)$ with marginals $\mu, \nu$ such that $\lambda(\{(\eta, \xi): \eta < \xi\}) = 1$.}  
\end{defin}

\noindent \red{The following corollary is stated as Theorem 3.6 in \cite{aggarwal2020limit}.}
\begin{cor}\label{cor:agg}
	We have
	$(\mathcal{I} \cap \mathcal{P})_e = 
	\{\mu_\rho\}_{\rho \in [0, 1]}$.
	
\end{cor}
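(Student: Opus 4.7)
The inclusion $\{\mu_\rho\}_{\rho \in [0,1]} \subseteq (\II \cap \PP)_e$ is the easy direction. Stationarity of each $\mu_\rho$ is proved in Appendix \ref{sec:productstat}, and translation invariance is immediate. For extremality, a product Bernoulli measure is ergodic under the shift $\tau$, which makes $\mu_\rho$ an extreme point of the full simplex $\PP$ of translation invariant probability measures; a fortiori it is also extreme in the smaller convex subset $\II \cap \PP$.

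For the reverse inclusion, fix $\nu \in (\II \cap \PP)_e$ and set $\rho := \int \eta(0)\, d\nu(\eta)$. The plan is to construct a translation-invariant stationary coupling $\lambda \in \IIt \cap \PPt$ with marginals $\nu$ and $\mu_\rho$, and then apply Proposition \ref{prop:coramol} to force $\lambda$ onto the diagonal. Run the higher-rank coupling starting from the product law $\nu \otimes \mu_\rho$: for every $t$ the joint law $\lambda_t$ is translation invariant (the coupled dynamics commute with $\wtil{\tau}$) and its marginals are $\nu$ and $\mu_\rho$ (each marginal evolves as a stationary S6V in its own right). By compactness of $\MM_1(\SS \times \SS)$ in the weak topology, any subsequential limit of the Cesaro averages $T^{-1}\sum_{t=0}^{T-1}\lambda_t$ gives the desired $\lambda$.

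Proposition \ref{prop:coramol} lets us decompose $\lambda = a\lambda_1 + b\lambda_2 + c\lambda_3$ into the conditional laws on the three events $\{\eta=\xi\}$, $\{\eta<\xi\}$, $\{\eta>\xi\}$. Each of these events is translation invariant and is preserved by the higher-rank dynamics: attractivity (Lemma \ref{lem:mono}) handles $\{\eta<\xi\}$ and $\{\eta>\xi\}$, while $\{\eta=\xi\}$ is preserved because both coupled S6V's are then driven by identical noise variables in Definition \ref{def:coupling}. Consequently every $\lambda_i$ lies in $\IIt \cap \PPt$, so its two marginals lie in $\II \cap \PP$. Since both $\nu$ and $\mu_\rho$ are extremal in $\II \cap \PP$, we conclude $(\lambda_i)_\eta = \nu$ and $(\lambda_i)_\xi = \mu_\rho$ whenever the corresponding weight is positive.

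To finish, suppose $b > 0$. Under $\lambda_2$ both marginals have density $\rho$ while $\eta(0) \leq \xi(0)$ almost surely, so $\mathbb{E}^{\lambda_2}[\xi(0) - \eta(0)] = 0$ forces $\eta(0) = \xi(0)$ $\lambda_2$-a.s., and translation invariance upgrades this to $\eta = \xi$ $\lambda_2$-a.s., contradicting the support of $\lambda_2$ on $\{\eta<\xi\}$. The symmetric argument rules out $c > 0$, so $a = 1$ and $\nu = \mu_\rho$. The principal obstacle is the construction of $\lambda$ together with the verification that each piece of the decomposition inherits both stationarity and translation invariance; once that structural bookkeeping is in hand, the density argument that closes the loop is essentially formal.
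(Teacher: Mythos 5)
Your proof is correct, and while the skeleton matches the paper's (couple $\nu$ with a product Bernoulli measure, invoke Proposition \ref{prop:coramol} and attractivity to land in one of the three ordered events), the way you close the argument is genuinely different. The paper couples $\nu$ with $\mu_\rho$ for \emph{every} $\rho\in[0,1]$, obtains the trichotomy $\nu<\mu_\rho$, $\nu=\mu_\rho$ or $\nu>\mu_\rho$ for each $\rho$ by appealing to extremality of the coupling $\lambda$ itself (via Liggett's Chapter VIII, Proposition 2.14), and then concludes via the sandwich $\mu_0\leq\nu\leq\mu_1$ together with continuity of $\rho\mapsto\mu_\rho$ --- essentially an intermediate-value argument on $\rho$. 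You instead pick the single value $\rho=\int\eta(0)\,d\nu$, decompose $\lambda$ over the three forward-invariant, shift-invariant events (so that extremality is only needed for the \emph{marginals} $\nu$ and $\mu_\rho$, not for $\lambda$), and kill the off-diagonal pieces by the first-moment identity $\mathbb{E}^{\lambda_2}[\xi(0)-\eta(0)]=\rho-\rho=0$. This buys you two things: you avoid the continuity/intermediate-value step entirely, and you do not need an extremal coupling from Liggett's proposition, only a stationary translation-invariant one, which your Ces\`{a}ro construction supplies directly. The price is the bookkeeping you acknowledge: one must check that conditioning a stationary measure on a forward-invariant event of positive measure again yields a stationary measure (true, since stationarity forces the complement to be forward invariant $\lambda$-a.e. as well), and that each conditional law remains translation invariant. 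Both checks go through, so the argument is complete.
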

\begin{proof}
By Lemma \ref{lem:prodstat}, we know that $\mu_\rho \in \II$. In addition, $\mu_\rho$ is ergodic for the map $\tau$, we have $\mu \in \PP_e$ by \cite[Theorem 4.4]{einsiedler2013ergodic}. This implies that $\{\mu_\rho\}_{\rho \in [0, 1]} \subseteq (\II \cap \PP)_e$. 
We show that the reverse direction also holds. If $\nu \in (\II \cap \PP)_e$, 
for each $\rho \in [0, 1]$, by \cite[Chapter VIII Proposition 2.14]{Lig12}, there exists $\lambda \in (\IIt \cap \widetilde{\mathcal{P}})$ with marginal $\mu_\rho$ and $\nu$. Using Proposition \ref{prop:coramol}, Lemma \ref{lem:mono} and the \red{extremality} of $\lambda$, one gets that either $\nu > \mu_\rho$ or $\nu = \mu_\rho$ or $\nu < \mu_\rho$. \red{Since the product probability measures $\mu_0 \otimes \nu$ and $\nu \otimes \mu_1$ are both supported on the set $\{(\eta, \xi): \eta \leq \xi\}$, we know that $\mu_0 \leq \nu \leq \mu_1$. Using this and} the continuity of the map $\rho \to \mu_\rho$, we conclude that $\nu = \mu_\rho$ for some $\rho \in [0, 1]$. This implies that $(\mathcal{I} \cap \mathcal{P})_e \subseteq \{\mu_\rho\}_{\rho \in [0, 1]}$ and concludes $(\mathcal{I} \cap \mathcal{P})_e = \{\mu_\rho\}_{\rho \in [0, 1]}$.
\end{proof}

\section{Coupling the initial distribution with its shift}\label{sec:coupling}
In this section, we seek to prove Proposition \ref{prop:coupled}. The idea for our proof goes back to \cite[Proposition 3.2]{bramson2002characterization}. For a Markov chain $(X_t)_{\red{t \in \mathbb{Z}_{\geq 0}}}$ with initial distribution $\nu$ and semigroup $\red{(S_t)_{t \in \mathbb{Z}_{\geq 0}}}$, we use $\nu S_{\red{t}}$ to denote the probability distribution of $X_{\red{t}}$. 
\red{We denote} $\red{(\widetilde{S}_t)_{t \in \mathbb{Z}_{\geq 0}}}$ \red{ to be} the semigroup of the coupled S6V model $(\eta_t, \xi_t)_{t \in \mathbb{Z}_{\geq 0}}$. 

For a general probability measure $\mu \in \MM_1 (\SS)$, we define 
\begin{equation*}
\theta_{\red{\mu}}:= \mu \circ (\Id, \tau^{-1}),
\end{equation*}
where $\Id: \SS \to \SS$ is the identity map. In other words, $\theta_{\red{\mu}} \in \MM_1(\SS \times \SS)$ is the probability distribution of $(\eta_0, \tau(\eta_0))$, given $\eta_0 \sim \mu$.

Let $\nu \in \II$ and consider the coupled S6V model with initial distribution $\theta_\nu$.
 Since $\MM_1 (\SS \times \SS)$ is compact, any sequence of probability measures on $\SS \times \SS$ has a limit point. In particular, \red{for every fixed $\nu \in \MM_1(\SS)$}, as $n \to \infty$, a limit point of the Ces\`{a}ro average $\frac{1}{n} \sum_{i = 0}^{n-1} \theta_{\red{\nu}} \widetilde{S}_i$ exists (in the sense of weak convergence). 
\begin{prop}\label{prop:coupled}
\red{Fix $\nu \in \II$}. Let $\lambda$ be a limit point of 
$\frac{1}{n} \sum_{i = 0}^{n-1} \theta_{\red{\nu}} \widetilde{S}_i$
as $n \to \infty$.  
Then $\lambda \in \IIt$ and
\begin{equation}\label{eq:nodiscrepancy}
\lambda\Big(\{(\eta, \xi): \eta > \xi\} \cup \{(\eta, \xi): \eta = \xi\} \cup \{(\eta, \xi): \eta < \xi\}\Big) = 1.
\end{equation}
\end{prop}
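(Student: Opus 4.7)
The plan is to split the proof into two steps: stationarity of $\lambda$ under the coupled dynamics, and ruling out the coexistence of discrepancies of both types. Step one is a standard Krylov--Bogolyubov argument. Since the coupled S6V update rule is local, $\widetilde{S}_t$ is continuous on $\mathcal{M}_1(\mathcal{S}\times\mathcal{S})$ for the weak topology, and the telescoping identity
$$
\bigg(\frac{1}{n}\sum_{i=0}^{n-1}\theta_\nu \widetilde{S}_i\bigg)\widetilde{S}_1 - \frac{1}{n}\sum_{i=0}^{n-1}\theta_\nu \widetilde{S}_i = \frac{1}{n}\big(\theta_\nu \widetilde{S}_n - \theta_\nu\big)
$$
has right-hand side of total variation at most $2/n$, which vanishes in the weak limit. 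This yields $\lambda \widetilde{S}_1 = \lambda$, i.e.\ $\lambda \in \widetilde{\mathcal{I}}$.

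For Step two, the difficulty is that $\lambda$ need not be translation invariant (we did not assume $\nu \in \mathcal{P}$), so Proposition \ref{prop:coramol} does not apply to $\lambda$ directly. My plan is to pass to the spatial Ces\`aro average
$$
\hat\lambda := \lim_{j \to \infty}\frac{1}{N_j}\sum_{k=0}^{N_j-1}\lambda \circ \widetilde{\tau}^{-k},
$$
taken along a weakly convergent subsequence (which exists by compactness of $\mathcal{M}_1(\mathcal{S}\times\mathcal{S})$). The same telescoping argument shows $\hat\lambda \circ \widetilde{\tau}^{-1} = \hat\lambda$, so $\hat\lambda \in \widetilde{\mathcal{P}}$; and since the coupled dynamics commute with $\widetilde{\tau}$ and $\lambda \in \widetilde{\mathcal{I}}$, each $\lambda\circ \widetilde{\tau}^{-k}$ is stationary, whence $\hat\lambda \in \widetilde{\mathcal{I}}$ as well. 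Proposition \ref{prop:coramol} then gives $\hat\lambda(E)=0$, where $E$ is the complement of the event in \eqref{eq:nodiscrepancy}, i.e.\ the set of $(\eta,\xi)$ that admit discrepancies of both types.

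The remaining task is to transfer $\hat\lambda(E) = 0$ back to $\lambda(E) = 0$. The key point is that $E$ is shift-invariant, $\widetilde{\tau}^{-1}(E) = E$, so $(\lambda \circ \widetilde{\tau}^{-k})(E) = \lambda(E)$ for every $k$. Writing $E = \bigcup_n E_n$, where
$$
E_n := \{(\eta,\xi): \exists\, x,y \in [-n,n] \text{ with } \eta(x) > \xi(x) \text{ and } \eta(y) < \xi(y)\}
$$
is clopen and hence a $\hat\lambda$-continuity set, the inclusions $\widetilde{\tau}^{-k}(E_n) \subseteq E_{n+|k|} \subseteq E$ and $E_n \subseteq \widetilde{\tau}^{-k}(E_{n+|k|})$ together with the portmanteau theorem give $\hat\lambda(E_n) \leq \lambda(E)$ and $\hat\lambda(E_{n+|k|}) \geq \lambda(E_n)$; letting $n \to \infty$ then gives $\lambda(E) \leq \hat\lambda(E) = 0$. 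I expect this last bookkeeping to be the main technical subtlety, since $E$ itself is only $F_\sigma$ rather than clopen, so one must carefully exchange the limits in $n$ and $N_j$ while exploiting shift-invariance of $E$ to keep the bounds uniform in $k$.
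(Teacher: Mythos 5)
Your Step 1 and the construction of $\hat\lambda\in\widetilde{\II}\cap\widetilde{\PP}$ are fine, and applying Proposition \ref{prop:coramol} to $\hat\lambda$ is legitimate. The gap is in the transfer back from $\hat\lambda(E)=0$ to $\lambda(E)=0$, and it is not a bookkeeping issue: the inequality you need goes the wrong way. From $\widetilde{\tau}^{-k}(E_n)\subseteq E$ you correctly get $\hat\lambda(E_n)\leq\lambda(E)$ and hence $\hat\lambda(E)\leq\lambda(E)$, but that is the useless direction. For the other direction, the inclusion $E_n\subseteq\widetilde{\tau}^{-k}(E_{n+|k|})$ only bounds the \emph{single} term $(\lambda\circ\widetilde{\tau}^{-k})(E_{n+|k|})$ from below by $\lambda(E_n)$; it says nothing about $(\lambda\circ\widetilde{\tau}^{-k'})(E_{n+|k|})$ for the other $k'$ in the Ces\`aro sum, so it does not yield $\hat\lambda(E_{n+|k|})\geq\lambda(E_n)$. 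In fact the transfer is false as a purely measure-theoretic statement: if $\lambda$ charged configurations with a single pair of opposite-type discrepancies near the origin, then $\frac{1}{N}\sum_{k=0}^{N-1}\lambda\circ\widetilde{\tau}^{-k}$ assigns mass at most $O(n/N)$ to each clopen set $E_n$, so $\hat\lambda(E_n)=0$ for all $n$ and $\hat\lambda(E)=0$ while $\lambda(E)=1$. Spatial averaging pushes the discrepancy mass out to infinity, and $E$ being open but not closed is exactly what lets the weak limit lose it. Shift-invariance of $E$ gives $(\lambda\circ\widetilde{\tau}^{-k})(E)=\lambda(E)$ for each $k$, but weak convergence does not preserve the measure of $E$.

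To rule out such a $\lambda$ you must use the dynamics quantitatively, which is what the paper does and your argument omits entirely. The paper's route is: (i) show that the expected number of discrepancies present in $[-Mn,Mn]$ at time $\lfloor\sqrt n\rfloor$ is $o(n)$ (Lemma \ref{lem:discrepanciesnb1}, which itself needs Lemma \ref{lem:phi} and the decomposition of spatial averages of $\nu$ into the $\mu_\rho$); (ii) bound by $o(n)$ the number of discrepancies entering $[-n,n]$ from far away during $[\lfloor\sqrt n\rfloor,n]$ using the speed bound on second-class particles (Lemma \ref{lem:discrepanciesnb2}); hence only $o(n)$ discrepancies visit $[-n,n]$ in that time window (Proposition \ref{prop:liggettineq}). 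Then, if $\lambda(A_{x,y})>0$, the uniform annihilation bound $a_{x,y}>0$ of Lemma \ref{lem:axy} forces a number of annihilations of order $n\,\lambda(A_{x,y})a_{x,y}$ over the times $\lfloor\sqrt n\rfloor,\dots,n-1$, contradicting the $o(n)$ visit count. Your proposal replaces this entire mechanism with a soft weak-convergence argument that cannot work; you would need to reintroduce essentially all of the paper's Section \ref{sec:coupling} to close the gap.
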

\red{
\begin{cor}\label{cor:shift}
Fix $\nu \in \mathcal{I}_e$. Then we have either  $\nu > \nu \circ \tau^{-1}$, $\nu < \nu \circ \tau^{-1}$ or $\nu = \nu \circ \tau^{-1}$.
\end{cor}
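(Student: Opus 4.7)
The plan is to apply Proposition \ref{prop:coupled} with the given $\nu \in \mathcal{I}_e$ to obtain a stationary coupling $\lambda$ of $\nu$ with its shift $\nu \circ \tau^{-1}$ that puts no mass on bidirectional discrepancies, and then leverage the extremality of $\nu$ together with the invariance of the ordered sets under the coupled dynamics to force $\lambda$ to concentrate on exactly one of $\{\eta > \xi\}$, $\{\eta = \xi\}$, or $\{\eta < \xi\}$.

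First I would note that since the S6V dynamics commutes with $\tau$, the shift $\nu \circ \tau^{-1}$ also lies in $\mathcal{I}_e$, and the coupling $\theta_\nu$ has marginals $\nu$ and $\nu \circ \tau^{-1}$. Pick any limit point $\lambda$ of $\frac{1}{n}\sum_{i=0}^{n-1} \theta_\nu \widetilde{S}_i$. Proposition \ref{prop:coupled} gives $\lambda \in \widetilde{\mathcal{I}}$ and that $\lambda$ is concentrated on $\{\eta > \xi\} \cup \{\eta = \xi\} \cup \{\eta < \xi\}$. Because each iterate $\theta_\nu \widetilde{S}_i$ keeps its two marginals equal to the S6V-stationary measures $\nu$ and $\nu \circ \tau^{-1}$, so does the Ces\`aro average, and hence so does $\lambda$.

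Next, Lemma \ref{lem:mono} says that each of the three sets $\{\eta > \xi\}$, $\{\eta = \xi\}$, $\{\eta < \xi\}$ is invariant under the coupled dynamics. I would decompose $\lambda = p_1 \lambda_1 + p_2 \lambda_2 + p_3 \lambda_3$ as the normalized restrictions to these three disjoint sets. Each $\lambda_i$ remains stationary for the coupled process, so its marginals $\mu_i, \mu_i' \in \mathcal{I}$. Projecting gives
\begin{equation*}
\nu = p_1 \mu_1 + p_2 \mu_2 + p_3 \mu_3, \qquad \nu \circ \tau^{-1} = p_1 \mu_1' + p_2 \mu_2' + p_3 \mu_3'.
\end{equation*}
By extremality of $\nu$ and of $\nu \circ \tau^{-1}$, whenever $p_i > 0$ we must have $\mu_i = \nu$ and $\mu_i' = \nu \circ \tau^{-1}$. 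Thus every $\lambda_i$ with $p_i > 0$ is a coupling of $\nu$ and $\nu \circ \tau^{-1}$ supported on its respective ordered set, which by Definition \ref{def:ordering} directly witnesses one of $\nu > \nu \circ \tau^{-1}$, $\nu = \nu \circ \tau^{-1}$, or $\nu < \nu \circ \tau^{-1}$.

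The final step, and what I expect to be the main subtlety, is ruling out the possibility that two of the $p_i$'s are simultaneously positive. The three witnesses are pairwise incompatible: for instance, having both $p_1 > 0$ and $p_2 > 0$ would yield at once $\nu > \nu \circ \tau^{-1}$ and $\nu = \nu \circ \tau^{-1}$; but the identity of marginals forces $\mathbb{E}[\eta(x) - \xi(x)] = 0$ coordinatewise under the coupling coming from $\lambda_1$, and combined with $\eta \geq \xi$ almost surely this forces $\eta(x) = \xi(x)$ almost surely for every $x \in \mathbb{Z}$, contradicting $\eta \neq \xi$ almost surely. The analogous argument excludes the pairs $\{p_1, p_3\}$ and $\{p_2, p_3\}$, so exactly one of the $p_i$ equals $1$ and the required trichotomy follows. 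The heavy lifting is done by Proposition \ref{prop:coupled}; once bidirectional discrepancies are eliminated, the rest is extremality bookkeeping combined with the elementary coupling-level identities above.
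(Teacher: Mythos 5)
Your proof is correct and follows essentially the same route as the paper's, which compresses the whole argument into a single sentence citing Proposition \ref{prop:coupled}, Lemma \ref{lem:mono} and extremality; your decomposition of $\lambda$ into its normalized restrictions to the three forward-invariant ordered sets is exactly the intended extremality bookkeeping. Note only that your final step ruling out two of the $p_i$ being simultaneously positive is not needed (the corollary asserts just that at least one relation holds), and that the ``identity of marginals'' invoked there is not automatic --- the marginals of $\lambda_1$ are $\nu$ and $\nu\circ\tau^{-1}$, and they coincide only after one first uses $p_2>0$ to conclude $\nu=\nu\circ\tau^{-1}$.
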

\begin{proof}[Proof of Corollary \ref{cor:shift}]
Since $\nu \in \II$ and the S6V model is translation invariant, we know that $\nu \circ \tau^{-1} \in \II$. This implies that the marginals of $\lambda$ in Proposition \ref{prop:coupled} \red{are} given by $\nu$ and $\nu \circ \tau^{-1}$. \red{S}ince $\nu, \nu \circ \tau^{-1} \in \II_e$, by Proposition \ref{prop:coupled} and Lemma \ref{lem:mono}, we know that either $\nu > \nu \circ \tau^{-1}$, $\nu < \nu \circ \tau^{-1}$ or $\nu = \nu \circ \tau^{-1}$.	
\end{proof}
}

\red{The rest of the section is devoted to the proof of Proposition \ref{prop:coupled}. }

\red{Recall that we say that there is a discrepancy at  $x$ at time $t$ if $\eta_t (x) \neq \xi_t (x)$. We call it \red{an} $\eta$-type discrepancy if $\eta_t (x) > \xi_t (x)$ and a $\xi$-type discrepancy if $\eta_t(x) < \xi_t (x)$. Referring to the coupling in Definition \ref{def:coupling}, the discrepancies travel as second-class particles.  Note that the relation $\eta_t (x) > \xi_t (x)$ corresponds to an $\eta$-type second-class particle in $\widetilde{\eta}_t$ at location $x$ (since $\widetilde{\eta}_t (x) = 2$.). Similarly, the relation $\eta_t (x) < \xi_t (x)$ corresponds to a $\xi$-type second-class particle in $\widetilde{\xi}_t$ at location $x$.
When \red{an} $\eta$-type discrepancy and a $\xi$-type discrepancy arrive at the location at some time $T$, they disappear (since as second-class particles of different types, they become coupled).} 
 
\red{We define}
\begin{equation*}
\red{\phi^\mu (t) := \mathbb{P}^{\theta_\mu} \big(\eta_t (0) \neq \xi_t (0)\big).}
\end{equation*}  
\begin{lemma}\label{lem:phi}
\red{Recall that we denote $\mu_\rho$ to be the product Bernoulli measure with density $\rho$.} We consider the coupled S6V model $(\eta_t, \xi_t)$ that starts from the initial distribution $\theta_{\red{\mu_\rho}} = \mu_{\rho} \circ (\Id, \tau^{-1})$. 
Then $\lim_{t \to \infty} \phi^{\red{\mu_\rho}} (t) = 0$ uniformly for $\rho \in [0, 1]$.
\end{lemma}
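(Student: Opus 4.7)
The plan is to combine a monotonicity property for $\phi^{\mu_\rho}(t)$ with the Ces\`aro-average argument underlying Proposition \ref{prop:coupled}, and then upgrade pointwise convergence to uniform convergence on $[0,1]$ via Dini's theorem.

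The first step is to show that $t \mapsto \phi^{\mu_\rho}(t)$ is non-increasing. Since $\theta_{\mu_\rho} \in \widetilde{\PP}$ and the coupled dynamics preserves translation invariance, $\theta_{\mu_\rho} \widetilde{S}_t \in \widetilde{\PP}$ for every $t$, so $\phi^{\mu_\rho}(t)$ equals the asymptotic density of discrepancies at time $t$. In the coupling of Definition \ref{def:coupling}, an $\eta$-type discrepancy (a second-class particle of $\widetilde{\eta}_t$) disappears only by becoming coupled with a $\xi$-type discrepancy. Since S6V dynamics only moves particles rightward, translation invariance at time $t$ forces the expected entry and exit fluxes of $\eta$-type discrepancies across the boundaries of $[-N,N]$ to cancel, so the expected number of $\eta$-type discrepancies in $[-N,N]$ can only decrease. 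Dividing by $2N+1$ and adding the analogous fact for $\xi$-type discrepancies yields $\phi^{\mu_\rho}(t+1) \le \phi^{\mu_\rho}(t)$.

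The second step reuses the Ces\`aro framework of Proposition \ref{prop:coupled} with $\nu = \mu_\rho$. Any subsequential weak limit $\lambda$ of $\frac{1}{n}\sum_{i=0}^{n-1}\theta_{\mu_\rho}\widetilde{S}_i$ lies in $\widetilde{\II} \cap \widetilde{\PP}$, and both marginals of $\lambda$ are $\mu_\rho$ since $\mu_\rho$ is stationary for the S6V model (Lemma \ref{lem:prodstat}). Proposition \ref{prop:coramol} then gives $\lambda(\{\eta \ge \xi\} \cup \{\eta \le \xi\}) = 1$, and the equality of marginals combined with $\mathbb{E}^\lambda[\eta(0) - \xi(0)] = 0$ forces $\lambda(\{\eta=\xi\}) = 1$. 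Since the cylinder event $\{\eta(0) \ne \xi(0)\}$ is clopen in $\SS \times \SS$, weak convergence along the subsequence yields $\frac{1}{n}\sum_{i=0}^{n-1}\phi^{\mu_\rho}(i) \to 0$, and combined with the monotonicity from the first step this implies $\phi^{\mu_\rho}(t) \to 0$ as $t \to \infty$ for each $\rho$.

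Finally, for uniformity in $\rho$ I would invoke Dini's theorem on the compact interval $[0,1]$. The map $\rho \mapsto \mu_\rho$ is weakly continuous, hence so is $\rho \mapsto \theta_{\mu_\rho}\widetilde{S}_t$ by the Feller property of the coupled chain, and integrating the continuous indicator of the clopen set $\{\eta(0)\ne\xi(0)\}$ makes $\rho \mapsto \phi^{\mu_\rho}(t)$ continuous. Monotone pointwise convergence of continuous functions to the continuous limit $0$ on a compact set is uniform by Dini. The main obstacle is making the flux-balance argument in the monotonicity step precise; once monotonicity is secured, the remainder is a direct application of the Ces\`aro framework of Proposition \ref{prop:coupled} together with the rigidity supplied by Proposition \ref{prop:coramol}.
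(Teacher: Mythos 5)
Your proposal is correct and follows essentially the same architecture as the paper's proof: establish monotonicity of $t \mapsto \phi^{\mu_\rho}(t)$, use the Ces\`aro limit point together with Proposition \ref{prop:coramol} and the equality of marginals to force $\lambda_\rho(\{\eta=\xi\})=1$, and upgrade to uniformity via Dini. The one place you diverge is the monotonicity step, which you also flag as your main obstacle: you propose an exact flux-balance argument (expected influx across $-N$ equals expected outflux across $N$ by translation invariance, so annihilation makes the expected count non-increasing). The paper avoids needing exact flux cancellation: it first applies the Birkhoff ergodic theorem to identify $\phi^{\mu_\rho}(t)$ with the almost-sure spatial density $\lim_n \frac{1}{2n+1}\sum_{i=-n}^{n}\mathbbm{1}_{\{\eta_t(i)\neq\xi_t(i)\}}$, then uses only the crude bound that at most two discrepancies (one of each type) can enter $[-n,n]$ per update since the current is at most one, so the count increases by at most $2$; this additive constant vanishes upon dividing by $2n+1$, giving $\phi^{\mu_\rho}(t+1)\le\phi^{\mu_\rho}(t)$ with no bookkeeping of exits or annihilations. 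Your flux-balance route can be made rigorous (the per-edge discrepancy current is bounded by $1$, hence integrable, and translation covariant), but the paper's Birkhoff argument disposes of the obstacle you identify more cheaply; the rest of your argument matches the paper's.
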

\begin{proof}
Since $\mu_\rho$ is ergodic for the shift operator $\tau$, hence $\red{\theta}_{\red{\mu_\rho}}$ is ergodic under $\widetilde{\tau}$. Recall the higher rank coupling defined in Definition \ref{def:coupling}, for all time $t \in 
\mathbb{Z}_{\geq 0}$, $(\eta_t, \xi_t)$ can be expressed as a function of $(\eta_0, \xi_0)$ and $\{\chi^i_s (x), j^i_s (x)\}_{s \in \{0, \dots, t-1\},  i \in \{1, 2\}, x \in \mathbb{Z}}$. Since $\{\chi_s^i(x)\}$ and $\{j^i_s (x)\}$ are two groups of i.i.d. random variables, the probability distribution of $(\eta_t, \xi_t)$ is ergodic for the operator $\widetilde{\tau}$.
By \red{Birkhoff} ergodic theorem \red{\cite[Theorem 6.2.1]{durrett2019probability}}, for any $t \in \mathbb{Z}_{\geq 0}$,
\begin{equation}\label{eq:phi}
\phi^{\red{\mu_\rho}} (t) = \lim_{n \to \infty} \frac{1}{2n+1} \sum_{i = -n}^n \mathbbm{1}_{\{\eta_t (i) \neq \xi_t (i)\}}, \qquad \mathbb{P}^{\red{\theta}_{\red{\mu_\rho}}} \text{ a.e.}.
\end{equation}
Since the location of discrepancies $\{i:\eta_t (i) \neq \xi_t (i)\}$ travel as second-class particles in the two-class S6V model $\widetilde{\eta}_t$ and $\widetilde{\xi}_t$ during the update from time $t$ to $t+1$, there can be at most two discrepancies (one $\eta$-type and one $\xi$-type) entering $[-n, n]$. Hence, we have, for all $n$ and $t \in \mathbb{Z}_{\geq 0}$,
\begin{equation}\label{eq:temp1}
\sum_{i = -n}^{\red{n}} \mathbbm{1}_{\{\eta_{t+1} (i) \neq \xi_{t+1} (i)\}} \leq \sum_{i = -n}^{\red{n}} \mathbbm{1}_{\{\eta_{t} (i) \neq \xi_{t} (i)\}} + 2. 
\end{equation}
Using this together with \eqref{eq:phi}, we know that $\phi^\rho (t)$ is non-increasing in $t$. 

We proceed to conclude the lemma. Let $\lambda_\rho$ be a limit point of the Ces\`{a}ro average $\frac{1}{n} \sum_{i = 0}^{n-1} \red{\theta_{\mu_\rho}} \widetilde{S}_i$ as $n \to \infty$. \red{This means that we can find an increasing sequence of positive integers $\{n_k\}_{k = 1}^\infty$ such that $\lambda_\rho  = \lim_{k \to \infty} \frac{1}{n_k} \sum_{i = 0}^{n_k - 1} \theta_{\red{\mu_\rho}} \widetilde{S}_i$. This implies that $\lambda_\rho \widetilde{S}_1 = \lim_{k \to \infty} \frac{1}{n_k} \sum_{i = 1}^{n_k} \theta_{\red{\mu_\rho}} \widetilde{S}_i$ and hence 
$$\lambda_\rho \widetilde{S}_1 - \lambda_\rho = \lim_{k \to \infty }\frac{1}{n_k} (\theta_{\red{\mu_\rho}} \widetilde{S}_{n_k} - \theta_{\red{\mu_\rho}}) = 0.$$   
This shows that $\lambda_\rho \in \widetilde{\II}$. In addition, the probability distribution $\theta_{\red{\mu_\rho}}$ is translation invariant under $\widetilde{\tau}$ and so is the coupled S6V model. This implies that $\theta_{\red{\mu_\rho}} \widetilde{S}_i$ is translation invariant under $\widetilde{\tau}$ and so is $\lambda_\rho$. Therefore, we have $\lambda_\rho \in \widetilde{\II} \cap \widetilde{\mathcal{P}}$.}  
By Proposition \ref{prop:coramol}, we know that $\lambda_\rho \big(\{(\eta, \xi): \eta < \xi\} \cup \{(\eta, \xi): \eta = \xi\} \cup \{(\eta, \xi): \eta > \xi\}\big) = 1$ and both of the marginal distributions of $\lambda_\rho$ are $\mu_\rho$. Using this 
we conclude that $\lambda_\rho\big(\{(\eta, \xi): \eta = \xi\}\big) = 1$.
As a consequence, $\frac{1}{n} \sum_{i = 0}^{n-1} \phi^{\red{\mu_\rho}} (i)$ converges to $0$ as $n \to \infty$. Since $\phi^{\red{\mu_\rho}} (i)$ is \red{non-increasing} in $i$, we have $\lim_{i \to \infty}\phi^{\red{\mu_\rho}} (i) = 0$. Furthermore, note that $\phi^{\red{\mu_\rho}} (i)$ is continuous in $\rho$ for every fixed $i \geq 0$, thus the uniform convergence $\lim_{t \to \infty} \phi^{\red{\mu_\rho}} (t) = 0$ in $\rho$ follows from Dini's theorem. 
\end{proof}

We use $\#A$ to denote the number of elements in a set $A$. 
\begin{lemma}\label{lem:discrepanciesnb1}
Fix $\nu \in \II$. 
Let $(\eta_t, \xi_t)_{t \in \mathbb{Z}_{\geq 0}}$ be the coupled S6V model with initial distribution $\theta_{\red{\nu}}$. 
For any fixed $M > \red{0}$, we have
\begin{equation}\label{eq:temp}
\lim_{n \to \infty} \frac{1}{n} \mathbb{E}^{\theta_{\red{\nu}}}\Big[\#\{x \in [-M n, Mn] \cap \mathbb{Z}: \eta_{\lfloor \sqrt{n} \rfloor} (x) \neq \xi_{\lfloor \sqrt{n} \rfloor} (x)\}\Big] = 0.
\end{equation}
\red{In other words, the expectation of the  total number of discrepancies that lie in $[-Mn, Mn]$ at time $\lfloor \sqrt{n} \rfloor$ for the coupled S6V model with initial probability distribution $\theta_\nu$ is $o(n)$.  }
\end{lemma}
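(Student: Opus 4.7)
My approach is to reduce the statement to the Bernoulli case already handled by Lemma~\ref{lem:phi}, using a triangle inequality applied to a four-process coupling that passes through a product Bernoulli reference. Fix a density parameter $\rho \in [0,1]$ to be chosen later, and construct a joint coupling $(\eta_t, \bar{\eta}_t, \bar{\xi}_t, \xi_t)_{t\in \mathbb{Z}_{\geq 0}}$ of four S6V processes with marginals $\nu$, $\mu_\rho$, $\mu_\rho$, $\nu \circ \tau^{-1}$, where $\bar{\xi}_0 = \tau(\bar{\eta}_0)$ and $\xi_0 = \tau(\eta_0)$. The coupling is built by iterating the higher rank coupling of Definition~\ref{def:coupling} to consecutive pairs, all using a single family of update random variables, so that each pairwise difference evolves as the collection of second-class particles of a two-class S6V model (which can only annihilate in pairs of opposite types inside any region).

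For every $x$ and $t$ we apply the pointwise triangle inequality
\begin{equation*}
\mathbbm{1}_{\eta_t(x) \neq \xi_t(x)} \,\leq\, \mathbbm{1}_{\eta_t(x) \neq \bar{\eta}_t(x)} + \mathbbm{1}_{\bar{\eta}_t(x) \neq \bar{\xi}_t(x)} + \mathbbm{1}_{\bar{\xi}_t(x) \neq \xi_t(x)},
\end{equation*}
sum over $x \in [-Mn, Mn]$, and take expectation. The middle sum, by translation invariance of $\mu_\rho$, equals $(2Mn+1)\phi^{\mu_\rho}(\lfloor \sqrt{n}\rfloor)$; Lemma~\ref{lem:phi} gives $\phi^{\mu_\rho}(\lfloor \sqrt{n} \rfloor) \to 0$ uniformly in $\rho$, so the middle term is $o(n)$ after dividing by $n$. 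For each of the two outer sums, the pairwise discrepancy count in $[-Mn, Mn]$ is non-increasing up to the inflow across the two boundary edges; since the horizontal current at any vertex is either $0$ or $1$, at most one discrepancy enters through each boundary per time step, so
\begin{equation*}
\mathbb{E}\Big[\sum_{x \in [-Mn, Mn]} \mathbbm{1}_{\eta_{\lfloor \sqrt n\rfloor}(x) \neq \bar{\eta}_{\lfloor\sqrt n\rfloor}(x)}\Big] \,\leq\, \mathbb{E}\Big[\sum_{x \in [-Mn, Mn]} \mathbbm{1}_{\eta_0(x) \neq \bar{\eta}_0(x)}\Big] + 2\lfloor\sqrt n\rfloor,
\end{equation*}
and analogously for the $(\bar{\xi}, \xi)$ pair.

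The remaining and crucial ingredient is to choose $\rho$ together with the initial joint law of $(\eta_0, \bar{\eta}_0)$ so that the initial expected Hamming distance on $[-Mn, Mn]$ is $o(n)$. This cannot be done with a single fixed $\rho$ if $\nu$ has spatially varying density, so I will partition $[-Mn, Mn]$ into consecutive blocks of intermediate length $L = L(n)$ with $L \to \infty$ and $L/n \to 0$, and on each block use a block-dependent product Bernoulli whose density is the empirical density of $\eta_0$ on that block. Because $\nu$ is stationary, by Corollary~\ref{cor:agg} any translation-invariant stationary measure is a mixture of product Bernoullis; the block-averaged (shift-averaged) version of $\nu$ inherits this structure, so by a local comparison with a matched product Bernoulli the expected within-block initial Hamming distance is $o(L)$, and summing over the $O(n/L)$ blocks yields $o(n)$ globally.

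The main obstacle is this last step: producing an explicit initial coupling whose Hamming distance is $o(n)$ in expectation for a general (not necessarily translation-invariant) stationary $\nu$. The combinatorial bound above reduces everything to this static estimate, and I expect it to require a careful block-decomposition argument combining stationarity with the extremal characterization from Corollary~\ref{cor:agg}, possibly together with a sub-additive / weak-limit argument along shifts of $\theta_\nu$ in the spirit of Proposition~\ref{prop:coupled}. Once this initial Hamming bound is in hand, the triangle-inequality framework above delivers the claim.
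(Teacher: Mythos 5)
Your triangle-inequality framework is fine as far as it goes, but the step you yourself flag as the ``crucial ingredient'' --- an initial coupling of $\eta_0\sim\nu$ with a (blockwise) product Bernoulli $\bar\eta_0$ whose expected Hamming distance on $[-Mn,Mn]$ is $o(n)$ --- is a genuine gap, and I do not see how to close it along the lines you sketch. The Hamming distance is positionwise, so even if you match the empirical density of each block, two configurations with the same particle count on a block of length $L$ generically differ at $\Theta(L)$ sites unless the coupling is essentially the identity on that block; but the identity coupling forces the marginal of $\bar\eta_0$ to be the block marginal of $\nu$ rather than a product Bernoulli, and then Lemma~\ref{lem:phi} no longer applies to the middle pair. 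Producing a coupling with $o(n)$ Hamming distance would require the finite-dimensional marginals of $\nu$ to be close in a very strong sense to those of a Bernoulli mixture --- which is essentially the conclusion of Theorem~\ref{thm:st}, not something available at this stage (recall $\nu$ is not assumed translation invariant, so Corollary~\ref{cor:agg} does not apply to $\nu$ itself, and applying it to the shift-average of $\nu$ tells you nothing about couplings of $\nu$ at time $0$). Your appeal to a mixture $\nu=\frac12(\mu_{1/4}+\mu_{3/4})$-type decomposition also runs into the problem that the block densities are random, so $\bar\eta_0$ is not a fixed $\mu_\rho$.

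The paper's proof avoids any time-zero coupling with a Bernoulli. It argues by contradiction and works with the \emph{spatially averaged} discrepancy density: by translation covariance of the dynamics, $\frac{1}{2n_k+1}\mathbb{E}^{\theta_\nu}\big[\#\{x\in[-n_k,n_k]:\eta_t(x)\neq\xi_t(x)\}\big]$ equals the Ces\`aro average $\frac{1}{2n_k+1}\sum_{i=-n_k}^{n_k}\mathbb{P}^{\theta_{\nu\circ\tau^{-i}}}(\eta_t(0)\neq\xi_t(0))$, which for each fixed $t$ converges along a subsequence to $\phi^{\mu}(t)$ for a weak limit $\mu$ of the shift-averages of $\nu$. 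That $\mu$ lies in $\II\cap\PP$, so Choquet plus Corollary~\ref{cor:agg} give $\mu=\int\mu_\rho\,\zeta(d\rho)$, and the uniform convergence in Lemma~\ref{lem:phi} yields $\phi^\mu(t)\to 0$; one then fixes $t_0$ with $\phi^\mu(t_0)$ small and propagates to time $\lfloor\sqrt{n_k}\rfloor$ using the same ``at most two discrepancies enter per step'' bound you use. In other words, the Bernoulli decomposition is invoked only at the level of the limiting measure $\mu$, never as a coupling with $\nu$ at time $0$. Your combinatorial bookkeeping (monotonicity of the discrepancy count up to boundary inflow) is the same as the paper's, but without the averaging-and-weak-limit step the argument does not go through.
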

\begin{proof}
\red{\red{We take $M = 1$ for simplicity and the proof for other value of $M > 0$ is rather similar. We argue by contradiction. If $\eqref{eq:temp}$ does not hold, then that there exists $\e > 0$ and an increasing sequence of integers $\{n_k\}_{k = 1}^\infty$ satisfying $\lim_{k \to \infty} n_k = \infty$ and} 
\begin{equation}\label{eq:assumption}
\red{\liminf_{k \to \infty} \frac{1}{n_k} \mathbb{E}^{\theta_{\red{\nu}}}\Big[\#\{x \in [-n_k, n_k] \cap \mathbb{Z}: \eta_{\lfloor \sqrt{n_k} \rfloor} (x) \neq \xi_{\lfloor \sqrt{n_k} \rfloor} (x)\}\Big] \geq \e.}
\end{equation} 
In the following, we will derive a contradiction. 
} 

\red{L}et $\mu$ be \red{an arbitrary} limit point of $\frac{1}{2n_k +1} \sum_{i = -n_k}^{n_k} \nu \circ \tau^{-i}$ as $k \to \infty$. \red{Without loss of generality, we can assume that $\mu = \lim_{k \to \infty} \frac{1}{2n_k + 1} \sum_{i = -n_k}^{n_k} \nu \circ \tau^{-i}$ (if not, just take one more subsequence, and the proof won't change).} \red{Since $\nu \in \II$, one readily checks that} $\mu \in \II \cap \mathcal{P}$. 
By Choquet's representation \cite[Section 3]{phelps2001lectures},  every element of $\II \cap \PP$ has an integral representations in terms of el\red{e}ments of $(\II \cap \PP)_e = \{\mu_\rho\}_{\rho \in [0, 1]}$. Hence, 
there exists a probability measure $\zeta$ on $[0, 1]$ \red{such that $\mu = \int_0^1 \mu_\rho \zeta(d\rho)$.}
Hence, \red{$\theta_{\mu} = \int_0^1 \theta_{\red{\mu_\rho}} \zeta(d\rho)$} and we have  
\begin{equation*}
\phi^\mu (t) = \int_0^1 \phi^{\red{\mu_\rho}} (t) \zeta(d\rho).
\end{equation*}
\red{By Lemma \ref{lem:phi}, we know that $\lim_{t \to \infty} \phi^{\red{\mu_\rho}} (t) = 0$ uniformly for $\rho \in [0,1 ]$. Using this together with the displayed equation above,} 
we have $\lim_{t \to \infty} \phi^\mu(t) = 0$. 
\red{Since $\mu = \lim_{k \to \infty} \frac{1}{2n_k + 1} \sum_{i = -n_k}^{n_k} \nu \circ \tau^{-i}$, we know that} 
\begin{equation*}
\red{\phi^\mu(t) = \lim_{n \to \infty} \frac{1}{2n_k+1} \sum_{i = -n_k}^{n_k} \mathbb{P}^{\theta_{\nu \circ \tau^{-i}}} \big(\eta_t (0) \neq \xi_t (0)\big) =   
\lim_{k \to \infty} \frac{1}{2n_k+1} \mathbb{E}^{\theta_{\red{\nu}}}\Big[\#\{x \in [-n_k, n_k] \cap \mathbb{Z}: \eta_{t} (x) \neq \xi_{t} (x)\}\Big]. }
\end{equation*}
\red{Since $\lim_{t \to \infty} \phi^\mu (t) = 0$. There exists a $t_0$ such that} 
\begin{equation*}
\red{\lim_{k \to \infty} \frac{1}{n_k} \mathbb{E}^{\theta_{\red{\nu}}}\Big[\#\{x \in [-n_k, n_k] \cap \mathbb{Z}: \eta_{t_0} (x) \neq \xi_{t_0} (x)\}\Big] = \phi^{\mu} (t_0) \leq \frac{\e}{4}.}
\end{equation*} 
\red{Hence, there exists $N$ such that for all $n_k > N$,}
\begin{equation}\label{eq:temp2}
\red{\mathbb{E}^{\theta_{\red{\nu}}}\Big[\#\{x \in [-n_k, n_k] \cap \mathbb{Z}: \eta_{t_0} (x) \neq \xi_{t_0} (x)\}\Big] \leq \frac{\e}{2} n_k.} 
\end{equation}
Note that by repeatedly applying the inequality \eqref{eq:temp1} with $t = t_0, \dots, \lfloor\sqrt{n_k} \rfloor - 1$ and setting $n = n_k$ therein, we know that  
\begin{equation*}
\red{\sum_{i = -n_k}^{n_k} \mathbbm{1}_{\{\eta_{\lfloor \sqrt{n_k} \rfloor} (i) \neq \xi_{\lfloor \sqrt{n_k} \rfloor} (i)\}} \leq \sum_{i = -n_k}^{n_k} \mathbbm{1}_{\{\eta_{t_0} (i) \neq \xi_{t_0} (i)\}} + 2(\lfloor \sqrt{n_k} \rfloor -  t_0).} 
\end{equation*}
We take the expectation of both sides with respect to the initial probability distribution $\theta_\nu$ and then upper bound the first expectation on the right hand side using \eqref{eq:temp2} and finally divide both sides by $n_k$, this yields 
\begin{equation*}
\red{\frac{1}{n_k} \mathbb{E}^{\theta_{\red{\nu}}}\Big[\#\{x \in [-n_k, n_k] \cap \mathbb{Z}: \eta_{\lfloor \sqrt{n_k} \rfloor} (x) \neq \xi_{\lfloor \sqrt{n_k} \rfloor} (x)\}\Big] \leq \frac{\e}{2} + \frac{2( \lfloor \sqrt{n_k} \rfloor - t_0)}{n_k}.}
\end{equation*}
\red{By taking $n_k$ large enough so that the right hand side above is upper bounded by $\frac{3}{4} \e$, we see that this is in contradiction to \eqref{eq:assumption}.}
\end{proof} 
The following lemma is from \cite[Lemma 2.10]{aggarwal2020limit}, which controls the speed of the second-class particles. 
\begin{lemma}\label{lem:secondclass}
For the two-class S6V model and $k \in [-M_2, N_2]$, we have for $r \in \mathbb{Z}_{\geq 1}$,
\begin{equation*}
\mathbb{P}\Big(p_{t+1}^{2}(k) - p_t^{2} (k) \geq r\Big) \leq \max(b_1, b_2)^{r-1}
\end{equation*}	
\end{lemma}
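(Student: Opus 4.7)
The approach is to analyze the explicit update rule from Definition \ref{def:2classs6v} conditional on the configuration $\mathfrak{p}_t$ at time $t$. Set $D := p_{t+1}^2(k) - p_t^2(k)$; the case $r = 1$ is trivial since the right-hand side equals $1$, so assume $r \geq 2$. The key observation is that whenever the second-class particle at $p_t^2(k)$ actually moves, its new position equals $\min(U, V, p_t^2(k+1))$, where $U$ is the leftmost time-$t$ location of a first-class particle strictly to the right of $p_t^2(k)$ that moves during the update, and $V = x_j$ is the $j$-th non-staying-first-class-particle location to the right of $p_t^2(k)$, with $j := j_t^2(p_t^2(k))$ the geometric jump variable. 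Consequently, the event $\{D \geq r\}$ forces \emph{both} $U \geq p_t^2(k) + r$ and $V \geq p_t^2(k) + r$ to hold simultaneously.

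The event $\{U \geq p_t^2(k)+r\}$ asserts that every time-$t$ first-class particle located in the interval $(p_t^2(k), p_t^2(k)+r-1]$ stays after the update. Let $M$ denote the number of such first-class particles, which is deterministic given $\mathfrak{p}_t$. Since a first-class particle staying at site $z$ is at least as restrictive as the event $\{\chi_t^1(z)=1\}$, and the $\chi^1$ variables are independent $\text{Bernoulli}(b_1)$, the conditional probability of this joint staying event is bounded by $b_1^M$. On this event, the non-staying-first-class positions inside $(p_t^2(k), p_t^2(k)+r-1]$ are precisely the positions that are empty at time $t$, totaling $r-1-M$. Therefore $\{V \geq p_t^2(k)+r\}$ translates into $\{j \geq r-M\}$, which has probability exactly $b_2^{r-1-M}$ by the independence of the $j^2$ family from the $\chi^1$ family.

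Multiplying the two estimates gives
\[
\mathbb{P}(D \geq r \mid \mathfrak{p}_t) \;\leq\; b_1^{M} \cdot b_2^{r-1-M} \;\leq\; \max(b_1, b_2)^{r-1},
\]
uniformly for $M \in \{0, 1, \dots, r-1\}$, since $b_1, b_2 \in (0,1)$ both lie below $\max(b_1, b_2)$. Taking expectation over $\mathfrak{p}_t$ yields the lemma. I do not anticipate a serious obstacle: the only mild subtlety is that a first-class particle could in principle be pushed by another first-class particle from the left, but this can only add further constraints to the staying event, so the product bound $b_1^M$ on the probability that all $M$ first-class particles remain in place survives untouched.
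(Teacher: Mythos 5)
Your proposal is correct and follows essentially the same route as the paper's proof: bound the event $\{p_{t+1}^2(k)-p_t^2(k)\ge r\}$ by requiring the $m$ (your $M$) first-class particles in $(p_t^2(k),p_t^2(k)+r)$ to stay, costing at most $b_1^m$, and simultaneously requiring $j_t^2(p_t^2(k))\ge r-m$, costing $b_2^{r-m-1}$, then use independence of the two families of random variables. Your write-up is just a more explicit version of the paper's argument (spelling out the roles of $U$ and $V$ and the conditioning on $\mathfrak{p}_t$), with no substantive difference.
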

\begin{proof}
Let us assume at time $t$, there are $m$ first-class particles lying in the interval $(p_t^{2} (k), p_t^{2} (k) + r)$.
If $p^2_{t+1}(k) \geq p_t^2(k) + r$ holds, then these first-class particles must stay during the update from time $t$ to $t+1$. The probability is upper bounded by $b_1^m$. Moreover, according to Definition \ref{def:2classs6v}, we require that $j_t^2(p_t^2(k)) \geq r - m$. Since the previous events are independent, we have 
\begin{equation*}
\mathbb{P}\big(p_{t+1}^2 (k) - p_t^2(k) \geq r\big) \leq b_1^m \mathbb{P}\big(j_t^2 (p_t^2(k)) \geq r-m\big) \leq \max(b_1, b_2)^{r-1}.  \qedhere
\end{equation*}
\end{proof}
Fix $n \in \mathbb{Z}_{\geq 1}$ and $y \in \mathbb{Z}$. Denote $\N_y(n)$ to be the number of discrepancies in the coupled S6V model 
that \red{start} from $(-\infty, y]$ \red{at time $0$} and reach the interval $[-n, n]$ \red{at some time between} $[\lfloor \sqrt{n} \rfloor, n]$.
\begin{lemma}\label{lem:discrepanciesnb2}
Fix $M  > 1 + \frac{1}{1 - \max(b_1, b_2)}$ and $\nu \in \II$.  
We have $\mathbb{E}^{\theta_{\red{\nu}}}[\N_{-Mn}(n)] = o(n)$. 
\end{lemma}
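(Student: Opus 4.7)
The plan is to treat each discrepancy as a second-class particle (following the coupling of Definition \ref{def:coupling}) and bound its displacement with a Chernoff estimate powered by Lemma \ref{lem:secondclass}. Second-class particles only move to the right, so a discrepancy starting at some position $x \leq -Mn$ can visit $[-n, n]$ during $[\lfloor \sqrt{n}\rfloor, n] \subseteq [0, n]$ only if its displacement in $n$ steps is at least $|x| - n \geq (M-1)n$. The hypothesis $M > 1 + \frac{1}{1 - \max(b_1, b_2)}$ forces $M - 1$ to strictly exceed the mean-speed bound implicit in Lemma \ref{lem:secondclass}, so such large deviations should be exponentially rare.

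To implement this, set $q := \max(b_1, b_2)$. Let $X_x(t)$ denote the position at time $t$ of the discrepancy initially at $x$; if two discrepancies annihilate then we continue tracking the underlying particle as a first-class particle, which only slows it down. Inspecting the proof of Lemma \ref{lem:secondclass} (whose single step uses only the independence of the update variables $\{\chi^i_s(\cdot), j^i_s(\cdot)\}$ from the current configuration) shows that
\[
\mathbb{P}\bigl(\Delta_s \geq r \,\big|\, \mathcal{F}_s\bigr) \leq q^{r-1}, \qquad r \geq 1,
\]
where $\Delta_s := X_x(s+1) - X_x(s)$. Hence, conditionally on $\mathcal{F}_s$, $\Delta_s$ is stochastically dominated by a $\mathrm{Geom}(1-q)$ random variable whose MGF is $\phi(\lambda) := \frac{(1-q)e^\lambda}{1 - qe^\lambda}$, with $\phi(0) = 1$ and $\phi'(0) = \frac{1}{1-q}$. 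Iterating the tower property yields $\mathbb{E}[e^{\lambda (X_x(n) - x)}] \leq \phi(\lambda)^n$.

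Because $M - 1 > \phi'(0)$, a first-order Taylor expansion gives $\rho := e^{-\lambda(M-1)} \phi(\lambda) < 1$ for all sufficiently small $\lambda > 0$; fix such a $\lambda$. Markov's inequality then yields, for each $x \leq -Mn$,
\[
\mathbb{P}\bigl(\text{a discrepancy at } x \text{ at time 0 reaches } [-n, n] \text{ by time } n\bigr) \leq \mathbb{P}\bigl(X_x(n) - x \geq |x| - n\bigr) \leq e^{-\lambda(|x|-n)} \phi(\lambda)^n.
\]
There is at most one discrepancy per site, so summing over $x \leq -Mn$ and evaluating a geometric series,
\[
\mathbb{E}^{\theta_\nu}\bigl[\N_{-Mn}(n)\bigr] \leq \sum_{x \leq -Mn} e^{-\lambda(|x|-n)} \phi(\lambda)^n = \frac{\rho^n}{1 - e^{-\lambda}},
\]
which decays exponentially in $n$ and is certainly $o(n)$.

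The main subtlety is promoting Lemma \ref{lem:secondclass} to a conditional estimate, which is essentially automatic from how its proof uses the independence of the update variables from the current configuration; the rest is a routine large-deviation computation combined with a geometric series. Note that the hypothesis $\nu \in \II$ is not used in this lemma itself; presumably it is only required downstream in the eventual application.
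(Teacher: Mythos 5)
Your proof is correct and follows essentially the same route as the paper: the paper also reduces the lemma to the tail bound of Lemma \ref{lem:secondclass} on single-step displacements of second-class particles together with a large-deviation estimate for sums of dominating i.i.d.\ geometric variables, using the hypothesis $M > 1 + \frac{1}{1-\max(b_1,b_2)}$ in exactly the way you do. The only organizational difference is that the paper tracks just the rightmost discrepancy of each type and combines $\mathbb{P}^{\theta_\nu}(\N_{-Mn}(n)\geq 1)\leq Ce^{-n/C}$ with the crude bound $\N_{-Mn}(n)\leq 2n$, whereas you sum an explicit Chernoff bound over all starting sites; both are valid, and the paper likewise remarks afterwards that the hypothesis $\nu\in\II$ is not actually needed.
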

\begin{proof}
For the coupled S6V model $(\eta_t, \xi_t)_{t \in \mathbb{Z}_{\geq 0}}$, during its update from time $t$ to $t+1$, there can be at most two discrepancies (one $\eta$-type and one $\xi$-type) moving across $-Mn$ from left to right. Therefore, we have $\red{\mathsf{N}}_{-Mn}(n) \leq 2\red{(n - \lfloor \sqrt{n} \rfloor)} \leq 2n$. It suffices to show that there exists $C = C(M)$ such that 
\begin{equation}\label{eq:expdecay}
\mathbb{P}^{\theta_{\red{\nu}}}\big(\red{\mathsf{N}}_{-Mn}(n) \geq 1\big) \leq C e^{-\frac{n}{C}}.
\end{equation}
To show \eqref{eq:expdecay}, let $Z_t$ be the location of the rightmost \red{$\eta$-type} discrepancy 
in $(\eta_t, \xi_t)$
that originally lies in $(-\infty, -Mn]$ \red{at time 0}. Note that $Z_t$ travels as a second-class particle. By Lemma \ref{lem:secondclass}, We have 
\begin{equation*}
\mathbb{P}^{\theta_{\red{\nu}}} (Z_t - Z_{t-1} \geq r) \leq \max(b_1, b_2)^{r-1}.
\end{equation*}
\red{Similarly, let $Z_t'$ be the location of the rightmost $\xi$-type discrepancy in in $(\eta_t, \xi_t)$ that originally lies in $(-\infty, -Mn]$ at time $0$. We have $\mathbb{P}^{\theta_{\red{\nu}}} (Z'_t - Z'_{t-1} \geq r) \leq \max(b_1, b_2)^{r-1}.$
Hence, we can find i.i.d. geometric random variables \red{$\{X_t\}_{t \in \mathbb{Z}_{\geq 1}}$, $\{X'_t\}_{t \in \mathbb{Z}_{\geq 1}}$} with parameter $1 - \max(b_1, b_2)$ such that $Z_t - Z_{t-1}$ and $Z'_t - Z'_{t-1}$ are respectively stochastically dominated by $X_t$ and $X_t'$ for all $t \in \mathbb{Z}_{\geq 1}$.}  
By a standard large deviations bound, we have 
\begin{align*}
\mathbb{P}^{\theta_{\red{\nu}}}(\N_{-Mn} (n) \geq 1) &\leq 
\mathbb{P}^{\theta_{\red{\nu}}}(Z_n - Z_0 \geq (M-1)n) + \mathbb{P}^{\theta_{\red{\nu}}} (Z'_n - Z'_0 \geq (M-1)n)\\ 
&\leq \mathbb{P}^{\theta_{\red{\nu}}}\Big(\red{\sum_{i = 1}^n X_i \geq (M-1) n}\Big) + \mathbb{P}^{\theta_{\red{\nu}}}\Big(\red{\sum_{i = 1}^n X'_i \geq (M-1) n}\Big) \leq C e^{-\frac{1}{C} n}.
\end{align*}
This concludes \eqref{eq:expdecay}. 
\end{proof}
\begin{remark}
From the proof \red{above,} we see that Lemma \ref{lem:discrepanciesnb2} is \red{still} valid \red{if we replace $\theta_\nu$ with any probability measure} $\theta \in \MM_1(\SS \times \SS)$.
\end{remark}
Using Lemma \ref{lem:discrepanciesnb1} and \ref{lem:discrepanciesnb2}, we establish the following result.
\begin{prop}\label{prop:liggettineq}
\red{Fix $\nu \in \II$.} Consider the coupled S6V model with initial distribution $\theta_{\red{\nu}}$. Let $w_{\sqrt{n}, n}$ be the number of discrepancies that visited $[-n, n]$ \red{at some time between} $[\lfloor \sqrt{n} \rfloor, n]$. We have $\lim_{n \to \infty} \frac{1}{n} \mathbb{E}^{\theta_{\red{\nu}}} [w_{\sqrt{n}, n}] = 0$.
\end{prop}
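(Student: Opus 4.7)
The plan is to split $w_{\sqrt{n},n}$ according to the location of each contributing discrepancy at the intermediate time $\lfloor \sqrt{n}\rfloor$, using a large window $[-Mn,Mn]$. I would first fix $M > 1 + \frac{1}{1-\max(b_1,b_2)}$ so that Lemma \ref{lem:discrepanciesnb2} is available. Every discrepancy contributing to $w_{\sqrt{n},n}$ is present in the coupled system at some time $t \in [\lfloor \sqrt{n}\rfloor, n]$, and hence it already exists at time $\lfloor \sqrt{n}\rfloor$, because under the higher rank coupling of Definition \ref{def:coupling} discrepancies travel as second-class particles of the two-class S6V model, which can only be annihilated (when an $\eta$-type and a $\xi$-type meet) and never created. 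In particular, at time $\lfloor \sqrt{n}\rfloor$ such a discrepancy lies either in $[-Mn,Mn]$ or outside it.

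The key observation which handles the ``outside'' case is that second-class particles move only to the right (they either stay put or jump right, by Definition \ref{def:2classs6v}). Consequently, a discrepancy in $[Mn,\infty)$ at time $\lfloor \sqrt{n}\rfloor$ can never visit $[-n,n]$ afterward, while a discrepancy in $(-\infty,-Mn)$ at time $\lfloor \sqrt{n}\rfloor$ must already have been in $(-\infty,-Mn]$ at time $0$. Combining these two facts yields the pointwise bound
\[
w_{\sqrt{n},n} \;\le\; \#\bigl\{x\in[-Mn,Mn]:\ \eta_{\lfloor \sqrt{n}\rfloor}(x)\neq \xi_{\lfloor \sqrt{n}\rfloor}(x)\bigr\}\;+\;\mathsf{N}_{-Mn}(n).
\]

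Taking expectation under $\theta_\nu$, the first term on the right is $o(n)$ by Lemma \ref{lem:discrepanciesnb1} applied with this $M$, and the second term is $o(n)$ by Lemma \ref{lem:discrepanciesnb2} with the chosen $M$. Dividing through by $n$ and letting $n\to\infty$ proves the proposition. The only substantive point is the monotonicity reduction that forces every discrepancy outside $[-Mn,Mn]$ at time $\lfloor \sqrt{n}\rfloor$ which later visits $[-n,n]$ to have originated in $(-\infty,-Mn]$ at time $0$; once this is in place the statement is essentially a one-line combination of the two preceding lemmas, and I do not anticipate a real obstacle.
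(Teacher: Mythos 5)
Your proposal is correct and follows essentially the same route as the paper: the identical choice of $M$, the same pointwise bound $w_{\sqrt{n},n} \le \mathsf{N}_{-Mn}(n) + \#\{x\in[-Mn,Mn]: \eta_{\lfloor\sqrt{n}\rfloor}(x)\neq\xi_{\lfloor\sqrt{n}\rfloor}(x)\}$, and the same appeal to Lemmas \ref{lem:discrepanciesnb1} and \ref{lem:discrepanciesnb2}. The only difference is that you spell out the rightward-motion argument justifying the pointwise bound, which the paper leaves implicit.
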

\begin{proof}
Fix $M > 1 + \frac{1}{1 - \max(b_1, b_2)}$, we have 
\begin{equation*}
w_{\sqrt{n}, n} \leq \red{\mathsf{N}}_{-Mn} (n) + \#\Big\{x \in [-M n, Mn] \cap \mathbb{Z}: \eta_{\lfloor \sqrt{n} \rfloor} (x) \neq \xi_{\lfloor \sqrt{n} \rfloor} (x)\Big\}.
\end{equation*} 
Take the expectations of both sides above with respect to the initial probability distribution $\theta_\nu$, divide them by $n$ and apply Lemma \ref{lem:discrepanciesnb1} and \ref{lem:discrepanciesnb2} to the resulting right hand side, we conclude the proposition.
\end{proof}
Recall the definition of  $A_{x,y}$ from  \eqref{eq:Axy} and $a_{x, y}$ from \eqref{eq:axy}.
We use Proposition \ref{prop:liggettineq} and  Lemma \ref{lem:axy} to show Proposition \ref{prop:coupled}.
\begin{proof}[Proof of Proposition \ref{prop:coupled}]
Since $\lambda$ is a limit point of $\frac{1}{n} \sum_{i = 0}^{n-1} \theta_{\red{\nu}} \widetilde{S}_{\red{i}}$ as $n \to \infty$, it is clear that $\lambda \wtil{S}_{\red{1}} = \lambda$. Hence, we have $\lambda \in \IIt$. We proceed to prove \eqref{eq:nodiscrepancy}.
\red{As mentioned after Proposition \ref{prop:coupled}, the discrepancies can be identified as second-class particles, and two discrepancies of different types would disappear if they arrive at the same location for some time $T$. This is equivalent to saying that two second-class particles of different types become coupled after they arrive at the same location. By \eqref{eq:axy}, given that the configuration $(\eta_i, \xi_i) \in A_{x, y}$, at time $i+1$, with probability at least $a_{x, y}$, a discrepancy would disappear. Note that the discrepancies can not be created, we know that between time $[\lfloor \sqrt{n} \rfloor, n ]$, the expected number of discrepancies that disappear is at least $a_{x, y } \mathbb{E}^{\theta_{\red{\nu}}}[\sum_{i = \lfloor \sqrt{n}\rfloor}^{n-1} \mathbf{1}_{\{(\eta_i, \xi_i) \in A_{x, y}\}}]$. This number should be smaller than the expected number of discrepancies that visited $[-n, n]$  between time $[\lfloor \sqrt{n} \rfloor, n ]$ (we can take $n$ large enough so that $[x, y] \subseteq [-n, n]$). Therefore,}
\begin{equation*}
\mathbb{E}^{\theta_{\red{\nu}}} [w_{\sqrt{n}, n}] \geq
a_{x, y } \mathbb{E}^{\theta_{\red{\nu}}}\Big[\sum_{i = \lfloor \sqrt{n}\rfloor}^{n-1} \mathbf{1}_{\{(\eta_i, \xi_i) \in A_{x, y}\}}\Big]. 
\end{equation*}    
Divide both sides by $n$, take $n \to \infty$ and then apply Proposition \ref{prop:liggettineq} and Lemma \ref{lem:axy}, we know that $$\lim_{n \to 
\infty} \frac{1}{n}\mathbb{E}^{\theta_{\red{\nu}}}\Big[\sum_{i = \lfloor \sqrt{n} \rfloor}^{n-1} \mathbf{1}_{\{(\eta_i, \xi_i) \in A_{x, y}\}}\Big] = 0.$$
\red{This implies that}
\begin{equation*}
\red{\lim_{n \to 
\infty} \frac{1}{n}\mathbb{E}^{\theta_\nu}\Big[\sum_{i = 0}^{n-1} \mathbf{1}_{\{(\eta_i, \xi_i) \in A_{x, y}\}}\Big] = 0.}
\end{equation*} 
Since $\lambda$ is a limit point of the Ces\`{a}ro average 
$\frac{1}{n} \sum_{i = 0}^{n-1} \theta_{\red{\nu}} \widetilde{S}_i$, \red{the quantity on the left hand side above is equal to $\lambda(A_{x, y})$.} We conclude that $\lambda(A_{x, y}) = 0$ for any $x \neq y$. \red{Note that the complement of $\cup_{x < y} A_{x, y}$ is equal to union of sets on the left hand side of \eqref{eq:nodiscrepancy}, we conclude Proposition \ref{prop:coupled}.}
\end{proof}
\section{Current analysis and proof of Theorem \ref{thm:st}}
\label{sec:current}
\subsection{Monotonicity of current}
\red{
We define the \emph{current} function $K_y$ to be the number of particles that move across the location $y$ when we start the S6V model at time $0$ and update it to time $1$. Note that $K_y$ is either zero or one, and it indicates whether the horizontal edge of $(y, 0)$ -- $(y+1, 0)$ is occupied in the tiling interpretation of the S6V model. Mathematically, we define}
\begin{equation}\label{eq:current}
K_{y} := \mathbbm{1}_{\{\exists\, k \in \mathbb{Z}, p_0 (k) \leq y < p_{1} (k)\}},
\end{equation}
where we recall the definition of $p$ from Definition \ref{def:s6v}. 
We use $\mathbb{E}^{\eta} [K_y]$ to denote the expectation of the current of the S6V model with initial data $\eta$, where $\eta(x) = \mathbbm{1}_{\{\exists\, i,  p_0 (i) = x\}}$ for all $x \in \mathbb{Z}$.  
\begin{lemma}\label{lem:currentrec}
\red{Fix arbitrary $\eta \in \mathcal{S}$ and $y \in \mathbb{Z}$}, we have 
\begin{align*}
&\mathbb{E}^\eta [K_{y} ] = \mathbb{E}^\eta [K_{y-1}] + (1 - \mathbb{E}^{\eta}[K_{y-1}]) (1 - b_1), \qquad \text{  if } \eta (y) = 1,\\
&\mathbb{E}^{\eta} [K_{y}]  = b_2 \mathbb{E}^{\eta} [K_{y-1}], \hspace{12.1em} \text{  if } \eta (y) = 0.
\end{align*}
\end{lemma}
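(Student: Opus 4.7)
The plan is to prove Lemma \ref{lem:currentrec} by interpreting the current $K_y$ within the vertex-model picture underlying the S6V model. Recall from the tiling description that $K_y$ equals $1$ precisely when a horizontal edge appears between the vertices $(y,0)$ and $(y+1,0)$; equivalently, $K_y$ is the horizontal output line at vertex $(y,0)$, while $\eta(y)$ is the vertical input line at that same vertex and $K_{y-1}$ is its horizontal input line. The key structural fact I will use is the one-sided Markov property: given $K_{y-1}$ and $\eta(y)$, the configuration chosen at vertex $(y,0)$ is sampled independently of everything that happened at vertices $(x,0)$ with $x<y$, using the stochastic weights in Figure \ref{fig:vertexconfig}.

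Using that sequential sampling rule, I would then read off the conditional law of $K_y$ from the four possible input patterns. If the vertex has a purely vertical input ($\eta(y)=1$, $K_{y-1}=0$), the only admissible configurations are Type~V and Type~VI, so $\mathbb{P}(K_y=1\mid K_{y-1}=0,\eta(y)=1)=1-b_1$. If both input lines are present ($\eta(y)=1$, $K_{y-1}=1$), the only admissible configuration is Type~I, so $\mathbb{P}(K_y=1\mid K_{y-1}=1,\eta(y)=1)=1$. If the vertex has a purely horizontal input ($\eta(y)=0$, $K_{y-1}=1$), we are in Type~III or Type~IV, giving $\mathbb{P}(K_y=1\mid K_{y-1}=1,\eta(y)=0)=b_2$. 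Finally if no input is present ($\eta(y)=K_{y-1}=0$), the only configuration is Type~II and $K_y=0$ deterministically.

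Conditioning on $K_{y-1}$ and taking expectations then yields the stated recursion. In the case $\eta(y)=1$,
\begin{equation*}
\mathbb{E}^\eta[K_y] = \mathbb{P}^\eta(K_{y-1}=1)\cdot 1 + \mathbb{P}^\eta(K_{y-1}=0)\cdot(1-b_1) = \mathbb{E}^\eta[K_{y-1}] + (1-\mathbb{E}^\eta[K_{y-1}])(1-b_1),
\end{equation*}
and in the case $\eta(y)=0$,
\begin{equation*}
\mathbb{E}^\eta[K_y] = \mathbb{P}^\eta(K_{y-1}=1)\cdot b_2 + \mathbb{P}^\eta(K_{y-1}=0)\cdot 0 = b_2\,\mathbb{E}^\eta[K_{y-1}],
\end{equation*}
which are exactly the two formulas claimed.

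The only non-routine point is justifying the conditional independence used in the first display, i.e.\ that the vertex weights at $(y,0)$ can be applied after conditioning on $K_{y-1}$. I would handle this by reconciling Definition \ref{def:s6v} with the vertex model: the sequential left-to-right update in Definition \ref{def:s6v} uses i.i.d.\ variables $\chi_0(\cdot),j_0(\cdot)$ assigned to positions, and one checks directly that for each possible pair of inputs at $(y,0)$ the induced distribution of outputs coincides with the weights in Figure \ref{fig:vertexconfig}; since the randomness at position $y$ is independent of the randomness used at positions $<y$, the conditional law of $K_y$ depends on the past only through the pair $(K_{y-1},\eta(y))$. This is the only subtlety and, once established, the lemma follows by the case analysis above.
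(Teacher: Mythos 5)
Your proposal is correct and follows essentially the same route as the paper: the paper also conditions on $K_{y-1}$ and reads off $\mathbb{P}^\eta(K_y=1\mid K_{y-1})=b_2^{1-\eta(y)}$ or $1-b_1^{\eta(y)}$ from the local update rule, then takes expectations. Your phrasing in terms of the vertex weights of Figure \ref{fig:vertexconfig} rather than the particle dynamics of Definition \ref{def:s6v} is an equivalent presentation of the same case analysis, and your explicit justification of the conditional independence is a welcome (if routine) addition that the paper leaves implicit.
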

\begin{proof}
Given a particle configuration $\eta \in \SS$ and consider the update of it from time $0$ to time $1$. During the update, assume that a particle has moved across $y-1$ and arrived at $y$. If the location $y$ is already occupied by another particle, then the former particle stops and the latter particle jumps to its right with probability $1$. If instead $y$ is not occupied, then with probability $b_2$, the former particle continues to move across $y$. 
Hence, 
we have 
\begin{align}\label{eq:recur1}
\mathbb{P}^{\eta}(
K_y = 1) = b_2^{1 - \eta (y)}  \qquad\quad  \text{ if } 
K_{y-1} = 1.
\end{align}
Under a similar reasoning, we have
\begin{align}\label{eq:recur2}
\mathbb{P}^{\eta}(K_y = 1) = 1 - b_1^{\eta (y)} \quad \,\quad\   \text{if } K_{y-1} = 0. 
\end{align}
Combine \eqref{eq:recur1}-\eqref{eq:recur2}, we conclude the lemma. 
\end{proof}
The following proposition establishes the monotonicity of the \red{expectation of} the current.
\begin{lemma}[Monotonicity]\label{lem:current}
Fix arbitrary $\eta, \xi \in \SS$. 
If $\eta(z) \geq \xi(z)$ for all $z \leq y$, then $\mathbb{E}^{\eta}[K_{y}]  \geq \mathbb{E}^\xi [K_{y}]$. If additionally there exist $x \leq y$ such that $\eta (x) > \xi (x)$, then $\mathbb{E}^{\eta} [K_{y}] > \mathbb{E}^\xi [K_{y}]$. 
\end{lemma}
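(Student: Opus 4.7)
The plan is to prove the non-strict inequality first (via the higher rank coupling of Definition \ref{def:coupling}) and then upgrade it to the strict one via a single-discrepancy argument based on Lemma \ref{lem:currentrec}. A useful preliminary observation is that $\mathbb{E}^\eta[K_y]$ depends only on $\{\eta(z)\}_{z \leq y}$: iterating Lemma \ref{lem:currentrec} downward from $y$ shows that each step only introduces $\eta(z)$ for the current position $z$, and since the multiplicative coefficient on the previous expectation always lies in $\{b_1, b_2\} \subset (0,1)$, any dependence on a base configuration far to the left is washed out geometrically. I will use this fact twice below.

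For non-strict monotonicity I first modify $\eta$ and $\xi$ to agree on $(y,\infty)$; by the preliminary observation this does not change either expectation, and after the modification $\eta \geq \xi$ on all of $\mathbb{Z}$. Running the higher rank coupling of Definition \ref{def:coupling}, the auxiliary process $\widetilde{\xi}_0$ carries only first-class particles, located at the sites where $\xi(z)=1$, while $\widetilde{\eta}_0$ carries the same first-class particles together with second-class particles at the sites where $\eta(z)=1>\xi(z)$. The S6V model has the property (recorded in the proof-idea discussion preceding Proposition \ref{prop:coramol}) that at most one particle crosses any given site per time step. Consequently $K_y^\xi$ equals the number of first-class crossings of $y$ in the coupled two-class process, while $K_y^\eta$ equals the total number of first- plus second-class crossings of $y$. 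Hence $K_y^\eta \geq K_y^\xi$ pointwise under the coupling, and taking expectations gives $\mathbb{E}^\eta[K_y] \geq \mathbb{E}^\xi[K_y]$.

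For the strict inequality, suppose $\eta(x_0)=1>0=\xi(x_0)$ for some $x_0 \leq y$, and introduce $\xi' \in \SS$ with $\xi'(x_0):=1$ and $\xi'(z):=\xi(z)$ for $z \neq x_0$, so that $\xi \leq \xi' \leq \eta$ on $(-\infty,y]$; by the non-strict part it suffices to show $\mathbb{E}^{\xi'}[K_y] > \mathbb{E}^\xi[K_y]$. Since $\xi' = \xi$ on $(-\infty, x_0-1]$, the preliminary observation yields $\mathbb{E}^{\xi'}[K_{x_0-1}] = \mathbb{E}^\xi[K_{x_0-1}] =: p \in [0,1]$. Applying Lemma \ref{lem:currentrec} at position $x_0$ (noting $\xi'(x_0)=1$ and $\xi(x_0)=0$) gives
\begin{equation*}
\mathbb{E}^{\xi'}[K_{x_0}] - \mathbb{E}^\xi[K_{x_0}] = \bigl((1-b_1) + b_1 p\bigr) - b_2 p = (1-b_1) + (b_1 - b_2)p \geq \min(1-b_1,\, 1-b_2) > 0.
\end{equation*}
Since $\xi'(z)=\xi(z)$ for $z \in (x_0,y]$, iterating Lemma \ref{lem:currentrec} at each such $z$ multiplies the difference $\mathbb{E}^{\xi'}[K_z] - \mathbb{E}^\xi[K_z]$ by a positive factor ($b_1$ or $b_2$) at each step, so $\mathbb{E}^{\xi'}[K_y] - \mathbb{E}^\xi[K_y] > 0$.

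The main point to take care of will be the preliminary observation that $K_y$ is determined by $\{\eta(z)\}_{z \leq y}$, which is what permits both the reduction to $\eta \geq \xi$ globally in the coupling step and the identity $\mathbb{E}^{\xi'}[K_{x_0-1}] = \mathbb{E}^\xi[K_{x_0-1}]$ in the strict step. Once this is in place, the non-strict inequality is immediate from the two-class particle interpretation and the strict one is a short algebraic consequence of Lemma \ref{lem:currentrec}.
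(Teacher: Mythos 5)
Your argument is correct and essentially matches the paper's: the weak inequality comes from the higher rank coupling (the first-class particles of $\widetilde{\eta}$ and $\widetilde{\xi}$ coincide on $(-\infty,y]$ and move together under the shared randomness, so any $\xi$-crossing of $y$ forces an $\eta$-crossing), and the strict inequality comes from the recursion of Lemma \ref{lem:currentrec}. The only cosmetic difference is in the strict step: the paper compares $\eta$ and $\xi$ directly at the \emph{rightmost} discrepancy $m \leq y$, needing only the weak bound $\mathbb{E}^\eta[K_{m-1}] \geq \mathbb{E}^\xi[K_{m-1}]$, whereas you insert an intermediate configuration $\xi'$ and use the exact equality $\mathbb{E}^{\xi'}[K_{x_0-1}] = \mathbb{E}^{\xi}[K_{x_0-1}]$, which is why you need your preliminary observation (itself correct, and invoked implicitly by the paper elsewhere, e.g.\ in the proof of Lemma \ref{lem:currentdis}).
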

\begin{proof}
We prove the first part of the lemma. 
\red{We couple the update of S6V models from $\eta_0 = \eta$ to $\eta_1$ and from $\xi_0 = \xi$ to $\xi_1$ using the higher rank coupling stated in Definition \ref{def:coupling}.}
Since $\eta_0 (z) \geq \xi_0 (z)$ for all $z \leq y$, 
$\widetilde{\xi}_0 (z)$ is equal to either $0$ or $1$ for all $z \leq y$. In other words, on $(-\infty, y]$, $\widetilde{\xi}_0$ only contains first-class particles. 
$\weta_0$ contains first-class particles at the same positions and might have a few more second-class particles. By Definition \ref{def:coupling}, the first-class particles in $\weta_0$ and $\wxi_0$ \red{will} travel together. \red{Let $p'_t$ record the location of particles in $\xi_t$ so that $\xi_t (x) = \mathbbm{1}_{\{\exists i, p'_t (i) = x\}}$.
By the previous argument, $\mathbbm{1}_{\{\exists k \in \mathbb{Z}, p'_0 (k) \leq y < p'_{1} (k)\}} = 1$ implies that $\mathbbm{1}_{\{\exists k \in \mathbb{Z}, p_0 (k) \leq y < p_{1} (k)\}} = 1$. Therefore,  when $\eta(z) \geq \xi(z)$ for all $z \leq y$, under higher rank coupling, we have} \begin{equation*}
\red{\mathbbm{1}_{\{\exists k \in \mathbb{Z}, p_0 (k) \leq y < p_{1} (k)\}} \geq \mathbbm{1}_{\{\exists k \in \mathbb{Z}, p'_0 (k) \leq y < p'_{1} (k)\}}.}
\end{equation*}  
\red{Taking the expectation of both sides implies that  $\mathbb{E}^{\eta}[K_y]  \geq \mathbb{E}^\xi [K_y]$.}

We proceed to prove the second part of the lemma. \red{If additionally there exists some $x$ satisfying $x \leq y$ and $\eta (x) > \xi (x)$}, \red{we can define} $m := \max \{z: \eta_0 (z) > \xi_0 (z), z \leq y\}$. By definition, we have $\eta (m) = 1$, $\xi (m) = 0$ and $\eta (z) = \xi (z)$ for all $z \in [m+1, y] \cap \mathbb{Z}$. By Lemma \ref{lem:currentrec}, we have 
\begin{equation}\label{eq:recur}
\begin{split}
&\red{\mathbb{E}^\eta [K_{m}]  = \mathbb{E}^\eta [K_{m-1}] + (1 - \mathbb{E}^\eta [K_{m-1}]) (1 - b_1)},\\
&\red{\mathbb{E}^\xi [K_{m}] = \mathbb{E}^\xi [K_{m-1}]  b_2.}
\end{split}
\end{equation}
Using the first part of the lemma, we have  $\mathbb{E}^\eta [K_{m-1}] \geq \mathbb{E}^\xi [K_{m-1}]$. Using this together with \eqref{eq:recur}, we have 
\begin{align*}
\mathbb{E}^\eta [K_m] &=  \mathbb{E}^{\eta} [K_{m-1}] + (1 - \mathbb{E}^\eta [K_{m-1}]) (1 - b_1) \\
&\geq  \mathbb{E}^\xi [K_{m-1}] + (1 - \mathbb{E}^\xi [K_{m-1}]) (1 - b_1) \\
&> \mathbb{E}^\xi [K_{m-1}] b_2 = \mathbb{E}^\xi [K_m].
\end{align*}
This implies that $\mathbb{E}^\eta [K_m] > \mathbb{E}^\xi [K_m]$. Since $\eta_0 (z) = \xi_0 (z)$ for all $z \in [m+1, y]$, this implies that $\mathbb{E}^\eta [K_z]$ and $\mathbb{E}^\xi [K_z]$ follow the same recursion for $z \in [m+1, y]$ in Lemma \ref{lem:currentrec}. Since the recursions in Lemma \ref{lem:currentrec} preserve the monotonicity, by $\mathbb{E}^\eta [K_m] > \mathbb{E}^\xi [K_m]$,  
we conclude that $\mathbb{E}^\eta [K_{y}] > \mathbb{E}^\xi [K_{y}]$.
\end{proof}
Since the vertex of the S6V model is conservative in lines, we readily have the following lemma. 
\begin{lemma}\label{lem:currentconserve}
We have 
\begin{equation*}
\red{\mathbbm{1}_{\{\exists k \in \mathbb{Z}, p_0 (k) \leq x-1 < p_{1} (k)\}}} + \sum_{k = x}^y \eta_0 (k) = \red{\mathbbm{1}_{\{\exists k \in \mathbb{Z}, p_0 (k) \leq y < p_{1} (k)\}}} + \sum_{k = x}^{y} \eta_1 (k).
\end{equation*}
\end{lemma}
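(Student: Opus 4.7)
The plan is to derive this identity from the vertex-by-vertex line-conservation property of the S6V model and then telescope across the row of vertices with columns in $[x, y]$.

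First, I would introduce horizontal-edge indicators along the time slab from $0$ to $1$. For each integer $k$, let $h_k \in \{0, 1\}$ be the number of horizontal lines entering the vertex at $(k, 0)$ from the left (equivalently, leaving the vertex at $(k-1, 0)$ to the right). By the definition of the current in \eqref{eq:current} together with the tiling interpretation of the S6V model, one has $h_k = K_{k-1}$: a horizontal line on the edge from $(k-1, 0)$ to $(k, 0)$ appears precisely when some particle was at position $\leq k-1$ at time $0$ but has moved to a position $\geq k$ by time $1$.

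Next, I would invoke the conservative property of each vertex emphasized in Section \ref{sec:s6v}: for each of the six allowed configurations in Figure \ref{fig:vertexconfig}, the total number of input lines (from bottom and left) equals the total number of output lines (to top and right). Applied to the vertex at $(k, 0)$, whose bottom and left inputs are $\eta_0(k)$ and $h_k$, and whose top and right outputs are $\eta_1(k)$ and $h_{k+1}$, this yields
\begin{equation*}
h_k + \eta_0(k) = h_{k+1} + \eta_1(k), \qquad k \in \mathbb{Z}.
\end{equation*}

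Summing this relation over $k = x, x+1, \ldots, y$, the horizontal terms telescope to give
\begin{equation*}
h_x + \sum_{k=x}^{y} \eta_0(k) = h_{y+1} + \sum_{k=x}^{y} \eta_1(k),
\end{equation*}
which is precisely the statement of the lemma once one substitutes $h_x = K_{x-1}$ and $h_{y+1} = K_y$. There is essentially no real obstacle; the only point worth checking is that the horizontal-edge variables $h_k$ are well-defined even in the infinite-particle setting, but this is handled by the block-decomposition construction of Section \ref{sec:infinite}, which guarantees that almost surely the update on any finite spatial interval depends on only finitely many sources of randomness, so each $h_k$ is an honest $\{0, 1\}$-valued random variable.
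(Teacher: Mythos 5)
Your proof is correct and is exactly the argument the paper intends: the paper dispenses with the lemma in one sentence ("since the vertex of the S6V model is conservative in lines, we readily have..."), and your vertex-by-vertex conservation identity $K_{k-1} + \eta_0(k) = K_k + \eta_1(k)$ followed by telescoping over $k = x, \dots, y$ is precisely the omitted computation (compare the single-vertex instance \eqref{eq:conservative} used later in the proof of Theorem \ref{thm:st}).
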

\begin{proof}[Proof of Theorem \ref{thm:st}]
By Lemma \ref{lem:prodstat}, we know that $\{\mu_\rho\}_{\rho \in [0, 1]} \subseteq \II$. To conclude the proof of Theorem \ref{thm:st}, it suffices to show that for all $\nu \in \II_e$, we have $\nu \in \{\mu_{\rho}\}_{\rho \in [0, 1]}$. 
Since $\nu \in \II_e$, by Corollary \ref{cor:shift}, we know that either $\nu > \nu \circ \tau^{-1}$\red{,} $\nu < \nu \circ \tau^{-1}$ or $\nu = \nu \circ \tau^{-1}$. 
We claim that only the last case is  possible.

To prove this claim, we argue by contradiction. Let $\mathbb{E}^\nu[K_y] = \int \mathbb{E}^\eta [K_y] \nu(d\eta)$.
If $\nu < \nu \circ \tau^{-1}$, by Lemma \ref{lem:current}, there exists $y \in \mathbb{Z}$ such that 
\begin{equation}\label{eq:currentineq}
\mathbb{E}^{\nu}[K_{y\red{-}1}] < \mathbb{E}^{\nu \circ \tau^{-1}}[K_{y-1}] = \mathbb{E}^{\nu}[K_{y}]. 
\end{equation}
In addition, since the vertex of the S6V model is conservative in lines, we have 
\begin{equation}\label{eq:conservative}
\eta_0 (y) + K_{y-1}   = \eta_{1} (y) + K_{y}.
\end{equation}
Using \eqref{eq:currentineq} and \eqref{eq:conservative}, we have $\mathbb{E}^{\nu} [\eta (y)] > \mathbb{E}^{\nu} [\eta_{1} (y)]$, which contradicts 
the assumption that $\nu \in \II$. We can also rule out $\nu > \nu \circ\tau^{-1}$ using a similar argument. 
Consequently, we conclude $\nu = \nu \circ \tau^{-1}$, which implies $\nu \in \mathcal{P}$. Hence, we know that $\nu \in \II_e$ implies $\nu \in \II_e \cap \PP \subseteq (\II \cap \PP)_e$. By Corollary \ref{cor:agg}, we have $\nu \in \{\mu_\rho\}_{\rho \in [0, 1]}$. This concludes the theorem.
\end{proof}

\section{The blocking measure $\mu_*$ for the S6V model under a moving frame}
\label{sec:blk} 
In this section, we look at the shifted S6V model $(\eta_t')_{t \in \ZZ_{\geq 0}} = (\tau_t (\eta_t))_{t \in \ZZ_{\geq 0}}$. 
Recall that we use $\mathfrak{I}$ to denote the set of its stationary distribution\red{s}. 
We have proved that $\{\mu_\rho\}_{\rho \in [0, 1]} \subseteq \II \cap \PP$, 
using this together with the fact that the S6V model is translation invariant implies that $\{\mu_\rho\}_{\rho \in [0, 1]} \subseteq \mathfrak{I}$. 
Recall the definition of the blocking measure $\mu_*$ from \eqref{eq:prodinhomo}. The main task of the section is to show that  $\mu_* \in \mathfrak{I}$. 
The key ingredient is the following lemma, which shows a shifted version of the well-known local stationarity of the S6V model (see Lemma \ref{lem:lcs}). 
\begin{lemma}[Local pseudo-stationarity]
	\label{lem:bs}
Consider a vertex configuration with stochastic weights in Figure \ref{fig:vertexconfig}. Recall that $q = \frac{b_1}{b_2}$. Let $v, h, v', h'$ denote the number of lines (either zero or one)  on the bottom, left, top and right of the vertex. 
Fix $\rho, \zeta \in [0, 1]$ such that 
\begin{equation}\label{eq:bs}
\red{q \rho (1-\zeta) = \zeta (1-\rho).}
\end{equation}
Assume that $(v, h) \sim \text{Ber}(\rho) \otimes \text{Ber}(\zeta)$, 
we sample $v'$ and $h'$ according to the probability measure determined by the stochastic weights in Figure \ref{fig:vertexconfig}, then we have $(v', h') \sim \text{Ber}(\zeta) \otimes \text{Ber}(\rho)$. See Figure \ref{fig:blocking} for visualization.
\end{lemma}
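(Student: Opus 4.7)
The plan is to reduce the statement to a direct four-case computation using the vertex weights in Figure~\ref{fig:vertexconfig}, then observe that the two nontrivial marginal identities both collapse to the hypothesis $q\rho(1-\zeta) = \zeta(1-\rho)$.

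First I would write down the joint distribution of $(v',h')$ explicitly as a function of $(\rho,\zeta,b_1,b_2)$. Since the S6V vertex is conservative, the corners are automatic: the only configuration with input $(0,0)$ is $(v',h')=(0,0)$ with weight $1$, so
\[
\mathbb{P}(v'=0,h'=0) = (1-\rho)(1-\zeta),
\]
and similarly $\mathbb{P}(v'=1,h'=1) = \rho\zeta$. These already agree with the target product measure $\mathrm{Ber}(\zeta)\otimes\mathrm{Ber}(\rho)$, and use none of the hypothesis. For the off-diagonal pair, reading the weights of Types III--VI off Figure~\ref{fig:vertexconfig},
\begin{align*}
\mathbb{P}(v'=0,h'=1) &= (1-\rho)\zeta\, b_2 + \rho(1-\zeta)(1-b_1),\\
\mathbb{P}(v'=1,h'=0) &= (1-\rho)\zeta(1-b_2) + \rho(1-\zeta)\, b_1.
\end{align*}

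Next I would match these against the target values $(1-\zeta)\rho$ and $\zeta(1-\rho)$ respectively. Subtracting and rearranging, the first identity reads
\[
(1-\rho)\zeta\, b_2 \;=\; \rho(1-\zeta)\, b_1,
\]
and the second identity reads
\[
\rho(1-\zeta)\, b_1 \;=\; (1-\rho)\zeta\, b_2,
\]
which is the same equation. Dividing by $b_2$ gives exactly $\zeta(1-\rho) = q\rho(1-\zeta)$, i.e.\ the hypothesis \eqref{eq:bs}. Conversely, assuming \eqref{eq:bs} makes both identities hold, so $(v',h')\sim \mathrm{Ber}(\zeta)\otimes\mathrm{Ber}(\rho)$, proving the lemma.

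There is no real obstacle here; the statement is a short algebraic verification built around the four vertex types with nonzero weight. The only thing worth highlighting in the write-up is the conceptual point that \emph{one} scalar relation between $(\rho,\zeta)$ is enough to secure \emph{both} marginals and the product structure simultaneously, which is ultimately a consequence of conservativity (the diagonal cases are free) together with the fact that the two ``turning'' weights $1-b_1$ and $1-b_2$ satisfy the stochasticity identities that make the two matching conditions equivalent. This single-equation constraint is precisely what will be iterated in the next section to build the blocking measure $\mu_*$.
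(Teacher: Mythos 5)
Your proof is correct and is essentially the same direct verification as the paper's: you compute the joint law of $(v',h')$ over the four outcomes and observe that both off-diagonal matching conditions reduce to $b_1\rho(1-\zeta)=b_2\zeta(1-\rho)$, i.e.\ \eqref{eq:bs}, while the paper equivalently computes the two marginals together with $\mathbb{P}(v'=1,h'=1)=\rho\zeta$. The difference is purely organizational, so nothing further is needed.
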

\begin{figure}[ht]
\centering
\begin{tikzpicture}
\draw[thick] (-1, 0) -- (1, 0);
\draw[thick] (0, -1) -- (0, 1);
\node at (0, -1.3) {$v$};
\node at (0, 1.3) {$v'$};
\node at (-1.6, 0) {$h$};
\node at (1.6, 0) {$h'$};
\begin{scope}[xshift = 6cm]
\draw[thick] (-1, 0) -- (1, 0);
\draw[thick] (0, -1) -- (0, 1);
\node at (0, -1.3) {$\Ber(\rho)$};
\node at (0, 1.3) {$\Ber(\zeta)$};
\node at (-1.6, 0) {$\Ber(\zeta)$};
\node at (1.6, 0) {$\Ber(\rho)$};
\end{scope}
\end{tikzpicture}
\caption{A visualization of Lemma \ref{lem:bs}. 
}
\label{fig:blocking}
\end{figure}
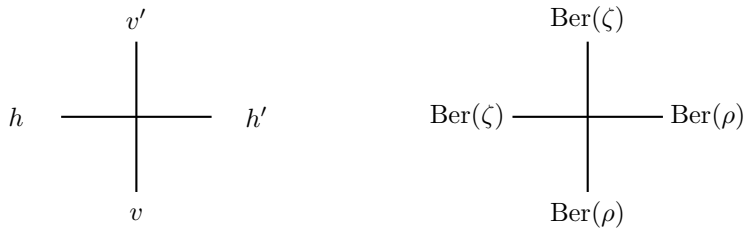
\begin{remark}
This lemma should be compared with Lemma \ref{lem:lcs}, which says if $\rho$ and $\zeta$ instead satisfy the relation in \eqref{eq:lcs}, then $(v', h') \sim \Ber(\rho) \times \Ber(\zeta)$. Lemma \ref{lem:bs} relates to the stationarity of the shifted S6V model while Lemma \ref{lem:lcs} relates to the stationarity of the S6V model without shifting. 
\end{remark}
\begin{proof}[\red{Proof of Lemma \ref{lem:bs}}]
	The proof is straightforward. We can compute 
	\begin{align*}
	\mathbb{P}\big(v' = 1\big) &= \mathbb{P}\big(v = 1, h = 1\big) + \mathbb{P}\big(v = 1, h = 0\big) b_1 + \mathbb{P}\big(v = 0, h = 1\big)(1 - b_2)\\ 
	&= \rho\zeta + \rho(1-  \zeta)b_1 + \zeta(1 -\rho) (1-b_2).
	\end{align*}
	Similarly, we have
	\begin{align*}
	\mathbb{P}\big(h' = 1\big) 
	= \rho\zeta + \zeta(1-\rho) b_2 + \rho(1 - \zeta)(1 - b_1).
	\end{align*}
	Using $q = \frac{b_1}{b_2}$ and 
	$\red{\eqref{eq:bs}}$, we have $\mathbb{P}(v' = 1) = \zeta$ and $\mathbb{P}(h' = 1) = \rho$. Moreover, we have 
	\begin{equation*}
	\mathbb{P}\big(v' = 1, h' = 1\big) = \mathbb{P}\big(v = 1, h = 1\big) = \rho \zeta.
	\end{equation*}
	This concludes the lemma.  
\end{proof}

Note that for arbitrary $t \in \mathbb{Z}_{\geq 0}$, the map $\tau_t: \mathcal{S} \to \mathcal{S}$ is bijective and $\tau_t = \tau_{-t}^{-1}$. For arbitrary $\mu \in \MM_1 (\SS)$, we define $\mu \circ \tau_t \in \MM_1 (\SS)$ such that $(\mu\circ \tau_t)(A) = \mu (\tau_t A)$ for any measurable set $A$ in $\SS$. Note that the definition of $\mu \circ \tau_t$ is consistent the definition of $\mu \circ \tau^{-1}_{-t}$ in the second paragraph of Section \ref{sec:translationinvariant}.    
\begin{lemma}\label{lem:currentdis}	
Suppose that we have $\red{\eta} \sim (\mu_* \circ \tau_t) = \otimes_{k \in \ZZ} \Ber(\frac{q^{-k+t}}{1 + q^{-k+t}})$, then for arbitrary $y \in \mathbb{Z}$, we have that $\red{K_y} \sim \text{Ber}\big(\frac{q^{-y+t}}{1 + q^{-y+t}})$.
\end{lemma}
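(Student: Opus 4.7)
The plan is to take expectations in the recursion of Lemma \ref{lem:currentrec} and solve the resulting linear recursion, exploiting that $K_y \in \{0,1\}$ so that its distribution is pinned down by its mean. Write $\rho_k := q^{-k+t}/(1+q^{-k+t})$, so that $\mu_* \circ \tau_t = \bigotimes_k \mathrm{Ber}(\rho_k)$ and $\rho_k/(1-\rho_k) = q^{-k+t}$. The latter gives the key algebraic identity
\begin{equation*}
\rho_{k-1}(1-\rho_k) = q\rho_k(1-\rho_{k-1}), \qquad k \in \mathbb{Z}.
\end{equation*}

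Next, I would observe that $\mathbb{E}^\eta[K_{y-1}]$ depends only on $\eta(x)$ for $x \leq y-1$, since $K_{y-1}$ is determined by the left-to-right sequential update at vertices $(x,0)$ with $x \leq y-1$ (the configuration strictly to the right of $y-1$ cannot influence whether a particle crosses $y-1$; the infinite-particle construction of Section \ref{sec:infinite} makes this precise). Because $\mu_* \circ \tau_t$ is a product measure, $\eta(y)$ is independent of $\{\eta(x)\}_{x \leq y-1}$, hence of $\mathbb{E}^\eta[K_{y-1}]$. Averaging Lemma \ref{lem:currentrec} against this product measure therefore produces the linear recursion
\begin{equation*}
\mathbb{E}[K_y] \;=\; \rho_y(1-b_1) \;+\; \alpha_y\, \mathbb{E}[K_{y-1}], \qquad \alpha_y := b_1\rho_y + b_2(1-\rho_y).
\end{equation*}

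I would then verify by direct substitution that $\mathbb{E}[K_y] = \rho_y$ solves this recursion: the equation $\rho_y = \rho_y(1-b_1) + \alpha_y\rho_{y-1}$ simplifies to $b_1\rho_y(1-\rho_{y-1}) = b_2\rho_{y-1}(1-\rho_y)$, which, after dividing by $b_2$, is precisely the identity above (using $q = b_1/b_2$). To argue uniqueness of the bounded solution, set $\varepsilon_y := \mathbb{E}[K_y] - \rho_y$, which satisfies the homogeneous recursion $\varepsilon_y = \alpha_y\varepsilon_{y-1}$. Since $K_y, \rho_y \in [0,1]$, we have $|\varepsilon_{y-N}| \leq 1$ for every $N \geq 0$, and $\alpha_k \in [\min(b_1,b_2), \max(b_1,b_2)]$ is bounded away from $1$, hence iterating gives $|\varepsilon_y| \leq \max(b_1,b_2)^N \to 0$ as $N \to \infty$. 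Therefore $\mathbb{E}[K_y] = \rho_y$, and since $K_y$ is $\{0,1\}$-valued this is equivalent to $K_y \sim \mathrm{Ber}(q^{-y+t}/(1+q^{-y+t}))$.

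The only genuinely subtle point is justifying the measurability assertion that $\mathbb{E}^\eta[K_{y-1}]$ depends only on $\eta|_{(-\infty,y-1]}$, which in the infinite-particle setting requires invoking the localization of the dynamics from Section \ref{sec:infinite}; all other steps are algebraic. I note that the argument does not use Lemma \ref{lem:bs} directly, but reproduces the same balance relation that drives it, so the lemma serves as a sanity check that the computed densities are consistent with the pseudo-stationarity identity.
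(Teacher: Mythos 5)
Your proof is correct and follows essentially the same route as the paper: average the recursion of Lemma \ref{lem:currentrec} against the product measure (using that $\mathbb{E}^\eta[K_{y-1}]$ depends only on $\eta|_{(-\infty,y-1]}$ and is hence independent of $\eta(y)$), observe that $\rho_y$ is a fixed point, and kill the homogeneous part by iterating the contraction $\alpha_y=b_1\rho_y+b_2(1-\rho_y)\le\max(b_1,b_2)<1$ to $-\infty$. The paper writes the same coefficient as $\frac{(q^{-y+1+t}+1)b_2}{1+q^{-y+t}}$ and subtracts $\rho_y$ from both sides rather than phrasing it as particular-plus-homogeneous solution, but the argument is identical.
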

\begin{proof}
	It suffices to prove that $\mathbb{E}^{\mu_* \circ \tau_t}[K_y] := \int \mathbb{E}^{\eta} [K_y]\, (\mu_* \circ\tau_t)(d\eta) = \frac{q^{-y+t}}{1 + q^{-y+t}}$.
	Using Lemma \ref{lem:currentrec}, we have 
\begin{align*}
		\mathbb{E}^{\eta}[K_y] 
		&= \mathbbm{1}_{\{\eta(y) = 1\}}  \Big(\mathbb{E}^{\eta}[K_{y-1}] +  \big(1 - \mathbb{E}^{\eta}[K_{y-1}]\big) (1 - b_1)\Big) + \mathbbm{1}_{\{\eta (y) = 0\}} \mathbb{E}^{\eta}[K_{y-1}] b_2.
		\end{align*}
\red{Integrating both sides above (as a function $\eta$) with respect to the probability measure $\mu_* \circ \tau_t$. Note that $\mathbb{E}^\eta[K_{y-1}]$ only depends on $\{\eta(z)\}_{z \leq y-1}$, this implies that when $\eta \sim \mu_* \circ \tau_t$, $\mathbb{E}^{\eta}[K_{y-1}]$ is independent with $\eta(y)$. We have}  
\begin{align*}
&\mathbb{E}^{\mu_* \circ \tau_t}[K_y]\\ 
&= (\mu_* \circ \tau_t) (\eta(y) = 1)  \Big(\mathbb{E}^{\mu_* \circ \tau_t}[K_{y-1}] +  \big(1 - \mathbb{E}^{\mu_* \circ \tau_t} [K_{y-1}]\big) \big(1 - b_1\big)\Big) + (\mu_* \circ \tau_t) (\eta (y) = 0) \mathbb{E}^{\mu_* \circ \tau_t}[K_{y-1}] b_2
\end{align*}
We have $(\mu_* \circ \tau_t)(\eta(y) = 1) = \mu_* (\eta(y - t) = 1) = \frac{q^{-y+t}}{1 + q^{-y+t}}$ and $(\mu_* \circ \tau_t)(\eta(y) = 0) = \frac{1}{1 + q^{-y+t}}$. Therefore, 
\begin{align*}
\mathbb{E}^{\mu_* \circ \tau_t}[K_y]
&=\frac{ q^{-y+t}}{1 + q^{-y+t}} \Big(\mathbb{E}^{\mu_* \circ \tau_t}[K_{y-1}] + \big(1 - \mathbb{E}^{\mu_* \circ \tau_t}[K_{y-1}]\big) \big(1 - b_1\big)\Big) + \frac{1}{1 + q^{-y+t}} \mathbb{E}^{\mu_* \circ \tau_t}[K_{y-1}] b_2\\
&= \frac{(q^{-y+1+t} + 1) b_2}{1 + q^{-y+t}}\mathbb{E}^{\mu_* \circ \tau_t}[K_{y-1}] + \frac{(1 -qb_2) q^{-y+t}}{1 + q^{-y+t}}.
\end{align*}
This implies that 
	\begin{equation}\label{eq:currentrecur}
	\mathbb{E}^{\mu_* \circ \tau_t}[K_y] - \frac{q^{-y+t}}{1+  q^{-y+t}} = \frac{( q^{-y+1+t} + 1) b_2 }{1 + q^{-y+t} }\Big(\mathbb{E}^{\mu_* \circ \tau_t}[K_{y-1}] - \frac{q^{-y+1+t}}{1 + q^{-y+1+t}}\Big).
	\end{equation}
	Iterating \eqref{eq:currentrecur} yields that for any $x < y$, we have 
	\begin{equation}\label{eq:iteration}
	\mathbb{E}^{\mu_* \circ \tau_t} [K_y] - \frac{ q^{-y+t}}{1+  q^{-y+t}} = \prod_{i = x+1}^y \frac{( q^{-i+1+t} + 1) b_2}{1 +  q^{-i+t} } \Big(\mathbb{E}^{\mu_* \circ \tau_t}[K_x] - \frac{ q^{-x+t}}{1 +  q^{-x+t}}\Big).
	\end{equation}
Recall that $q = \frac{b_1}{b_2}$, it is straightforward \red{verify that}
	\begin{equation*}
\sup_{i \in \mathbb{Z}}	\frac{( q^{-i+1+t} + 1) b_2}{1 +  q^{-i+t} } = \max(b_1, b_2) < 1. 
	\end{equation*}
Fix arbitrary $y$ and let $x \to \red{-}\infty$ in \eqref{eq:iteration}, the product \red{therein} tends to zero. Hence, $\mathbb{E}^{\mu_* \circ \tau_t}[K_y] =  \frac{ q^{-y+t}}{1 +  q^{-y+t}}$ for arbitrary $y \in \mathbb{Z}$. Thus, we conclude the lemma.  
\end{proof}
\begin{prop}\label{prop:mu*mfI}
	We have
	$\mu_* \in \mathfrak{I}.$  As a consequence, for arbitrary $n \in \mathbb{Z}$, $\mu_{(n)} \in \mathfrak{I}$.
\end{prop}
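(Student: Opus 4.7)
The plan is to show $\mu_* \in \mathfrak{I}$ by verifying that if $\eta_0 \sim \mu_*$, then $\eta_1 \sim \mu_* \circ \tau_1$, which is equivalent to $\tau(\eta_1) \sim \mu_*$ and hence to $\mu_*$ being invariant under the shifted semigroup. Set $\rho_k := \frac{q^{-k}}{1 + q^{-k}}$ and $\zeta_k := \rho_{k-1}$, so that $\zeta_{k+1} = \rho_k$. A direct check using $q = b_1/b_2$ confirms that $(\rho_k, \zeta_k)$ satisfies the hypothesis $q\rho(1-\zeta) = \zeta(1-\rho)$ of Lemma \ref{lem:bs}. I view the S6V update as a vertex model: at column $k$ the vertex has vertical input $\eta_0(k)$, horizontal input $K_{k-1}$, and independent column randomness $R_k$, producing outputs $\eta_1(k)$ and $K_k$. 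By the left-to-right sampling structure, $K_{k-1}$ is measurable with respect to $(\eta_0(j), R_j)_{j < k}$, and hence is independent of $\eta_0(k)$ by the product structure of $\mu_*$ and the mutual independence of $(R_j)_j$. Combined with Lemma \ref{lem:currentdis}, which gives $K_{k-1} \sim \mathrm{Ber}(\zeta_k)$, one obtains $(\eta_0(k), K_{k-1}) \sim \mathrm{Ber}(\rho_k) \otimes \mathrm{Ber}(\zeta_k)$ with independent components, so Lemma \ref{lem:bs} applies at every column.

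The core step is the inductive claim: for every integer $k$, $(\eta_1(j))_{j \leq k}$ is distributed as $\bigotimes_{j \leq k} \mathrm{Ber}(\zeta_j)$, is independent of $K_k$, and $K_k \sim \mathrm{Ber}(\rho_k)$. The base case, starting at any fixed column, follows from the preceding paragraph by one application of Lemma \ref{lem:bs}. For the inductive step from $k$ to $k+1$, the inductive hypothesis yields $K_k \sim \mathrm{Ber}(\zeta_{k+1})$ and $K_k \perp \eta_0(k+1)$, so Lemma \ref{lem:bs} gives $(\eta_1(k+1), K_{k+1}) \sim \mathrm{Ber}(\zeta_{k+1}) \otimes \mathrm{Ber}(\rho_{k+1})$ with $\eta_1(k+1) \perp K_{k+1}$. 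Since $(\eta_0(k+1), R_{k+1})$ is independent of everything measurable with respect to columns $\leq k$, the pair $(\eta_1(k+1), K_{k+1})$ is conditionally independent of $(\eta_1(j))_{j \leq k}$ given $K_k$; combined with $(\eta_1(j))_{j \leq k} \perp K_k$ from the hypothesis, a direct conditioning calculation upgrades this to unconditional independence $(\eta_1(j))_{j \leq k} \perp (\eta_1(k+1), K_{k+1})$. Iterating, $\eta_1$ restricted to any finite window is a product of Bernoullis with the prescribed parameters, and by Kolmogorov extension $\eta_1 \sim \bigotimes_{j \in \mathbb{Z}} \mathrm{Ber}(\zeta_j) = \mu_* \circ \tau_1$, which proves $\mu_* \in \mathfrak{I}$.

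For $\mu_{(n)} \in \mathfrak{I}$, I observe that the functional $\Phi(\eta) := \sum_{i \geq 1} \eta(i) - \sum_{i \leq 0}(1 - \eta(i))$ is conserved under the shifted S6V dynamics. In the shifted frame each particle's one-step displacement lies in $\{-1, 0, 1, 2, \ldots\}$, and every crossing of the boundary between $\{i \leq 0\}$ and $\{i \geq 1\}$ (in either direction) simultaneously changes the particle count in $\{i \geq 1\}$ and the hole count in $\{i \leq 0\}$ by the same signed amount, leaving $\Phi$ invariant. Hence each $A_n = \{\Phi = n\}$ is an invariant set, and since $\mu_*(A_n) > 0$ and $\mu_*$ is stationary, the standard disintegration argument (using invariance of $A_n$) shows that the conditional measure $\mu_{(n)} = \mu_*(\,\cdot \mid A_n)$ is stationary.

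The main technical obstacle is that for $q > 1$ the measure $\mu_*$ is supported on configurations with infinitely many particles piling up at $-\infty$, so a ``leftmost-particle'' argument cannot initialize the column-by-column induction. Lemma \ref{lem:currentdis} is precisely the ingredient that sidesteps this difficulty by furnishing the exact marginal distribution of the horizontal input $K_{k-1}$ at any column $k$, allowing the induction to launch from any finite starting position.
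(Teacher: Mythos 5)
Your proof is correct and follows essentially the same route as the paper: both establish $\mu_* \in \mathfrak{I}$ by combining Lemma \ref{lem:currentdis} (the exact Bernoulli law of the current, which substitutes for the missing leftmost particle when $q>1$) with the local pseudo-stationarity of Lemma \ref{lem:bs}, applied column by column using the independence of $K_{k-1}$ from $\eta_0(k)$, and both deduce $\mu_{(n)} \in \mathfrak{I}$ from the invariance of the sets $A_n$ under the shifted dynamics. Your explicit conservation-of-$\Phi$ argument and the independence bookkeeping in the induction only spell out what the paper leaves as ``keep applying Lemma \ref{lem:bs}'' and ``straightforward to check.''
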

\begin{proof}
We need to show that $\tau_t(\eta_t) \sim \mu_*$ implies $\tau_{t+1}(\eta_{t+1}) \sim \mu_*$.	
It suffices to show  that if $\eta_t \sim \mu_* \circ \tau_{t}$, then we have $\eta_{t+1} \sim \mu_* \circ \tau_{t+1}$. Fix arbitrary $y \in \mathbb{Z}$. Given $\eta_t \sim \mu_* \circ \tau_t$,
using Lemma \ref{lem:currentdis} \red{with $\eta =  \eta_t$}, we know that $K_{y-1} \sim \text{Ber}(\frac{\red{q^{-y+1+t}}}{1 +  q^{-y+1 + t}})$. Note that for all $y \in \mathbb{Z}$, $K_y$ is given by \eqref{eq:current} with $\eta = \eta_t$.
	
Since $K_{y-1}$ only depends on $\eta_{t} (z), z \leq y-1$, we have that $K_{y-1}$ and $\eta_t (y)$ are independent. 
Set $h = K_{y-1}$, $v = \eta_t (y)$, $h' = K_y$, $v' = \eta_{t+1}(y)$ and $\rho = \frac{ q^{-y+t}}{1 +  q^{-y+t}}$, $\zeta = \frac{q^{-y+1+t}}{1 +  q^{-y+1+t}}$, we apply Lemma \ref{lem:bs} and get $K_y \sim \text{Ber}(\frac{ q^{-y+t}}{1 + q^{-y+t}})$, $\eta_{t+1} (y) \sim \text{Ber}(\frac{ q^{-y+1+t}}{1 +  q^{-y+1+t}})$. In addition, $K_y$ and $\eta_{t+1}(y)$ are independent. We keep applying Lemam 6.1 to location $y+1, y+2, \dots$, and we  conclude that the random variables $\{\eta_{t+1}(z)\}_{z \geq y}$ are independent with $\eta_{t+1}(z) \sim \text{Ber}(\frac{ q^{-z+1+t}}{1 +  q^{-z+1+t}})$. Since $y$ is arbitrary, we conclude that
$$\eta_{t+1} \sim \bigotimes_{k \in \mathbb{Z}} \text{Ber}(\frac{ q^{-k+1+t}}{1 +  q^{-k+1+t}}) = \mu_* \circ \tau_{t+1}.$$ 
This implies that $\mu_* \in \mfI$.
	
We proceed to show that $\mu_{(n)} \in \mfI$. Without loss of generality, we assume that $q > 1$. Recall that $\mu_*$ is supported on the set $A$ defined in \eqref{eq:setA} and recall the sets $A_n$ from \eqref{eq:setAn}. 	
	Note that $A = \cup_{n = -\infty}^\infty A_n$, moreover, it is straightforward to check that $A_n$ is an irreducible subspace of the shifted S6V model $(\tau_t \circ \eta_t)_{t \in \mathbb{Z}_{\geq 0}}$. Since we have proved that $\mu_* \in \mfI$, we conclude that $\mu_{(n)} \in \mfI$ for every $n \in \mathbb{Z}$. 
\end{proof}
\section{Uniqueness of blocking measures and Proof of Theorem \ref{thm:s6vmoving}}\label{sec:classify}
In this section, we show that there is no extremal stationary distribution for the shifted S6V model other than $\{\mu_\rho\}_{\rho \in [0, 1]}$ and $\{\mu_{(n)}\}_{n \in \mathbb{Z}}$ and conclude Theorem \ref{thm:s6vmoving}. Note that we can couple two shifted S6V models in a similar way as Definition \ref{def:coupling} by looking at the coupled S6V model under a moving frame. We use $(\wtil{\mathsf{S}}_{\red{t}})_{\red{t \in \mathbb{Z}_{\geq 0}}}$ to denote the semigroup of the coupled shifted S6V model. \red{As stated in Theorem \ref{thm:s6vmoving}, we assume $q = \frac{b_1}{b_2} \neq 1$ throughout the section.} 
\begin{lemma}\label{lem:blocking}
	If $\nu > \nu \circ \tau^{-1}$, then we have 
	\begin{equation}\label{eq:deninfty}
	\lim_{n \to -\infty} \nu \big(\eta(n) = 1\big) = 1, \qquad \lim_{n \to \infty} \nu\big(\eta(n) = 1\big) = 0.
	\end{equation} 
	If $\nu < \nu \circ \tau^{-1}$, then we have 
	\begin{equation*}
	  \lim_{n \to -\infty} \nu\big(\eta(n) = 1\big) = 0, \qquad  \lim_{n \to \infty} \nu\big(\eta(n) = 1\big) = 1.
	\end{equation*}
\end{lemma}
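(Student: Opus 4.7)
The plan is to unpack the hypothesis $\nu > \nu \circ \tau^{-1}$ into a single coupling, then read off the limits directly from an expected-count identity. This sidesteps any current analysis or further comparison with product Bernoulli measures; the decisive point is that on $\SS = \{0,1\}^{\ZZ}$ the discrepancies in a monotone coupling are indicator-valued, so a trivial telescoping controls their total expected count.

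First I would invoke Definition \ref{def:ordering} to fix a coupling $\lambda \in \MM_1(\SS \times \SS)$ whose first marginal is $\nu \circ \tau^{-1}$, whose second marginal is $\nu$, and which is supported on $\{(\eta,\xi) : \eta \leq \xi \text{ and } \eta \neq \xi\}$. Writing $\rho_k := \nu(\eta(k) = 1)$, the identity $(\nu \circ \tau^{-1})(\eta(k) = 1) = \rho_{k+1}$ combined with the $\lambda$-a.s.\ inequality $\eta(k) \leq \xi(k)$ forces $\rho_{k+1} \leq \rho_k$. Hence $(\rho_k)$ is non-increasing, and the limits $a := \lim_{k \to +\infty} \rho_k$ and $b := \lim_{k \to -\infty} \rho_k$ exist with $0 \leq a \leq b \leq 1$.

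Next I would introduce the discrepancy count $N := \#\{k \in \ZZ : \xi(k) > \eta(k)\}$. Because $\eta \leq \xi$ and both take values in $\{0,1\}$, the event $\xi(k) > \eta(k)$ is exactly $\{\xi(k) = 1, \eta(k) = 0\}$; moreover $\{\eta(k) = 1\} \subseteq \{\xi(k) = 1\}$ $\lambda$-a.s., so $\mbP^\lambda(\xi(k) = \eta(k) = 1) = \rho_{k+1}$ and hence $\mbP^\lambda(\xi(k) > \eta(k)) = \rho_k - \rho_{k+1}$. Applying Tonelli to the non-negative sum gives $\mathbb{E}^\lambda[N] = \sum_{k \in \ZZ}(\rho_k - \rho_{k+1}) = b - a$ by telescoping of the bounded monotone sequence. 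On the other hand, the strict inequality $\eta \neq \xi$ $\lambda$-a.s., combined with $\eta \leq \xi$, implies $N \geq 1$ $\lambda$-a.s., and therefore $\mathbb{E}^\lambda[N] \geq 1$. Hence $b - a \geq 1$, and the constraints $a \geq 0$, $b \leq 1$ pin down $a = 0$ and $b = 1$. The case $\nu < \nu \circ \tau^{-1}$ is handled by the same argument with the roles of $\eta$ and $\xi$ interchanged.

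The main obstacle to watch for is essentially bookkeeping rather than conceptual: one must use that on $\{0,1\}^{\ZZ}$ the identity $\mbP^\lambda(\xi(k) = \eta(k) = 1) = \rho_{k+1}$ is forced by the coupling and the Bernoulli nature of $\eta(k), \xi(k)$, which is what turns the discrepancy count into the telescoping $b - a$. This is precisely the site-wise dichotomy that, as Remark \ref{rmk:whycant} points out, fails when each site can hold more than one particle, motivating the weaker conclusion for the SHS6V model in Theorem \ref{thm:blockingHS}.
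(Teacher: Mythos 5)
Your proof is correct and follows essentially the same route as the paper's: both arguments extract the coupling guaranteed by the definition of the partial order, telescope the site marginals $\rho_k - \rho_{k+1}$ over an interval, and use the almost-sure presence of at least one discrepancy to force the telescoped total to be at least $1$, which pins the two limits at $1$ and $0$. Your version merely reorganizes this by first noting monotonicity of $\rho_k$ and summing the discrepancy probabilities over all of $\ZZ$ via Tonelli, whereas the paper takes a $\liminf$ of the expected discrepancy count on $[-n,n]$; the content is identical.
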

\begin{proof}
	It suffices to explain the proof when  $\nu > \nu \circ \tau^{-1}$. We have 
	\begin{align*}
	1 \leq \liminf_{n \to \infty}\mathbb{E}^{\nu}\Big[\sum_{m = -n}^n \eta(m)\Big] - \mathbb{E}^{\nu \circ \tau^{-1}}\Big[\sum_{m = -n}^n \eta(m)\Big]
	& = \liminf_{n \to \infty}\mathbb{E}^{\nu}\Big[\sum_{m = -n}^n \eta(m)\Big] - \mathbb{E}^{\nu}\Big[\sum_{m = -n}^n \eta(m+1)\Big]\\
	&= \liminf_{n \to \infty} \nu\big(\eta(-n) = 1\big) - \nu\big(\eta(n+1) = 1\big) \leq 1. 
	\end{align*}
The inequality on the left hand side is due to $\nu > \nu \circ \tau^{-1}.$	
	As a consequence, we know that $$ \liminf_{n \to \infty} \nu\big(\eta(-n) = 1\big) - \nu\big(\eta(n+1) = 1\big) = 1.$$
Since the probability measure takes value between $0$ and $1$, we conclude \eqref{eq:deninfty}. The proof for the situation $\nu < \nu \circ \tau^{-1}$ is similar.
\end{proof}

\red{
\begin{lemma}\label{lem:temp3}
Let $\nu \in \mfI_e$. We have either $\nu > \nu \circ \tau^{-1}$, $\nu < \nu \circ \tau^{-1}$ or $\nu = \nu \circ \tau^{-1}$. 
\end{lemma}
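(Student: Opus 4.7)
The plan is to mirror the proof of Corollary \ref{cor:shift}, substituting the shifted S6V model for the S6V model throughout. The two key ingredients are a shifted analog of Proposition \ref{prop:coupled} and the attractivity Lemma \ref{lem:mono}, both of which transfer with essentially no change.

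First I would observe that if $\nu \in \mfI_e$ then $\nu \circ \tau^{-1} \in \mfI_e$ as well. This uses that the shifted S6V transition kernel commutes with $\tau$: a direct inspection of Definition \ref{def:shifted} together with the translation invariance of the S6V transition kernel shows that shifting the initial configuration by $\tau$ and then running the shifted dynamics yields the same law as running the shifted dynamics and then shifting by $\tau$. Hence $\mfI$ is closed under $\nu \mapsto \nu \circ \tau^{-1}$, and extremality transfers by the standard convex combination argument.

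Next I would establish the shifted analog of Proposition \ref{prop:coupled}: for any $\nu \in \mfI$, any limit point $\lambda$ of $\frac{1}{n}\sum_{i=0}^{n-1}\theta_\nu \widetilde{\mathsf{S}}_i$ is stationary for the coupled shifted S6V model and satisfies
\[
\lambda\big(\{(\eta,\xi):\eta>\xi\}\cup\{(\eta,\xi):\eta=\xi\}\cup\{(\eta,\xi):\eta<\xi\}\big)=1.
\]
The proof should track Sections \ref{sec:translationinvariant}--\ref{sec:coupling} essentially verbatim. The reason is that discrepancies in the coupled shifted model are identical to those in the coupled unshifted model viewed in a moving frame, and they still travel as second-class particles in the auxiliary two-class processes. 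Proposition \ref{prop:coramol} applies to any $\widetilde{\tau}$-invariant stationary distribution of the coupled shifted model, because $\widetilde{\tau}$-invariance renders the deterministic frame shift trivial and makes such a measure simultaneously stationary for the coupled unshifted model. Lemma \ref{lem:phi} transfers verbatim since each $\mu_\rho$ remains stationary and ergodic for the shifted dynamics. The local coupling estimate of Lemma \ref{lem:axy} is purely a one-step statement and is unaffected. Lemma \ref{lem:discrepanciesnb2} only needs the threshold enlarged to $M > (1-\max(b_1,b_2))^{-1}$ to absorb the frame velocity into the large deviations bound on second-class particle displacement.

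With the shifted Proposition \ref{prop:coupled} in hand, apply it to the given $\nu \in \mfI_e$: the resulting $\lambda$ has marginals $\nu$ and $\nu \circ \tau^{-1}$, both extremal by the first step. Since Lemma \ref{lem:mono} holds for the coupled shifted dynamics (the deterministic shift $\widetilde{\tau}_t$ preserves the pointwise order on $\SS \times \SS$), each of the three sets $\{\eta>\xi\}$, $\{\eta=\xi\}$, $\{\eta<\xi\}$ is invariant under the coupled dynamics, and the decomposition $\lambda = \alpha_1 \lambda^{(1)} + \alpha_2 \lambda^{(2)} + \alpha_3 \lambda^{(3)}$ into the conditional distributions on these three sets produces three stationary distributions $\lambda^{(i)}$ for the coupled shifted model whose marginals belong to $\mfI$. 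Extremality of $\nu$ and of $\nu \circ \tau^{-1}$ forces each marginal of $\lambda^{(i)}$ (for $\alpha_i > 0$) to equal $\nu$ or $\nu \circ \tau^{-1}$, respectively. Since at least one $\alpha_i$ is positive, the corresponding $\lambda^{(i)}$ witnesses one of $\nu > \nu \circ \tau^{-1}$, $\nu = \nu \circ \tau^{-1}$, or $\nu < \nu \circ \tau^{-1}$ in the sense of Definition \ref{def:ordering}, which is the desired trichotomy. The main technical obstacle is verifying that the quantitative estimates of Lemmas \ref{lem:discrepanciesnb1} and \ref{lem:discrepanciesnb2} go through cleanly in the moving frame; this amounts to careful book-keeping of the frame velocity in the second-class particle speed bound.
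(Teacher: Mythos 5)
Your proposal is correct and is exactly the route the paper intends: the paper omits the proof of this lemma, stating only that it follows by repeating the arguments of Corollary \ref{cor:shift} and Proposition \ref{prop:coupled} for the shifted model, which is precisely what you carry out (including the key observations that $\widetilde{\tau}$-invariant stationary measures of the coupled shifted and unshifted models coincide, and that extremality plus attractivity yields the trichotomy). One trivial quibble: since the frame moves rightward, the original threshold $M > 1 + \frac{1}{1-\max(b_1,b_2)}$ in Lemma \ref{lem:discrepanciesnb2} already suffices (the moving frame only makes it harder for left-originating discrepancies to catch up), so no enlargement of $M$ is needed.
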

\begin{proof}
The proof of lemma is rather similar to what we have done for proving Corollary \ref{cor:shift} and Proposition \ref{prop:coupled}. The argument that works for the S6V model also applies similarly for the shifted S6V model. Hence the proof is omitted. 
\end{proof}
}

\red{Let $\mathcal{P}^c$ be the set of non-translation invariant distributions, which complements $\mathcal{P}$.}
\begin{lemma}\label{lem:order}
\red{Fix $q \neq 1$} and $\nu \in \mathfrak{I}_e \cap \mathcal{P}^c$. If $q > 1$, we have $\nu > \nu \circ \tau^{-1}$.
	If $q < 1$, we have $\nu < \nu \circ \tau^{-1}$. 
\end{lemma}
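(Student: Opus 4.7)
The plan is to assume without loss of generality that $q > 1$ (the $q < 1$ case is entirely symmetric), and to rule out $\nu < \nu \circ \tau^{-1}$; since Lemma \ref{lem:temp3} together with $\nu \in \mathcal{P}^c$ implies that exactly one of $\nu > \nu \circ \tau^{-1}$, $\nu < \nu \circ \tau^{-1}$ can hold, this would finish the proof. So suppose for contradiction that $q > 1$ and $\nu < \nu \circ \tau^{-1}$. Write $p_y := \nu(\eta(y) = 1)$ and $c_y := \mathbb{E}^\nu[K_y]$; Lemma \ref{lem:blocking} then gives $p_y \to 0$ as $y \to -\infty$ and $p_y \to 1$ as $y \to +\infty$.

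First I would extract a global identity. Running one step of the unshifted S6V model from $\eta_0 \sim \nu$, stationarity of $\nu$ for the shifted dynamics gives $\tau(\eta_1) = \eta'_1 \sim \nu$, and hence $\mathbb{E}^\nu[\eta_1(y)] = p_{y-1}$. Combining with Lemma \ref{lem:currentconserve}, which yields $\eta_1(y) = \eta_0(y) + K_{y-1} - K_y$, and taking expectations gives $p_{y-1} - p_y = c_{y-1} - c_y$, so $c_y - p_y$ is a constant in $y$. Since $c_y \in [0,1]$ while $p_y$ achieves both limits $0$ and $1$, this constant must equal $0$, i.e.\ $c_y = p_y$ for every $y \in \mathbb{Z}$.

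Next I would extract a pointwise vertex identity. Let $a_{ij} := \mathbb{P}^\nu(\eta_0(y) = i,\, K_{y-1} = j)$ for $i, j \in \{0, 1\}$. Reading off the weights from Figure \ref{fig:vertexconfig}, the output marginals are
\[
c_y = a_{11} + b_2\, a_{01} + (1 - b_1)\, a_{10}, \qquad p_{y-1} = a_{11} + (1 - b_2)\, a_{01} + b_1\, a_{10},
\]
while on the input side $p_y = a_{11} + a_{10}$. Substituting $c_y = p_y$ into the first identity yields $b_2\, a_{01} = b_1\, a_{10}$, i.e.\ $a_{01} = q\, a_{10}$. Plugging this into the second identity and using $q b_2 = b_1$,
\[
p_{y-1} - p_y \;=\; a_{01}(1 - b_2) - a_{10}(1 - b_1) \;=\; a_{10}\bigl[q(1 - b_2) - (1 - b_1)\bigr] \;=\; (q - 1)\, a_{10}.
\]

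For $q > 1$ this forces $p_y \leq p_{y-1}$ at every $y$, contradicting $p_{-\infty} = 0 < 1 = p_{+\infty}$. The main conceptual step, I expect, is the passage from the global stationarity of $\nu$ under the shifted dynamics to the pointwise vertex relation $a_{01} = q\, a_{10}$; this hinges on combining the boundary behavior supplied by Lemma \ref{lem:blocking} with the conservation law, which together pin down the conserved quantity $c_y - p_y$ to equal exactly zero. Once that is in hand, the vertex-weight bookkeeping producing $p_{y-1} - p_y = (q-1) a_{10}$ delivers the contradiction immediately.
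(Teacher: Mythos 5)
Your proof is correct and follows essentially the same route as the paper's: the trichotomy from Lemma \ref{lem:temp3}, the boundary densities from Lemma \ref{lem:blocking}, the conservation law of Lemma \ref{lem:currentconserve} to identify the mean current with the density, and the vertex weights at a single site to extract the relation $b_2\,\mathbb{P}^\nu(K_{y-1}=1,\eta_0(y)=0) = b_1\,\mathbb{P}^\nu(K_{y-1}=0,\eta_0(y)=1)$, yielding a monotonicity contradiction. The only cosmetic difference is that you pin down $\mathbb{E}^\nu[K_y] = \nu(\eta(y)=1)$ by observing that $c_y - p_y$ is constant and squeezing the constant between the two boundary limits, whereas the paper sends $y \to \infty$ in the interval conservation law after first arguing $\mathbb{P}^\nu(K_y=1)\to 1$; your version is, if anything, slightly cleaner.
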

\begin{proof}
\red{By Lemma \ref{lem:temp3}}, we know that for $\nu \in \mfI_e \cap \PP^c$, we have either $\nu > \nu \circ \tau^{-1}$ or $\nu < \nu \circ \tau^{-1}$. 
we only prove \red{that $q > 1 \Rightarrow \nu > \nu \circ \tau^{-1}$. The proof for $q < 1 \Rightarrow \nu < \nu \circ \tau^{-1}$ is similar. We argue by contradiction.} 

If  \red{$q > 1$ and} $\nu < \nu \circ \tau^{-1}$, by Lemma \ref{lem:blocking}, we have 
	\begin{equation*}
	\lim_{n \to \infty} \nu\big(\eta(n) = 1\big) = 1.
	\end{equation*}
	This shows that the density of particles tends to $1$ as we \red{move to the right}, thus $\lim_{y \to \infty} \mathbb{P}^{\nu} \big(K_y = 1\big) = 1$. Since $\nu$ is stationary for $\tau_t \circ \eta_t$,  
	we have for any $x < y$, 
	\begin{equation*}
	\mathbb{E}^{\nu}\Big[\sum_{k = x}^{y} \eta_0 (k)\Big] = \mathbb{E}^{\nu}\Big[\sum_{k = x+1}^{y+1} \eta_1 (k)\Big].
	\end{equation*}
This implies that 
\begin{equation*}
\mathbb{E}^{\nu}\Big[\sum_{k = x}^{y} \eta_{\red{1}} (k)\Big] - \mathbb{E}^{\nu}\Big[\sum_{k = x}^{y} \eta_{\red{0}} (k)\Big] = \red{\mathbb{E}^\nu[ \eta_1 (x) - \eta_1 (y+1)]}. 
\end{equation*}
Using Lemma \ref{lem:currentconserve}, we have
	\begin{equation}\label{eq:shiftstat}
	\mathbb{E}^{\nu}\big[K_{x-1}  - K_{y} \big] =  \mathbb{E}^{\nu} \big[\eta_1 (x) - \eta_1 (y+1)\big].
	\end{equation}
Since we have shown that $\lim_{y \to \infty}\mathbb{P}^{\nu}\big(\eta_1 (y+1) = 1\big) = \lim_{y \to \infty}\mathbb{P}^{\nu}\big(K_y  = 1\big) = 1$, by letting $y \to \infty$ in \eqref{eq:shiftstat}, we have $\mathbb{E}^{\nu} \big[\eta_1 (x)\big] = \mathbb{E}^{\nu}\big[K_{x-1}\big]$, which is equivalent to 
\begin{equation}\label{eq:blocking}
	\mathbb{P}^{\nu}\big(\eta_1 (x) = 1\big) = \mathbb{P}^{\nu} \big(K_{x-1} = 1\big).
\end{equation} 
Since $\eta_0 (x), K_{x-1}, \eta_1 (x), K_x$ are the number of lines on the bottom, left, top and right of the vertex at $(x, 0)$, we have 
	\begin{align*}
\red{\mathbb{P}^{\nu} \big(K_{x-1} = 1\big)} &= 	\mathbb{P}^{\nu}\big(\eta_1 (x) =1\big)\\ 
&= \mathbb{P}^{\nu}\big(K_{x-1} = 1, \eta_0 (x) = 1\big) + b_1 \mathbb{P}^{\nu}\big(K_{x-1} = 0, \eta_0 (x) = 1\big) + (1 - b_2) \mathbb{P}^{\red{\nu}}\big(K_{x-1} = 1, \eta_0 (x) = 0\big).
	\end{align*}
This yields	that $b_2 \mathbb{P}^{\nu} \big(K_{x-1} = 1, \eta_0 (x) = 0\big) = b_1 \mathbb{P}^{\nu} \big(K_{x-1} = 0, \eta_0 (x) = 1\big)$. Since $q > 1$, we have $b_1 > b_2$ and thus 
\begin{equation*}
\mathbb{P}^{\nu}\big(K_{x-1} = 1, \eta_0 (x) = 0\big) > \mathbb{P}^{\nu}\big(K_{x-1}(\eta_0) = 0, \eta_0 (x) = 1\big).
\end{equation*} 
\red{Adding $\mathbb{P}^\nu \big(K_{x-1} = \eta_0 (x) = 1\big)$ to both sides above}, we have \begin{equation*}
\mathbb{P}^{\nu}\big(K_{x-1} = 1\big) > \mathbb{P}^{\nu} \big(\eta_0 (x) = 1\big).
\end{equation*} 
By \eqref{eq:blocking} and the stationarity of $\nu$ for the shifted S6V model $(\eta_t')_{t \in \mathbb{Z}_{\geq 0}} = (\tau_t (\eta_t))_{t \in \mathbb{Z}_{\geq 0}}$, we know that $\mathbb{P}^\nu \big(\eta_0 (x-1) \red{\  =1\big)} = \mathbb{P}^\nu \big(\eta_1 (x)   =1\big) =  \mathbb{P}^{\nu}\big(K_{x-1} = 1\big)$, thus $\mathbb{P}^\nu \big(\eta_0 (x-1) \red{\  = 1\big)} > \mathbb{P}^{\nu} \big(\eta_0 (x) = 1\big)$\red{.}  However, this contradicts 
	the assumption that $\nu < \nu \circ \tau^{-1}$. Hence $q > 1$ implies that $\nu > \nu \circ \tau^{-1}$.
\end{proof}
\red{
\begin{lemma}\label{lem:ordering}
For two different probability measures $\nu_1, \nu_2 \in \mfI_e \cap \mathcal{P}^c$, we have either $\nu_1 > \nu_2$ or $\nu_1 < \nu_2$. 
\end{lemma}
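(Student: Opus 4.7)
The plan is to couple the two distributions $\nu_1, \nu_2$ under the higher rank coupling adapted to the shifted S6V model, analyze the discrepancies in the coupled process, and finally invoke attractivity together with extremality.

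First, I would apply the higher rank coupling from Definition \ref{def:coupling} in the moving frame. Let $\theta \in \MM_1(\SS \times \SS)$ be any coupling with marginals $\nu_1, \nu_2$, and consider the Ces\`aro average $\frac{1}{n} \sum_{i=0}^{n-1} \theta \wtil{\mathsf{S}}_i$. By compactness of $\MM_1(\SS \times \SS)$ under weak convergence, a subsequential limit $\lambda$ exists, and one readily checks (as in the proof of Proposition \ref{prop:coupled}) that $\lambda$ is stationary for the coupled shifted S6V model and has marginals $\nu_1, \nu_2$.

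The main step is to show that $\lambda\bigl(\{(\eta, \xi): \eta \geq \xi\} \cup \{(\eta, \xi): \eta \leq \xi\}\bigr) = 1$. This parallels Proposition \ref{prop:coramol} but must be executed without translation invariance of $\lambda$. Here I would use Lemma \ref{lem:blocking}: assuming $q > 1$ (the case $q < 1$ is analogous), Lemma \ref{lem:order} forces both $\nu_1, \nu_2$ to satisfy $\nu_i > \nu_i \circ \tau^{-1}$, hence the marginal density tends to $1$ at $-\infty$ and to $0$ at $+\infty$ for each. Consequently, $\mathbb{P}^\lambda\bigl(\eta(x) \neq \xi(x)\bigr) \to 0$ at both $\pm\infty$, so in expectation the discrepancies are concentrated in a moderate region. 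Under the coupled dynamics, discrepancies travel as second-class particles (see Remark \ref{rmk:seccoupled}), can only be annihilated pairwise when opposite types collide, and are never created. Using Lemma \ref{lem:axy} in the moving frame, any configuration in $A_{x,y}$ from \eqref{eq:Axy} admits positive conditional probability of pairwise annihilation within one step. Combining this with the stationarity of $\lambda$ and a conservation argument -- the expected discrepancy count in any finite interval is preserved in time while the influx per step is uniformly bounded by $2$ and the annihilation contribution is strictly positive whenever $\lambda(A_{x, y}) > 0$ -- would yield $\lambda(A_{x,y}) = 0$ for every $x < y$, forcing comparability $\lambda$-almost surely.

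Once comparability is established, the attractivity of the higher rank coupling (Lemma \ref{lem:mono}, which carries over to the shifted setting since the shift commutes with the pointwise partial order) implies that the three sets $\{\eta > \xi\}$, $\{\eta = \xi\}$, $\{\eta < \xi\}$ are each preserved by the coupled dynamics. Hence the normalized restrictions of $\lambda$ to each are stationary distributions whose marginals are in $\mfI$. Projecting to the two coordinates yields convex decompositions of $\nu_1$ and $\nu_2$ into stationary measures; by the extremality of $\nu_1, \nu_2 \in \mfI_e$, each non-trivially weighted marginal component must coincide with $\nu_1$ (resp.\ $\nu_2$). Since $\nu_1 \neq \nu_2$, the diagonal component must have weight zero, and by Definition \ref{def:ordering} we conclude $\nu_1 > \nu_2$ or $\nu_1 < \nu_2$.

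The main obstacle is the second step: replicating the discrepancy non-coexistence conclusion of Proposition \ref{prop:coramol} without translation invariance. In the translation-invariant setting, one tiles space by arbitrarily many disjoint intervals and sums a uniform annihilation lower bound to contradict the bounded per-step influx. In the non-translation-invariant setting of $\lambda$, the tiling argument is unavailable, so this must be replaced by a bound exploiting the blocking-like density profile at $\pm\infty$ (which limits the total expected discrepancies) together with the strict positivity of the one-step annihilation probability on finite intervals given by Lemma \ref{lem:axy}.
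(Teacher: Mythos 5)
Your overall architecture is the same as the paper's: take the Ces\`aro limit $\lambda$ of the coupled shifted dynamics started from a coupling of $\nu_1$ and $\nu_2$, use Lemmas \ref{lem:order} and \ref{lem:blocking} to pin down the common density profile of $\nu_1,\nu_2$ at $\pm\infty$, kill coexisting discrepancy types via the annihilation bound of Lemma \ref{lem:axy}, and finish with attractivity plus extremality. However, the step you flag as ``the main obstacle'' is exactly the step you do not carry out, and your interim ``conservation argument'' is not valid as stated: stationarity of $\lambda$, an influx of at most $2$ discrepancies per step, and a strictly positive annihilation rate do not conflict with one another --- in Proposition \ref{prop:coramol} the contradiction only arises because translation invariance lets one tile with $m$ disjoint copies of $[x,y]$ and push the annihilation rate above $2$. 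The paper closes this gap by running the bookkeeping of Proposition \ref{prop:coupled} for the process started from $\theta = \nu_1 \otimes \nu_2$ rather than working with $\lambda$ directly: the matching asymptotic densities give that the expected number of discrepancies in $[-Mn, Mn]$ at time $0$ (hence, after adding the $O(\sqrt{n})$ influx, at time $\lfloor\sqrt{n}\rfloor$) is $o(n)$; the speed bound of Lemma \ref{lem:secondclass} gives the analogue of Lemma \ref{lem:discrepanciesnb2}, so the expected number of discrepancies visiting $[-n,n]$ during $[\lfloor\sqrt{n}\rfloor, n]$ is $o(n)$; whereas $\lambda(A_{x,y}) > 0$ would force at least $a_{x,y}\,\mathbb{E}^{\theta}\big[\sum_{i=\lfloor\sqrt{n}\rfloor}^{n-1}\mathbf{1}_{\{(\eta_i,\xi_i)\in A_{x,y}\}}\big] = \Omega(n)$ annihilations in that window, each of which is counted among those visits. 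So your plan is the right one, but the decisive $o(n)$-versus-$\Omega(n)$ comparison over the time window $[\lfloor\sqrt{n}\rfloor, n]$ --- which is what actually replaces the tiling --- still needs to be written out.
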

}
\begin{proof}
	The proof follows the spirit of the proof of Proposition \ref{prop:coupled}. 
\red{We assume that $q > 1$ without loss of generality.  
By Lemma \ref{lem:order}, we know that for each $i \in \{1, 2\}$, $\nu_i > \nu_i \circ \tau^{-1}$.} Using this and Lemma \ref{eq:deninfty}, we know that
\begin{equation}\label{eq:asympdensity}
\red{\lim_{n \to -\infty} \nu_i\big(\eta(n) = 1\big) = 1, \qquad \lim_{n \to \infty} \nu_i\big(\eta(n) = 1\big) = 0.}
\end{equation}

Now we consider the coupled shifted S6V model starting from the probability distribution $\red{\theta} := \nu_1 \otimes \nu_2$. \red{Recall that the semigroup of the coupled shifted S6V model $(\eta'_t,\xi'_t)_{t \in \mathbb{Z}_{\geq 0}}$ is given by $(\wtil{\mathsf{S}}_{\red{t}})_{\red{t \in \mathbb{Z}_{\geq 0}}}$.} Let $\lambda$ be a limit point of $\frac{1}{n}\sum_{i = 0}^{n-1} \red{\theta} \wtil{\mathsf{S}}_i$ as $n \to \infty$. Since $\nu_1, \nu_2 \in \mfI$, the marginals of $\lambda$ are respectively $\nu_1$ and $\nu_2$. By \eqref{eq:asympdensity}, the particle densities of $\nu_1$ and $\nu_2$ have the same $n \to \pm\infty$ asymptotic, this implies that 
\begin{equation*}
\red{\lim_{n \to \infty} \frac{1}{n} \mathbb{E}^{\theta}\Big[\#\{x \in [-M n, Mn] \cap \mathbb{Z}:  \eta'_{0} (x) \neq \xi'_{0} (x)\}\Big] = 0.}
\end{equation*}
\red{Since the number of discrepancies that enter $[-Mn, Mn]$ some time between $[0, \lfloor \sqrt{n} \rfloor]$ is at most $\mathcal{O}(\sqrt{n})$, we have}
\begin{equation*}
\lim_{n \to \infty} \frac{1}{n} \mathbb{E}^{\theta}\Big[\#\{x \in [-M n, Mn] \cap \mathbb{Z}:  \eta'_{\lfloor \sqrt{n} \rfloor} (x) \neq \xi'_{\lfloor \sqrt{n} \rfloor} (x)\}\Big] = 0.
\end{equation*}
On the other hand, it is straightforward to verify that the analogue of Lemma \ref{lem:discrepanciesnb2} holds for the shifted S6V model, i.e. fix $M  > 1 + \frac{1}{1 - \max(b_1, b_2)}$, we have $\mathbb{E}^{\theta}[\mathsf{N}_{-M n} (n)] = o(n)$, where $\N_y(n)$ is be the number of discrepancies in the coupled shifted S6V model 
that start from $(-\infty, y]$ at time $0$ and reach $[-n, n]$ between time $[\lfloor \sqrt{n} \rfloor, n]$. As in Proposition \ref{prop:liggettineq}, we let $w_{\sqrt{n}, n}$ be the number of discrepancies of the coupled shifted S6V models that visited $[-n, n]$ between time intervals $[\lfloor \sqrt{n} \rfloor, n]$. We have 
\begin{equation*}
w_{\sqrt{n}, n} \leq \red{\mathsf{N}}_{-Mn} (n) + \#\Big\{x \in [-M n, Mn] \cap \mathbb{Z}: \eta_{\lfloor \sqrt{n} \rfloor} (x) \neq \xi_{\lfloor \sqrt{n} \rfloor} (x)\Big\}.
\end{equation*} 
Therefore,
we have $\lim_{n \to \infty} \frac{1}{n} \mathbb{E}^{\theta} [w_{\sqrt{n}, n}] = 0$. 
Then the same argument as in the proof of Proposition \ref{prop:coupled} yields that $\lambda$ satisfies \eqref{eq:nodiscrepancy}. Since $\nu_1, \nu_2$ are different elements of $\mfI_e$, we conclude that either $\nu_1 > \nu_2$ or $\nu_1 < \nu_2$.
\end{proof}

\begin{lemma}\label{lem:mfIshiftinvariance}
We have $(\mfI \cap \PP)_e = \{\mu_\rho\}_{\rho \in [0, 1]}$\red{.}
\end{lemma}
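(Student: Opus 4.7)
The plan is to reduce the statement to Corollary~\ref{cor:agg} by establishing the set identity $\mfI \cap \PP = \II \cap \PP$, which expresses that moving-frame stationarity and ordinary stationarity coincide when restricted to translation invariant measures, and then simply noting that two identical convex sets have identical extreme sets.

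First I would unpack the shifted dynamics: if $S$ denotes the S6V one-step transition and $\widetilde S$ the one-step transition of the shifted model, then Definition~\ref{def:shifted} (together with $\eta'_0 = \eta_0$ and $\eta'_1 = \tau \eta_1$) yields $\mu \widetilde S = (\mu S) \circ \tau^{-1}$ for every $\mu \in \MM_1(\SS)$. Consequently, $\nu \in \mfI$ iff $(\nu S) \circ \tau^{-1} = \nu$.

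Now suppose $\nu \in \PP$, so $\nu \circ \tau^{-1} = \nu$. Since $\tau$ is a bijection of $\SS$, invariance under $\tau$ is equivalent to invariance under $\tau^{-1}$ (applying the pushforward $\tau$ to both sides of $\nu \circ \tau^{-1} = \nu$), hence also $\nu \circ \tau = \nu$. Therefore $(\nu S) \circ \tau^{-1} = \nu$ is equivalent to $\nu S = \nu \circ \tau = \nu$, i.e., $\nu \in \II$. This proves $\mfI \cap \PP = \II \cap \PP$ as convex subsets of $\MM_1(\SS)$, so their sets of extreme points agree: $(\mfI \cap \PP)_e = (\II \cap \PP)_e$. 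Corollary~\ref{cor:agg} then gives $(\II \cap \PP)_e = \{\mu_\rho\}_{\rho \in [0,1]}$, completing the proof. No substantive obstacle appears here; the entire argument is a formal identification of what ``moving frame of speed $1$'' means on translation invariant measures, with all the nontrivial input already packaged in Corollary~\ref{cor:agg}.
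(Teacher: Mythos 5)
Your proposal is correct and follows essentially the same route as the paper, which simply notes $\mfI \cap \PP = \II \cap \PP$ and invokes Corollary~\ref{cor:agg}; you merely spell out the one-line identification of the shifted semigroup with $\mu \mapsto (\mu S)\circ\tau^{-1}$ that the paper leaves implicit.
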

\begin{proof}
Note that we have $\mfI \cap \PP = \II \cap \PP$. The result then follows from Corollary \ref{cor:agg}.
\end{proof}
\red{We proceed to prove Theorem \ref{thm:s6vmoving}.}
\begin{proof}[Proof of Theorem \ref{thm:s6vmoving}]
	For arbitrary fixed $n \in \mathbb{Z} \cup \{-\infty, \infty\}$, let $\delta_n$ \red{be} a delta probability measure which is supported on the  particle configuration $\eta$, where $\eta(x) = \mathbbm{1}_{\{x \leq n\}}$ for every $x \in \mathbb{Z}$. In particular, $\delta_\infty$ denotes the particle configuration where there is a particle everywhere and $\delta_{-\infty}$ denotes the particle configuration where there is no particle.

Let us show an ordering for the blocking measures. Recall that $\mu_{(n)}$ is the stationary distribution for the shifted S6V model $(\eta'_t)_{t \in \mathbb{Z}_{\geq 0}} = (\tau_t(\eta_t))_{t \in \mathbb{Z}_{\geq 0}}$ on the 
countable state space $A_n$. Since $\eta'_t$ is irreducible on $A_n$, $\eta'_t$ with initial distribution $\mu_{(n)}$ is positive recurrent.  
This implies 
that if we start the S6V model with initial distribution $\delta_n$ and let time $t$ goes to infinity, the distribution of $\eta_t$ converges to $\mu_{(n)}$. In other words, for every $n \in \mathbb{Z}$, $\lim_{t \to \infty} \delta_n \mathsf{S}_t = \mu_{(n)}$, recall that  $(\mathsf{S}_t)_{t \in \mathbb{Z}_{\geq 0}}$ is the semigroup of the shifted S6V model. Since $\delta_n > \delta_{n-1}$, using Lemma \ref{lem:mono}, we have $\mu_{(n)} > \mu_{(n-1)}$.   Moreover, \red{one can verify that}
\begin{equation*}
\lim_{n \to \infty}\mu_{(n)} 
= \delta_\infty, \qquad \lim_{n \to -\infty}\mu_{(n)} = \delta_{-\infty}.
\end{equation*}
Using the above discussion and Lemma \ref{lem:ordering}, if $\nu \in \mfI_e \cap \mathcal{P}^c$ 
and $\nu \notin \{\mu_{(n)}\}_{n \in \mathbb{Z}}$, there exists $k \in \mathbb{Z}$ such that $\mu_{(k-1)} <  \nu < \mu_{(k)}$. This implies that
\begin{align*}
&\liminf_{N \to 
	\infty}\mathbb{E}^{\red{\nu}}\Big[\sum_{i  = -N}^N \eta(i)\Big] - \mathbb{E}^{\red{\mu_{(k-1)}}}\Big[\sum_{i  = -N}^N \eta(i)\Big] \geq 1,\\
&\liminf_{\red{N} \to 
	\infty}\mathbb{E}^{\red{\mu_{(k)}}}\Big[\sum_{i  = -N}^N \eta(i)\Big] - \mathbb{E}^{\red{\nu}} \Big[\sum_{i  = -N}^N \eta(i)\Big] \geq 1.
\end{align*}
\red{Combine the two inequalities above and replace $N$ with $n$}, we obtain that 
\begin{equation}
\label{eq:diff}
\liminf_{n \to \infty} \mathbb{E}^{\mu_{\red{(k)}}}\Big[\sum_{i = -n}^n\eta(i)\Big] - \mathbb{E}^{\mu_{(k\red{-1})}}\Big[\sum_{i = -n}^n\eta(i)\Big] \geq 2. 
\end{equation}
Since $\delta_k \circ \tau^{-1} = \delta_{k-1}$ and $\lim_{t \to \infty} \delta_k \mathsf{S}_t = \mu_{(k)}$, we have 
\begin{equation}\label{eq:shifting}
\mu_{(k)} \red{\circ \tau^{-1}} = \mu_{(k-1)}.
\end{equation} 
Therefore, we have
\begin{equation*}
\limsup_{n \to \infty} \mathbb{E}^{\mu_{\red{(k)}}}\Big[\sum_{i = -n}^n\eta(i)\Big] - \mathbb{E}^{\mu_{(k\red{- 1})}}\Big[\sum_{i = -n}^n\eta(i)\Big] = \limsup_{n \to \infty} \mu_{(k)}\big(\eta(\red{-n}) = 1\big) -  \mu_{(k)}\big(\eta(\red{n+1}) = 1\big) \leq 1.
\end{equation*}  
This yields a contradiction with \eqref{eq:diff}. Hence, there exists $n \in \mathbb{Z}$ such that $\mu_{(n)} = \nu$. This implies that $\mfI_e \cap \PP^c \subseteq \{\mu_{(n)}\}_{n \in \mathbb{Z}}$. Moreover, by Lemma \ref{lem:mfIshiftinvariance}, we have $\mfI_e \cap \PP  \subseteq (\mfI \cap \PP)_e = \{\mu_\rho\}_{\rho \in [0, 1]}$. Therefore, we get 
\begin{equation*}
\mfI_e  = (\mfI_e \cap \PP) \cup (\mfI_e \cap \PP^c) \subseteq \{\mu_\rho\}_{\rho \in [0, 1]} \cup \{\mu_{(n)}\}_{n \in \ZZ}.  
\end{equation*}
On the other hand, by Lemma \ref{lem:prodstat} and Proposition \ref{prop:mu*mfI}, we know that $\{\mu_\rho\}_{\rho \in [0, 1]} \cup \{\mu_{(n)}\}_{n \in \ZZ} \subseteq \mfI$. It is straightforward to see that every element of $\{\mu_\rho\}_{\rho \in [0, 1]} \cup \{\mu_{(n)}\}_{n \in \ZZ}$ \red{cannot} be written as a linear combination of the others. Hence, we conclude that $\mfI_e = \{\mu_\rho\}_{\rho \in [0, 1]} \cup \{\mu_{(n)}\}_{n \in \ZZ}$.
\end{proof}

\section{Fusion and proof of Theorem \ref{thm:st1} and \ref{thm:blockingHS}}\label{sec:sketch}
In this section, we prove Theorem \ref{thm:st1} and \ref{thm:blockingHS}. Recall the SHS6V model from Section \ref{sec:extenHS}, we call $I/2$ the \emph{vertical spin}, $J/2$ the \emph{horizontal spin} and $\alpha$ the \emph{spectral parameter} of the SHS6V model. 
Recall that the $\alpha, q$ parameters of the SHS6V model satisfied either condition \eqref{item:condition1} or \eqref{item:condition2} in Section \ref{sec:extenHS}.

Given an integer $a$ and a positive integer $b$, we set $a_{[b]} := a \text{ mod } b$. Set $\beta(x, t) := x_{[I]} + t_{[J]} $ and define
\begin{equation}\label{eq:b1b2}
\begin{split}
b_1 (x, t) &:= \L_{\alpha q^{\beta(x, t)}}^{1, 1} (1, 0; 1, 0) =  \frac{1+\alpha q^{\beta(x, t) +1}}{1 + \alpha q^{\beta(x, t)}}, \\
b_2 (x, t) &:= \L_{\alpha q^{\beta(x, t)}}^{1, 1} (\red{0}, \red{1}; \red{0}, \red{1}) \red{\ = \ }  \frac{\alpha q^{\beta(x, t)} + q^{-1}}{1 + \alpha q^{\beta(x, t)}}.
\end{split}
\end{equation}
We see that  $b_1(x, t), b_2(x, t)$ are periodic in $x, t$ with $b_i (x+I, t) = b_i (x, t+J) =  b_i (x, t)$ for $i = 1, 2$. In addition, $b_1 (x, t), b_2 (x, t) \in (0, 1)$.
It turns out the SHS6V model in Definition \ref{def:fused} can be identified as a fused version of an inhomogeneous version of the S6V model that is called \emph{the unfused SHS6V model}. 
\begin{defin}[Unfused SHS6V model with \red{finitely many} particles]
\label{def:unfused}
The unfused SHS6V model with \red{finitely many} particles $\overline{\mathbf{p}}_t = (\overline{p}_t (-M) < \dots < \overline{p}_t (N))$ is defined in the same way as Definition \ref{def:s6v}, except that we replace the probability distribution \eqref{eq:updaterule} of independent random variables $\{\chi_t (x), j_t (x): x\in \mathbb{Z}, t \in \mathbb{Z}_{\geq 0}\}$ with
\begin{align*}
&\mathbb{P}(\chi_t(x) = 1) = b_1(x, t), \qquad \mathbb{P}(\chi_t (x) = 0) = 1 - b_1(x, t), \\ &\mathbb{P}(j_t (x) = n) = (1 - b_2(t, x+n)) \prod_{k = 1}^{n-1} b_2(t, x+k), \qquad \forall \, n \in \mathbb{Z}_{\geq 1}.
\end{align*} 
\end{defin}
We can define the unfused SHS6V model with \red{infinitely many} particles using a similar argument appeared in Section \ref{sec:infinite}. Note that at any time $t \in \mathbb{Z}_{\geq 0}$, the independent random variables $\{\chi_t (x)\}_{x \in \mathbb{Z}}$ have probability distribution given by $\chi_t (x) \sim \text{Ber}(b_1 (x, t))$ for every $x \in \mathbb{Z}$. By \eqref{eq:b1b2}, we know that $b_i (x+I, t) = b_i (x, t+J) = b_i (x, t)$ for all $i \in \{1, 2\}, t \in \mathbb{Z}_{\geq 0}$ and $x \in \mathbb{Z}$. As a consequence, we know that  $\sup_{i \in \{1, 2 \}, t \in \mathbb{Z}_{\geq 0}, x \in \mathbb{Z}} b_i (x, t) < 1$. Note that $\mathbb{P}(\chi_t (x) = 0) = 1 - b_1 (x, t)$,
by Borel-Cantelli lemma and the fact that $1 - b_1 (x, t)$ is uniformly lower bounded away from $0$, we know that with probability $1$, we can find a random sequence of integers $\dots <d_{-1} < d_0 < d_1 < \cdots$  such that $\chi_t (\overline{p}_t (d_i)) = 0$ for every $i \in \mathbb{Z}$. The rest of the argument follows as in Section \ref{sec:infinite}. 

Define the unfused SHS6V model occupation process $\overline{\eta}_t \in \SS$ such that for every $t \in \mathbb{Z}_{\geq 0}$ and $x \in \mathbb{Z}$, 
\begin{equation*}
\overline{\eta}_t (x) := \mathbbm{1}_{\{\exists i, \overline{p}_t(i) = x\}}. 
\end{equation*}
Clearly, $(\overline{\eta}_t)_{t \in \mathbb{Z}_{\geq 0}}$ is a time-inhomogeneous Markov process. We use $(\overline{S}_t)_{t \in \mathbb{Z}_{\geq 0}}$ to denote its semigroup. 

Fix arbitrary $n \in \mathbb{Z}_{\geq 1}$ and define a map $f_n: \mathbb{R}^n \to \mathbb{R}$ such that $f_n (v_1, \dots, v_n) = \sum_{k = 1}^n v_k$.
Define a matrix $\Lambda_n$ with row and column indexed by $v \in \{0, 1, \dots, n\}$ and $(v_1, \dots, v_n) \in \{0, 1\}^n$ with entries 
\begin{equation*}
\Lambda_n \big(v; (v_1, \dots, v_n)\big) = Z_{v, n}^{-1} \mathbbm{1}_{\{f_n (v_1, \dots, v_n) = v\}} q^{\sum_{r = 1}^n (r-1) v_r}.
\end{equation*}
where $Z_{v, n}$ is the normalizing constant 
\begin{equation*}
Z_{v, n} := \sum_{(v_1, \dots, v_n) \in \{0, 1\}^n} \mathbbm{1}_{\{f_n(v_1, \dots, v_n) = v\}} q^{\sum_{r = 1}^n (r-1) v_r}. 
\end{equation*} 
Note that $\Lambda_n$ is a stochastic matrix. 
By slightly abusing the notation, we can also view $\Lambda_n$ as a map from $\MM_1(\{0, 1, \dots, n\})$ to $\MM_1 (\{0, 1\}^n)$. For $\mathsf{u} \in \MM_1(\{0, 1, \dots, n\})$, we define $\Lambda_n \mathsf{u} \in \MM_1 (\{0, 1\}^n)$ such that for every $(v_1, \dots
, v_n) \in \{0, 1\}^n$,
\begin{equation*}
(\Lambda_n \mathsf{u})\big((v_1, \dots, v_n)\big) := \sum_{v = 0}^n \mathsf{u}(v) \Lambda_n \big(v; (v_1, \dots, v_n)\big).
\end{equation*}
Note that there is only one non-zero term in the above summation with $v = f_n(v_1, \dots, v_n)$. 

We also define a stochastic matrix $\Lambda'_n$ such that for all $v \in \{0, 1, \dots, n\}$ and $(v_1, \dots, v_n) \in \{0, 1\}^n$,
\begin{equation*}
\Lambda'_n \big(v; (v_n, \dots, v_1)\big) = \Lambda_n \big(v; (v_1, \dots, v_n)\big).
\end{equation*}
Similarly, we can also view $\Lambda_n'$ as a map from $\MM_1 (\{0, 1, \dots, n\})$ to $\MM_1 (\{0, 1\}^n)$ such that for $\mathsf{u}' \in 
\MM_1 (\{0, 1, \dots, n\})$, $v \in \{0, 1, \dots, n\}$ and $(v_1, \dots, v_n) \in \{0, 1\}^n$,
\begin{equation*}
\Lambda_n' \mathsf{u}'\big((v_1, \dots, v_n)\big) = \sum_{v = 0}^n \mathsf{u}'(v) \Lambda'_n \big(v; (v_1, \dots, v_n)\big). 
\end{equation*}
We proceed to define a map $\boldsymbol{\Lambda}'_n$ from $\MM_1 (\{0, 1, \dots, n\}^\ZZ)$ to $\MM_1(\{0, 1\}^\mathbb{Z})$ such that for any $\mu \in \MM_1 (\{0, 1, \dots, n\}^\ZZ)$, we have $\Lambda'_n \mu \in \MM_1 (\{0, 1\}^{\ZZ})$ satisfying for any integers $a < b$ and $v_i \in \{0, 1\}$, $i = an, an+1, \dots, bn - 1$, we have
\begin{align*}
&\boldsymbol{\Lambda}_n' \mu\big(\{\eta \in \{0, 1\}^\ZZ: \eta(i) = v_i \text{ for all } i \in [an, bn-1]\}\big)\\ 
&= \sum_{\substack{u_k \in \{0, 1, \dots, n\}\\ k \in [a, b-1]}}\mu(\{g \in \{0, 1, \dots, n\}^\ZZ: g(k) = u_k \text{ for all } k \in [a, b-1]\}) \prod_{k = a}^{b-1} \Lambda'_n \big(u_k; (v_{kn}, v_{kn+1}, \dots, v_{(k+1)n-1})\big).
\end{align*} 
We also define a map $\mathbf{f}_{n} : \{0, 1\}^{\ZZ} \to \{0, 1, \dots, n\}^{\ZZ}$ such that for all $\eta \in \{0, 1\}^\ZZ$ we have $g = \mathbf{f}_n (\eta)$ with $g(k) = \sum_{i = kn}^{(k+1)n - 1} \eta(i)$ for all $k \in \ZZ$. In words, the map $\mathbf{f}_n$ collapses every $n$ neighboring lattice points, collects the particles at these $n$ lattice points and put them at a single lattice point.
 
The following proposition shows that the unfused SHS6V model has the same probablity distribution as the SHS6V model, after we collapse every $I$ steps of space and $J$ steps of time into one. 
\begin{prop}[fusion]
	\label{prop:fusion}
	
	We consider the SHS6V model $(g_t)_{t \in \mathbb{Z}_{\geq 0}}$ defined in Definition \ref{def:fused} and the unfused SHS6V model $(\overline{\eta}_t)_{t \in \mathbb{Z}_{\geq 0}}$ defined in Definition \ref{def:unfused}. Assume that $g_0$ has \red{finitely many} particles and has probability distribution $\mu$. If we have $\overline{\eta}_0 \sim \bLambda'_I \mu$, then  
	\begin{equation*}
	(g_t, t \geq 0) \overset{d}{=} (\mathbf{f}_I (\overline{\eta}_{Jt}), t \geq 0).
	\end{equation*} 
\end{prop}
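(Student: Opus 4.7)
The plan is to follow the classical Kirillov--Reshetikhin fusion procedure, which identifies the fused stochastic weight $\L^{I,J}_\alpha$ with an appropriate symmetrized sum over an $I \times J$ block of unfused weights $\L^{1,1}_{\alpha q^{\beta}}$, where $\beta = \mod{x}{I} + \mod{t}{J}$ ranges over $\{0,\dots,I-1\} \times \{0,\dots,J-1\}$. The linchpin is a single vertex-level identity: for any $(i_1, j_1) \in \{0,\dots,I\} \times \{0,\dots,J\}$, if a block of $I \times J$ unfused vertices (arranged with the specified spectral parameters) receives a random vertical input distributed as $\Lambda'_I(i_1;\cdot)$ from below and a random horizontal input distributed as $\Lambda'_J(j_1;\cdot)$ from the left, then the column-summed vertical output $i_2$ and row-summed horizontal output $j_2$ have joint law $\L^{I,J}_\alpha(i_1, j_1; i_2, j_2)$, and conditional on $(i_2, j_2)$ the individual top and right outputs factor as $\Lambda'_I(i_2;\cdot) \otimes \Lambda'_J(j_2;\cdot)$. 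I will invoke this identity from \cite[Theorem 3.15]{corwin2016stochastic} and \cite[Section 4.4]{BP18} rather than rederive the underlying $q$-hypergeometric computation.

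Given the vertex-level fusion identity, I will proceed by induction on $t$, with hypothesis that $\overline{\eta}_{Jt} \sim \bLambda'_I \mu_t$, where $\mu_t$ is the law of $g_t$; the base case $t=0$ is the hypothesis of the proposition. To advance one step, I will examine the unfused evolution on the time interval $[Jt, J(t+1))$. Because the unfused weights are conservative and the sequential update of Definition \ref{def:unfused} proceeds strictly left-to-right, the unfused dynamics on a vertical strip $[kI,(k+1)I) \times [Jt,J(t+1)]$ is completely determined by its bottom input (distributed as $\Lambda'_I(g_t(k);\cdot)$ by the inductive hypothesis) together with the horizontal line entering on its left edge at $x = kI$, which I will show inductively in $k$ is of $\Lambda'_J$-type. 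Applying the vertex-level fusion identity to this strip converts $(g_t(k), j_1^{(k)})$ into $(g_{t+1}(k), j_1^{(k+1)})$ with the fused law $\L^{I,J}_\alpha$ and outputs of the correct $\Lambda'$-type. Iterating $k$ from left to right mirrors exactly the sequential update of the fused SHS6V model in Definition \ref{def:fused}, with the leftmost occupied block receiving the trivial horizontal input $\Lambda'_J(0;\cdot) = \delta_{\mathbf{0}}$. This delivers both $\mathbf{f}_I(\overline{\eta}_{J(t+1)}) \overset{d}{=} g_{t+1}$ and $\overline{\eta}_{J(t+1)} \sim \bLambda'_I \mu_{t+1}$, closing the induction.

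The equality in the sense of processes then follows from the Markov property of both $(g_t)_{t \geq 0}$ and $(\overline{\eta}_t)_{t \geq 0}$: the induction establishes that the one-step transition of $\mathbf{f}_I(\overline{\eta}_{Jt})$ matches the one-step transition of $g_t$, while the preservation of the unfused form $\bLambda'_I \mu_t$ at every time guarantees the coupling persists along the whole trajectory.

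The hard part will be the vertex-level fusion identity itself: it is a delicate combinatorial identity in which one assembles $IJ$ stochastic unfused transitions, weighted by the symmetrizers $\Lambda'_I$ and $\Lambda'_J$, into the compact ${}_4\overline{\phi}_3$ formula for $\L^{I,J}_\alpha$. I propose to black-box it via the existing literature. A secondary concern is the passage to infinitely many particles, but since the proposition is stated under the finite-particle assumption, the argument is carried out on finite blocks and no limiting issue arises; the infinite-particle extension, if needed later, is handled by the random interval decomposition of Section \ref{sec:infinite}, which is available uniformly in $(x,t)$ because $b_1(x,t)$ is bounded away from $1$.
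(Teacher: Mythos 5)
Your overall strategy is the same as the paper's: unfuse each fused vertex into an $I\times J$ block of spin-$\tfrac12$ vertices with geometrically progressing spectral parameters, propagate the $\Lambda$-type (exchangeable) structure of the inputs strip by strip from left to right, and close an induction in $t$ using the Markov property. The probabilistic bookkeeping in your second paragraph is sound and in fact spells out more explicitly what the paper compresses into the statement that the two tilings have the same total weight together with the preservation of $q$- and $q^{-1}$-exchangeability.

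The one genuine gap is in your plan to black-box the vertex-level block identity. The paper explicitly remarks that the full two-directional identity (its Proposition \ref{prop:unfused}, equation \eqref{eq:key}) is \emph{not} available in the literature: what \cite[Section 3]{corwin2016stochastic} and \cite[Proposition 5.4]{BP18} provide is only the one-directional column fusion, i.e.\ unfusing a vertex of horizontal spin $J/2$ into a column of $J$ vertices of horizontal spin $1/2$ while keeping vertical spin $I/2$ (Lemma \ref{lem:qexchange}). To unfuse further in the vertical direction one needs the diagonal reflection invariance $\L^{I,J}(i_1,j_1;i_2,j_2\,|\,\alpha,q)=\L^{J,I}(j_1,i_1;j_2,i_2\,|\,\alpha^{-1},q^{-1})$ of \eqref{eq:reflection}, which converts the column statement into the row statement (Corollary \ref{cor:qexchangable}); only then can the two be composed into the $I\times J$ identity by propagating row by row through the block, as in the proof of Proposition \ref{prop:unfused}. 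So the citation you propose does not cover the linchpin of your argument, and you would need to supply this two-step derivation. A smaller point: you assign the horizontal inputs the law $\Lambda'_J(j_1;\cdot)$, whereas with the spectral parameters increasing upward the horizontal lines must be $q$-exchangeable, i.e.\ distributed as $\Lambda_J(j_1;\cdot)$, while it is the vertical lines (read left to right) that carry the $q^{-1}$-exchangeable law $\Lambda'_I$; with $\Lambda'_J$ in place of $\Lambda_J$ the identity fails.
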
   

According to the paragraph after Definition \ref{def:unfused}, the unfused SHS6V model with infinitely many particles is well-defined. By Proposition \ref{prop:fusion}, the definition of the SHS6V model in Definition \ref{def:fused} can also be extended to the situation when there are infinitely many particles.

Although the result of Proposition \ref{prop:fusion} might be well-known by experts, we do not find a proof of it in the literature. The proof for the situation $I = 1$ can be obtained by the argument in \cite[Section 3]{corwin2016stochastic}. In fact, the argument in \cite[Section 3]{corwin2016stochastic} is related to unfusing a vertex with vertical spin $I/2$ and horizontal spin $J/2$ to a column of $J$ vertices with vertical spin $I/2$ and horizontal spin $1/2$, see Figure \ref{fig:column}. Proposition \ref{prop:fusion} indicates that one can further  unfuse each vertex in Figure \ref{fig:column} into a row of $I$ vertices with vertical and horizontal spin $1/2$, see Figure \ref{fig:fusion}. This is consequence of reflection invariance of the $\L$-matrix in \eqref{eq:reflection}.  

We proceed to prove Proposition \ref{prop:fusion}. We start with some preparation. We call a probability measure $P$ on $\{0, 1\}^n$ \emph{$q$-exchangable} if there exists a probability measure $\mathsf{u}$ on $\{0, 1, \dots, n\}$ such that $P = \Lambda_n \mathsf{u}$. 
The name $q$-exchangable comes from the fact that switching a neighboring pair of $01$ to $10$ would multiple the probability of $(x_1, \dots, x_n) \in \{0, 1\}^n$ by $q$. Similarly, we call a probability measure $P'$ on $\{0, 1\}^n$ \emph{$q^{-1}$-exchangable} if there exists a probability measure $\mathsf{u}'$ on $\{0, 1, \dots, n\}$ such that $P' = \Lambda'_n \mathsf{u}'$.  

Attach vertically $J$ vertices, assign the $i$-th vertex counting from bottom the stochastic matrix $\L_{\alpha q^{i-1}}^{I, 1}$. Fix $v$ number of lines coming from the bottom and $v'$ number of lines going out to top, the column of vertex configurations maps the probability distribution $P_1$ on the incoming lines $(h_1, \dots, h_J)$ to a probability distribution $P_2$ on the outgoing lines $(h_1', \dots, h_J')$, see Figure \ref{fig:column}. 
\begin{figure}[ht]
\centering
\begin{tikzpicture}[scale = 1.5]
\begin{scope}[xshift = 0cm]
\draw[thick] (0, -0.2) -- (2, -0.2);
\draw[thick] (1, -1) -- (1, 1);
\draw[thick] (0, 0.5) -- (2, 0.5); 
\draw[thick] (0, -.5) -- (2, -.5);
\node at (1, -1.3) {$v$};
\node at (1, 1.3) {$v'$};
\node at (0.5, 0.2) {$\vdots$};
\node at (-0.2, -.5) {$h_1$};
\node at (-0.2, -.2) {$h_2$};
\node at (-0.2, .5) {$h_J$};
\node at (1.5, 0.2) {$\vdots$};
\node at (-0.2, .5) {$h_J$};
\node at (2.2, -.5) {$h'_1$};
\node at (2.2, -0.2) {$h'_2$};
\node at (2.2, .5) {$h'_J$};
\node at (1.11, -0.7) {\small{$\alpha$}};
\node at (1.15, -0.35) {\small{$\alpha q$}};
\node at (1.32, 0.35) {\small{$\alpha q^{J-1}$}};
\end{scope}
\end{tikzpicture}
\caption{Attach $J$ vertices and assgin the $i$-th vertex counting from the bottom with stochastic matrix $\L_{\alpha q^{i-1}}^{I, 1}$. The column of vertex configurations maps the probability distribution $P_1$ on $(h_1, \dots, h_J)$ to a probability distribution $P_2$ on $(h'_1, \dots, h'_J)$.}
\label{fig:column}
\end{figure}
\begin{lemma}
\label{lem:qexchange}
Fix arbitrary $I, J \in \mathbb{Z}_{\geq 1}$. If the probability distribution $P_1 \in \MM_1 (\{0, 1\}^n)$ is $q$-exchangable, so is $P_2$. Moreover, we have 
\begin{equation*}
\sum_{\substack{h_i, h_i' \in \{0, 1\}, i = 1,\dots, J \\ v_i \in \{0, 1\}, i = 1, \dots, J-1}} \Lambda_J \big(h; (h_1, \dots, h_J)\big) \prod_{i = 1}^J \L^{I, 1}_{\alpha q^{i-1}} (v_{i-1}, h_i; v_i, h_i') \mathbbm{1}_{\{h' = f_J(h_1', \dots, h_J')\}} = \L^{I, J}_\alpha (v, h; v', h'), 
\end{equation*}
where we set $v_0 = v$ and $v_J = v'$.
\end{lemma}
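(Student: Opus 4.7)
The plan is to prove a strengthened, unsummed version of the moreover identity, from which both conclusions of the lemma will follow directly. Specifically, I would establish that for all admissible $(v, h, v', h')$ and all $(h'_1, \dots, h'_J) \in \{0,1\}^J$ with $f_J(h'_1, \dots, h'_J) = h'$,
\begin{equation*}
\sum_{\substack{h_i \in \{0, 1\} \\ v_i \in \{0, \dots, I\}}} \Lambda_J(h; (h_1, \dots, h_J)) \prod_{i=1}^J \L^{I,1}_{\alpha q^{i-1}}(v_{i-1}, h_i; v_i, h'_i) = \L^{I,J}_\alpha(v, h; v', h') \cdot \Lambda_J(h'; (h'_1, \dots, h'_J)),
\end{equation*}
with the convention $v_0 = v$ and $v_J = v'$. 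Summing this over $(h'_1, \dots, h'_J)$ subject to $f_J(h'_1, \dots, h'_J) = h'$ recovers the moreover identity, because $\Lambda_J(h'; \cdot)$ is a probability distribution in its second argument. Moreover, the factorization with $\Lambda_J(h'; \cdot)$ on the right shows that if $P_1 = \Lambda_J \mathsf{u}$ for some $\mathsf{u} \in \MM_1(\{0, \dots, J\})$, then $P_2 = \Lambda_J \mathsf{u}'$ with $\mathsf{u}'(h') \propto \sum_{v', h} \mathsf{u}(h) \L^{I,J}_\alpha(v, h; v', h')$, yielding the $q$-exchangeability of $P_2$.

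I would prove the strengthened identity by induction on $J$. The base case $J = 1$ is immediate, since $\Lambda_1(k; k_1) = \mathbbm{1}_{\{k_1 = k\}}$ reduces both sides to $\L^{I,1}_\alpha(v, h; v', h')$. For the inductive step, I exploit the recursion
\begin{equation*}
\Lambda_J(h; (h_1, \dots, h_J)) = \frac{Z_{h - h_J, J-1}}{Z_{h, J}} \, q^{(J-1) h_J} \, \Lambda_{J-1}(h - h_J; (h_1, \dots, h_{J-1})),
\end{equation*}
which follows immediately from the definition of $\Lambda_J$ by separating the factor $q^{(J-1)h_J}$ and the indicator $\mathbbm{1}_{\{h_1 + \dots + h_{J-1} = h - h_J\}}$. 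Splitting off the topmost $\L^{I,1}_{\alpha q^{J-1}}$ factor and applying the inductive hypothesis to the sum over the bottom $J-1$ vertices reduces the inductive step to a single-step fusion identity expressing the $q$-weighted composition of $\L^{I, J-1}_\alpha$ and $\L^{I,1}_{\alpha q^{J-1}}$ (with $q$-Boltzmann weight $\frac{Z_{h-h_J, J-1}}{Z_{h,J}} q^{(J-1)h_J}$ summed against the intermediate states $(h_J, v_{J-1})$) as $\L^{I, J}_\alpha$ times $\frac{Z_{h'-h'_J, J-1}}{Z_{h',J}} q^{(J-1)h'_J}$.

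The main obstacle will be verifying this single-step fusion identity, which is the essential content of the fusion procedure going back to \cite{kirillov1987exact}. In our setting it can be checked either by a direct computation with the explicit ${}_4 \bar{\phi}_3$ formulas for $\L^{I,J}_\alpha$, reducing to a $q$-hypergeometric summation, or by a Yang-Baxter-type argument on the column of $J$ vertices along the lines of \cite[Section 3]{corwin2016stochastic}. Once this single-step fusion is established, induction completes the proof of the strengthened identity and hence of the lemma.
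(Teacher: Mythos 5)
Your proposal does not mirror the paper's proof, which is a pure citation: the preservation of $q$-exchangeability is attributed to Proposition 5.4 of \cite{BP18} and the summed identity to Section 3 of \cite{corwin2016stochastic}. What you do instead is expose the structure behind those references. You upgrade the lemma to the factorized, unsummed statement (output law $=\L^{I,J}_\alpha(v,h;v',h')\cdot\Lambda_J(h';\cdot)$), note that this single identity yields both conclusions at once, and reduce it by induction on $J$ --- via the correct recursion $\Lambda_J(h;\cdot)=\frac{Z_{h-h_J,J-1}}{Z_{h,J}}q^{(J-1)h_J}\Lambda_{J-1}(h-h_J;\cdot)$ --- to a one-step fusion identity composing $\L^{I,J-1}_\alpha$ with $\L^{I,1}_{\alpha q^{J-1}}$. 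That skeleton is sound, and you correctly take the intermediate vertical occupations $v_i$ in $\{0,\dots,I\}$ (the lemma as printed says $\{0,1\}$, evidently a typo). Your route buys a transparent explanation of \emph{why} $q$-exchangeability propagates; the paper buys brevity by outsourcing everything.

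The caveat is that the one-step fusion identity you isolate is not a technical footnote: it is the entire analytic content of the lemma, and you do not prove it. ``Direct computation with the ${}_4\bar\phi_3$ formulas'' is a genuinely nontrivial $q$-hypergeometric summation, and the alternative you offer --- a Yang--Baxter argument ``along the lines of'' Section 3 of \cite{corwin2016stochastic} --- is exactly the citation the paper already makes for the whole statement. So as written, your proposal is a correct reduction plus the same appeal to the literature; to be self-contained it would still need the one-step computation carried out. One further small slip: in your formula for $\mathsf{u}'$ you sum over $v'$, whereas in the setup of Figure \ref{fig:column} both $v$ and $v'$ are fixed, so the outgoing arrangement has law $\Lambda_J\mathsf{u}'$ with $\mathsf{u}'(h')\propto\sum_h\mathsf{u}(h)\L^{I,J}_\alpha(v,h;v',h')$ at fixed $v'$; this does not affect the conclusion.
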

\begin{proof}
The claim that $P_2$ is $q$-exchangable is proved in \cite[Proposition 5.4]{BP18}. The equality above is proved in \cite[Section 3]{corwin2016stochastic}.
\end{proof}
By Proposition 5.1 of \cite{corwin2016stochastic}, we know that 
\begin{equation}\label{eq:reflection}
\L^{I, J}(i_1, j_1; i_2, j_2\, |\, \alpha, q) = \L^{J, I}(j_1, i_1; j_2, i_2 \,|\, \alpha^{-1}, q^{-1}).
\end{equation}
Note that we write $\L^{I, J}_{\alpha}(\cdot) = \L^{I, J} (\cdot \,|\, \alpha, q)$ to emphasize its dependence on $q$. The action $(i_1, j_1; i_2, j_2) \to (j_1, i_1; j_2, i_2)$ corresponds to a reflection in the diagonal of a vertex configuration. Using this together with Lemma \ref{lem:qexchange}, one obtains the following corollary. 
\begin{cor}\label{cor:qexchangable}
Fix arbitrary $I, J \in \mathbb{Z}_{\geq 1}$. Attach horizontally $I$ vertices, assign the $i$-th vertex counting from left the stochastic matrix $\L^{1, J}_{\alpha q^{i-1}}$. Fix $h$ number of lines coming from the left and $h'$ number of lines going to the right. The row of vertex configurations maps the probability distribution $P_1$ on the incoming lines $(v_1, \dots, v_I)$ to a probability distribution $P_2$ on the outgoing lines $(v_1', \dots, v_I')$, see Figure \ref{fig:row}. If $P_1 \in \MM_1(\{0, 1\}^n)$ is $q^{-1}$-exchangable, then $P_2 \in \MM_1(\{0, 1\}^n)$ is also $q^{-1}$-exchangable. Moreover, we have 
\begin{equation*}
\sum_{\substack{v_i, v_i' \in \{0, 1\}, i = 1,\dots, I \\ h_i \in \{0, 1\}, i = 1, \dots, I-1}} \Lambda'_I \big(v; (v_1, \dots, v_I)\big) \prod_{i = 1}^I \L^{1, J}_{\alpha q^{i-1}} (v_{i}, h_{i-1}; v'_i, h_i) \mathbbm{1}_{\{v' = f_I(v_1', \dots, v_I')\}} = \L^{I, J}_\alpha (v, h; v', h'), 
\end{equation*}
where we set $h_0 = h$ and $h_I = h'$.  
\end{cor}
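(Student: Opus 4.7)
The plan is to deduce Corollary \ref{cor:qexchangable} directly from Lemma \ref{lem:qexchange} by exploiting the diagonal reflection symmetry \eqref{eq:reflection}. Reflecting the horizontal row of $I$ vertices across the SW--NE diagonal swaps bottom edges with left edges and top edges with right edges, so the resulting picture is a vertical column of $I$ vertices in which the bottom-to-top edges carry $h = h_0, h_1, \ldots, h_I = h'$ and the left and right edges carry $(v_1, \ldots, v_I)$ and $(v_1', \ldots, v_I')$ respectively. The $i$-th vertex from the left in the original row becomes the $i$-th vertex from the bottom in the column, with weight $\L^{1,J}_{\alpha q^{i-1}}(v_i, h_{i-1}; v_i', h_i)$; by \eqref{eq:reflection} this equals $\L^{J,1}\bigl(h_{i-1}, v_i; h_i, v_i' \,\big|\, \alpha^{-1} q^{-(i-1)}, q^{-1}\bigr)$. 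Thus after reflection we obtain exactly the column considered in Lemma \ref{lem:qexchange}, but with the parameter substitution $(I, J, \alpha, q) \rightsquigarrow (J, I, \tilde{\alpha}, \tilde{q})$ where $\tilde{\alpha} = \alpha^{-1}$ and $\tilde{q} = q^{-1}$, so that the $i$-th vertex from the bottom of the column has spectral parameter $\tilde{\alpha} \tilde{q}^{i-1}$.

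Next I would argue that, under this substitution, the notion of $q$-exchangeability appearing in Lemma \ref{lem:qexchange} translates into $q^{-1}$-exchangeability in the sense of Corollary \ref{cor:qexchangable}. Concretely, one checks that the matrix $\Lambda_n$ with base parameter $q$ replaced by $q^{-1}$ agrees with $\Lambda'_n$ (with base parameter $q$) up to a multiplicative factor of the form $q^{c(n) v}$ on the non-normalized weight, where $v = f_n(v_1, \ldots, v_n)$; since this factor depends only on $v$, it is absorbed into the normalizing constant, so the two matrices define the same map on probability measures. With this identification, Lemma \ref{lem:qexchange} applied in the reflected picture at base parameter $\tilde{q}$ says exactly that the output distribution on $(v_1', \ldots, v_I')$ is $q^{-1}$-exchangeable whenever the input on $(v_1, \ldots, v_I)$ is.

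For the fusion identity, I would apply the explicit formula of Lemma \ref{lem:qexchange} after the substitution to obtain
\begin{equation*}
\sum \Lambda_I'\bigl(v; (v_1, \dots, v_I)\bigr) \prod_{i=1}^{I} \L^{J,1}\bigl(h_{i-1}, v_i; h_i, v_i' \,\big|\, \alpha^{-1} q^{-(i-1)}, q^{-1}\bigr) \mathbbm{1}_{\{v' = f_I(v_1', \dots, v_I')\}} = \L^{J,I}\bigl(h, v; h', v' \,\big|\, \alpha^{-1}, q^{-1}\bigr),
\end{equation*}
where the sum runs over all relevant $v_i, v_i', h_i$. Applying \eqref{eq:reflection} once to each $\L^{J,1}$-weight on the left-hand side and once to the $\L^{J,I}$-weight on the right-hand side converts them back into $\L^{1,J}_{\alpha q^{i-1}}(v_i, h_{i-1}; v_i', h_i)$ and $\L^{I,J}_\alpha(v, h; v', h')$ respectively, yielding exactly the identity claimed in the corollary.

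The only genuine point of care in this program is the bookkeeping in the second paragraph: matching $\Lambda_I$ at base parameter $q^{-1}$ with $\Lambda'_I$ at base parameter $q$, as well as tracking how the spectral parameters $\alpha q^{i-1}$ and the base parameter $q$ transform under reflection. Once these identifications are verified, the rest is a purely mechanical transport of Lemma \ref{lem:qexchange} through \eqref{eq:reflection}, so no new combinatorial or analytic input is required.
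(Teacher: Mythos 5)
Your proposal is correct and follows essentially the same route as the paper, which likewise derives the corollary by combining the diagonal-reflection identity \eqref{eq:reflection} with Lemma \ref{lem:qexchange}; in fact you supply more detail than the paper does, in particular the verification that $\Lambda_n$ at base parameter $q^{-1}$ coincides with $\Lambda'_n$ at base parameter $q$ after reindexing $r \mapsto n+1-r$ (the exponent becomes $(n-1)v - \sum_s (s-1)v_s$, and the $v$-dependent prefactor is absorbed into $Z_{v,n}$). No gaps.
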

\begin{figure}[ht]
	\centering
	\begin{tikzpicture}[rotate = -90, scale = 1.5]
	\begin{scope}[xshift = 0cm]
	\draw[thick] (0, -0.2) -- (2, -0.2);
	\draw[thick] (1, -1) -- (1, 1);
	\draw[thick] (0, 0.5) -- (2, 0.5); 
	\draw[thick] (0, -.5) -- (2, -.5);
	\node at (1, -1.3) {$h$};
	\node at (1, 1.3) {$h'$};
	\node at (0.5, 0.2) {$\dots$};
	\node at (-0.2, -.5) {$v'_1$};
	\node at (-0.2, -.2) {$v'_2$};
	\node at (-0.2, .5) {$v'_I$};
	\node at (1.5, 0.2) {$\dots$};
	\node at (2.2, -.5) {$v_1$};
	\node at (2.2, -0.2) {$v_2$};
	\node at (2.2, .5) {$v_I$};
	\node at (1.2, -0.7) {\small{$\alpha$}};
	\node at (1.22, -0.35) {\small{$\alpha q$}};
	\node at (1.18, 0.35) {\small{$\alpha q^{I-1}$}};
	\end{scope}
	\end{tikzpicture}
	\caption{Attach $I$ vertices and assign the $i$-th vertex counting from the bottom with stochastic matrix $\L_{\alpha q^{i-1}}^{I, 1}$. The row of vertex configurations maps the probability distribution $P_1$ on $(v_1, \dots, v_I)$ to a probability distribution $P_2$ on $(v'_1, \dots, v'_I)$.}
	\label{fig:row}
\end{figure}
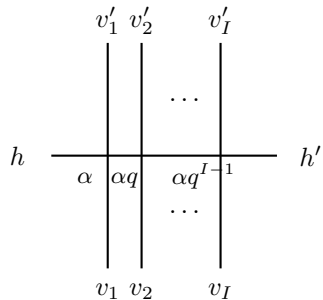
The following proposition says that the weight of a vertex configuration indexed by $(v, h, v', h')$ with vertical spin $I/2$ and horizontal spin $J/2$ and is equal to the sum of the product of the weights of vertex configurations with vertical spin $1/2$ and horizontal spin $1/2$ in a $I \times J$ grid. Note that the spectral parameter of $i$-th and $j$-th the vertex (counting respectively from bottom to top and left to right) is given by $\alpha q^{i+j - 2}$ for all $(i, j) \in \{1, \dots, I\} \times \{1, \dots, J\}$. The probability distributions of the number of lines on the bottom and left of the grid are specified by the matrix $\Lambda_I'$ and $\Lambda_J$, see Figure \ref{fig:fusion}. 
\begin{prop}\label{prop:unfused}
We have for all $(v, h), (v', h') \in \{0, 1 \dots, I\} \times \{0, 1, \dots, J\}$,
\begin{align}
\notag
&\mathsf{L}_{\alpha}^{I, J}(v, h; v', h')\\ 
\notag
&= 
\sum_{\substack{v_{i, j} \in \{0, 1\}\\ i = 1, \dots, I \\ j = 0, \dots, J}} \sum_{\substack{h_{i, j} \in \{0, 1\}\\ i = 0, \dots, I \\ j = 1, \dots, J}}   
\Lambda'_I \big(v; (v_{1, 0}  \dots, v_{I, 0})\big) \Lambda_J \big(h; (h_{0, 1} \dots, h_{0, J})\big) \mathbbm{1}_{\{v' = f_I (v_{1, J}, \dots, v_{I, J})\}}  
\\
\label{eq:key}
&\hspace{9em} \times  \mathbbm{1}_{\{h' = f_J (h_{I, 1}, \dots, h_{I, J})\}}  
 \prod_{i = 1}^{I} \prod_{j = 1}^{J} \L_{\alpha q^{i+j - 2}}^{1, 1} (v_{i, j-1}, h_{i-1, j}; v_{i, j}, h_{i, j}). 
\end{align}
\end{prop}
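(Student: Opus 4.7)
The plan is to prove \eqref{eq:key} by starting from its right-hand side and iteratively collapsing each of the $J$ rows of the $I \times J$ grid of $\L^{1,1}$-vertices into a single $\L^{I,1}$-vertex, and then applying Lemma \ref{lem:qexchange} to collapse the resulting column of $J$ vertices into the $\L^{I,J}$-vertex on the left-hand side.

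The central tool is a strengthened form of Corollary \ref{cor:qexchangable}: for fixed $v, h_0, h_I$ and any target $v_{\cdot, 1} = (v_{1,1}, \dots, v_{I,1}) \in \{0,1\}^I$,
\begin{equation*}
\sum_{v_{\cdot, 0}, h_1, \dots, h_{I-1}} \Lambda'_I\bigl(v; (v_{1, 0}, \dots, v_{I,0})\bigr) \prod_{i=1}^I \L^{1,1}_{\alpha q^{i-1}}(v_{i, 0}, h_{i-1}; v_{i, 1}, h_i) = \Lambda'_I\bigl(f_I(v_{\cdot, 1}); v_{\cdot, 1}\bigr)\, \L^{I, 1}_\alpha\bigl(v, h_0; f_I(v_{\cdot, 1}), h_I\bigr).
\end{equation*}
This follows from combining the two parts of Corollary \ref{cor:qexchangable}: the $q^{-1}$-exchangability of $P_2$ asserted there implies that the left-hand side, viewed as a function of $v_{\cdot, 1}$, factors as $\Lambda'_I(f_I(v_{\cdot, 1}); v_{\cdot, 1})$ times a function of only the total $f_I(v_{\cdot, 1})$, and the algebraic identity in the same Corollary identifies this function as $\L^{I, 1}_\alpha(v, h_0; f_I(v_{\cdot, 1}), h_I)$.

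Starting from the right-hand side of \eqref{eq:key}, I iterate this strong form for $j = 1, 2, \dots, J$. At step $j$, the plan is to sum over $(v_{1, j-1}, \dots, v_{I, j-1})$ and the intermediate horizontal edges $h_{1, j}, \dots, h_{I-1, j}$. The $\Lambda'_I$-factor whose total is $v_{j-1} := f_I(v_{\cdot, j-1})$---either the original $\Lambda'_I(v; (v_{1, 0}, \dots, v_{I, 0}))$ from the right-hand side when $j=1$, or the one produced by the previous iteration---combines with the $\L^{1,1}$-factors of the $j$-th row exactly as in the display above. The previously produced $\L^{I,1}$-factor depends on $v_{\cdot, j-1}$ only through the total $v_{j-1}$, so one splits the sum over $v_{\cdot, j-1}$ according to this total, pulling the $\L^{I,1}$-factor outside the inner sum before applying the strong form. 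The outcome of step $j$ is a new $\Lambda'_I\bigl(f_I(v_{\cdot, j}); v_{\cdot, j}\bigr)$ attached to column $j$, together with the factor $\L^{I, 1}_{\alpha q^{j-1}}(v_{j-1}, h_{0, j}; f_I(v_{\cdot, j}), h_{I, j})$, setting up the next iteration. After the final step $j=J$, the lingering $\Lambda'_I(v_J; v_{\cdot, J})$ combined with the indicator $\mathbbm{1}_{\{v' = f_I(v_{\cdot, J})\}}$ pins $v_J = v'$ and sums to $1$ over $v_{\cdot, J}$; what remains is exactly the right-hand side of Lemma \ref{lem:qexchange}, which evaluates to $\L^{I, J}_\alpha(v, h; v', h')$.

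The main obstacle is the index bookkeeping across the $J$ iterations, in particular verifying at each step that the stored $\Lambda'_I$-factor can be treated as a fresh $q^{-1}$-exchangeable input distribution for the next application of the strong form. This hinges on the critical observation that the $\L^{I,1}$-factor produced in the previous step depends on the summed-out column only through its total, enabling the clean telescoping of the many $\Lambda'_I$ factors that a naive composition of Lemma \ref{lem:qexchange} and Corollary \ref{cor:qexchangable} would produce. Once the bookkeeping is set up, the iterative collapse is an algorithmic sequence of direct applications of the strong form and a concluding invocation of Lemma \ref{lem:qexchange}.
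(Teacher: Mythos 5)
Your proposal is correct and follows essentially the same route as the paper: the ``strengthened form'' you derive is precisely the pointwise version of the functional identity the paper extracts from Corollary \ref{cor:qexchangable} with $J=1$ (the $q^{-1}$-exchangeability of the output gives the $\Lambda'_I$ factorization, and the algebraic identity pins down the total-dependent factor as $\L^{I,1}_\alpha$), and the subsequent bottom-to-top row-by-row collapse followed by an application of Lemma \ref{lem:qexchange} is exactly the paper's argument. The only slip is cosmetic: at the final step what remains is the summation side (left-hand side) of the displayed equation in Lemma \ref{lem:qexchange}, which then evaluates to $\L^{I,J}_\alpha(v,h;v',h')$.
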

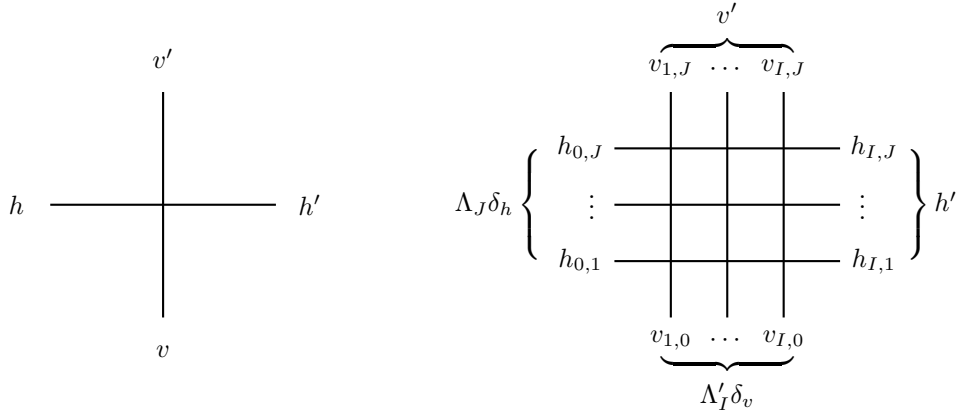
\begin{figure}[ht]
\centering
\begin{tikzpicture}[scale = 1.5]
\begin{scope}[xshift = 0cm]
\draw[thick] (0, 0) -- (2, 0);
\draw[thick] (1, -1) -- (1, 1);
\node at (-0.3, 0) {$h$};
\node at (1, -1.3) {$v$};
\node at (1, 1.3) {$v'$};
\node at (2.3, 0) {$h'$};
\end{scope}
\begin{scope}[xshift = 5cm]
\draw[thick] (0, 0) -- (2, 0);
\draw[thick] (0.5, -1) -- (0.5, 1);
\draw[thick] (1.5, -1) -- (1.5, 1);
\draw[thick] (1, -1) -- (1, 1);
\draw[thick] (0, 0.5) -- (2, 0.5); 
\draw[thick] (0, -.5) -- (2, -.5);
\node at (0.5, -1.2) {$v_{1, 0}$};
\node at (1, -1.2) {$\cdots$};
\node at (1.5, -1.2) {$v_{I, 0}$};
\node at (1, -1.4) {$\underbrace{\hspace{5em}}$};
\node at (1, -1.7) {$\Lambda'_I \delta_v$};
\node at (0.5, 1.2) {$v_{1, J}$};
\node at (1, 1.2) {$\cdots$};
\node at (1.5, 1.2) {$v_{I, J}$};
\node at (1, 1.4) {$\overbrace{\hspace{5em}}$};
\node at (1, 1.7) {$v'$};
\node at (-0.9, 0) {$\Lambda_J \delta_h
\begin{cases} \\ \\ \\ \end{cases}$};
\node at (-0.2, 0.05) {$\vdots$};
\node at (-0.3, -.5) {$h_{0, 1}$};
\node at (-0.3, .5) {$h_{0, J}$};
\node at (2.8, 0) {$\begin{rcases} \\ \\ \\ \end{rcases} h'$};
\node at (2.2, 0.05) {$\vdots$};
\node at (2.3, -.5) {$h_{I, 1}$};
\node at (2.3, .5) {$h_{I, J}$};
\end{scope}
\end{tikzpicture}
\caption{The weight of the vertex configuration $\L_{\alpha}^{I, J}(v, h; v', h')$ on the left is equal to the sum of the product of the weights of $I \times J$ vertex configurations with horizontal spin $1/2$, vertical spin $1/2$ and  geometrically progressing spectral parameters on the right hand side. The probability distributions of the number of the lines on the bottom and left of the grid are specified by the probability distribution $\Lambda_I' \delta_v, \Lambda_J \delta_h$. We also require that $\sum_{i = 1}^I v_{i, J} = v'$ and $\sum_{i = 1}^J h_{I, i} = h'$.}  
\label{fig:fusion}
\end{figure}
\begin{proof}
Applying Corollary \ref{cor:qexchangable} with $J = 1$, we know that for any function $u: \{0, 1\}^I \to \mathbb{R}$,
\begin{align*}
&\sum_{\substack{v_{i, 0}, v_{i, 1} \in \{0, 1\}, i = 1, \dots, I \\ h_{i, 1} \in \{0, 1\}, i = 1,\dots, I-1}} \Lambda'_I \big(v_0, (v_{1, 0}, \dots, v_{I, 0})\big) \prod_{i = 1}^I \L^{1, 1}_{\alpha q^{i-1}} (v_{i, 0}, h_{i-1, 1},; v_{i, 1}, h_{i, 1}) u(v_{1, 1}, \dots, v_{I, 1})\\ 
&= \sum_{v_1 = 0}^I \L^{I, 1}_\alpha (v_0, h_{0, 1}; v_1, h_{I, 1}) \sum_{\substack{v_{i, 1} \in \{0, 1\}\\ i = 1, \dots, I}} \Lambda_I' \big(v_1; (v_{1, 1}, \dots, v_{I, 1})\big) u(v_{1, 1}, \dots, v_{I, 1}).
\end{align*}
We repeatedly apply the above equality and propagate from the bottom row to the top row for the right panel of Figure \ref{fig:fusion}, this significantly simplifies the right hand side of \eqref{eq:key}. We have that the right hand side of \eqref{eq:key} is equal to 
\begin{equation*}
\sum_{\substack{h_{0, i}, h_{I, i} \in \{0, 1\}, i \in \{1, \dots, J\}\\v_i \in \{0, \dots, I\}, i = 1, \dots, J-1}} \Lambda_J \big(h; (h_{0, 1}, \dots, h_{0, J})\big) \prod_{j = 1}^J \L^{I, 1}_{\alpha q^{j-1}} (v_{j-1}, h_{0, j}; v_{j}, h_{I, j}) \mathbbm{1}_{\{h' = f_J (h_{I, 1}, \dots, h_{I, j})\}}.
\end{equation*}
In the above summation,  we set $v_0 = v$ and $v_J = v'$. Note that the $v_j$ in the above summation plays the role of $\sum_{i = 1}^I v_{i, j}$ that appeared on the right hand side of \eqref{eq:key}.  By the displayed equation in Lemma \ref{lem:qexchange}, the above summation is equal to the left hand side of \eqref{eq:key}. This concludes the proposition.
\end{proof} 
\begin{proof}[Proof of Proposition \ref{prop:fusion}]
It suffices to show that the tiling of the SHS6V model specified by $(g_t)_{t \in \mathbb{Z}_{\geq 0}}$ and the tiling of the unfused SHS6V model specified by $(\mathbf{f}_I (\overline{\eta}_{Jt}))_{t \in \mathbb{Z}_{\geq 0}}$ have the same total weight.
Using Proposition \ref{prop:unfused}, each vertex of the SHS6V model can be unfused into a $I \times J$ rectangle of vertices. More precisely, the vertex located at coordinate $(i, j) \in \ZZ \times \ZZ_{\geq 0}$ can be unfused into a $I \times J$ rectangle of vertices that lie on the integer lattice points of the grid $[Ii, I(i+1) - 1] \times [Jj, J(j+1) - 1]$. 
By Proposition \ref{prop:unfused}, each vertex in the rectangle has horizontal spin $1/2$ and vertical spin $1/2$. Moreover, recall that $a_{[b]} := a \text{ mod } b$, we assign the stochastic matrix $\L^{1, 1}_{\alpha q^{x_{[I]} + t_{[J]}}}$ to the vertex at $(x, t) \in [Ii, I(i+1) - 1] \times [Jj, J(j+1) - 1]$. Therefore, the weight of the vertex configuration at $(x, t)$ is  given by Figure \ref{fig:vertexconfig} with parameter $b_1 = b_1 (x, t)$ and $b_2 = b_2 (x, t)$ defined in \eqref{eq:b1b2}. This is exactly the tiling of the unfused SHS6V model defined in Definition \ref{def:unfused}. Using this together with the boundary condition $\widetilde{\eta}_0 \sim \Lambda_I' \mu$ (which is $q^{-1}$-exchangable) and the preservation of the $q$ (or $q^{-1}$)-exchangablility obtained in Lemma \ref{lem:qexchange} and Corollary \ref{cor:qexchangable}, we conclude Proposition \ref{prop:fusion}. 
\end{proof}

Proposition \ref{prop:fusion} indicates that in order to study the set of stationary distributions $\IIf$ of the SHS6V model, it suffices to study the set of distributions that are invariant under $J$-step evolution for the unfused SHS6V model. This motivates us to consider the set of stationary distributions of the time-homogeneous Markov process $(\overline{\eta}_{Jt})_{t \in \mathbb{Z}_{\geq 0}}$. We denote this set to be $\overline{\II}$.

Recall that $\SS = \{0, 1\}^{\ZZ}$, $\GG = \{0, 1, \dots, I\}^{\ZZ}$. The following lemma provides the relation between $\IIf$ and $\Ibar$.
\begin{lemma}\label{lem:statrelation}
Let
$\Muf = \{\bLambda'_I \mu: \mu \in \MM_1 (\GG)\}$.
We have $\IIf = \{\overline{\nu} \circ \mathbf{f}_I^{-1} : \overline{\nu} \in \overline{\II} \cap \overline{\MM}_1 (\SS)\}$.
\end{lemma}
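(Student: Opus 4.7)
The plan is to reduce the lemma to two facts. The first is the algebraic identity $(\bLambda'_I \mu) \circ \mathbf{f}_I^{-1} = \mu$ for every $\mu \in \MM_1(\GG)$; this is immediate from the definitions, since $\bLambda'_I$ fills each block $[Ik, I(k+1)-1]$ with a $q^{-1}$-exchangeable arrangement of the prescribed count $g(k)$, and $\mathbf{f}_I$ simply reads off that count. The second is the following \emph{propagation claim}: if $\overline{\eta}_0 \sim \bLambda'_I \mu$ and $\mu_1$ denotes the one-step distribution of the SHS6V model starting from $g_0 \sim \mu$, then $\overline{\eta}_J \sim \bLambda'_I \mu_1$. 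This is a strengthening of Proposition \ref{prop:fusion}, which only asserts distributional equality of the collapsed process $(\mathbf{f}_I(\overline{\eta}_{Jt}))_{t \geq 0}$ rather than identifying the full unfused joint law of $\overline{\eta}_J$.

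Granting these two facts, both inclusions become short. For $\IIf \subseteq \{\overline{\nu} \circ \mathbf{f}_I^{-1} : \overline{\nu} \in \Ibar \cap \Muf\}$, given $\mu \in \IIf$ I would take $\overline{\nu} := \bLambda'_I \mu \in \Muf$; the propagation claim combined with the stationarity $\mu_1 = \mu$ yields $\bLambda'_I \mu \cdot \overline{S}_J = \bLambda'_I \mu_1 = \bLambda'_I \mu = \overline{\nu}$, so $\overline{\nu} \in \Ibar$, and the algebraic identity gives $\overline{\nu} \circ \mathbf{f}_I^{-1} = \mu$. For the reverse inclusion, given $\overline{\nu} = \bLambda'_I \mu' \in \Ibar \cap \Muf$ (so that $\mu' = \overline{\nu} \circ \mathbf{f}_I^{-1}$ by the algebraic identity), I would apply Proposition \ref{prop:fusion} with $g_0 \sim \mu'$ and $\overline{\eta}_0 \sim \overline{\nu}$: this gives $g_1 \overset{d}{=} \mathbf{f}_I(\overline{\eta}_J)$, and since $\overline{\nu}$ is stationary under $\overline{S}_J$ we have $\overline{\eta}_J \sim \overline{\nu}$, whence $g_1 \sim \overline{\nu} \circ \mathbf{f}_I^{-1} = \mu'$, showing $\mu' \in \IIf$.

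The main obstacle is therefore the propagation claim. To prove it, I plan to revisit the tiling argument underlying Proposition \ref{prop:fusion}: each $(I, J)$-vertex of the SHS6V model at spatial position $k$ unfuses, via Proposition \ref{prop:unfused}, into an $I \times J$ grid of $(1, 1)$-vertices whose bottom fill is drawn from $\Lambda'_I \delta_{g_0(k)}$ and whose left fill is drawn from $\Lambda_J$ applied to the incoming horizontal-input count. Iterating Lemma \ref{lem:qexchange} column-by-column and Corollary \ref{cor:qexchangable} row-by-row preserves the $q^{-1}$-exchangeability of vertical edges and the $q$-exchangeability of horizontal edges, so that given the horizontal inputs and the counts $g_0(k), g_1(k)$, the top fill of block $k$ has distribution $\Lambda'_I \delta_{g_1(k)}$, independently across $k$. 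Marginalizing out the horizontal-input counts (which do not affect the conditional top-fill laws) yields the product-over-blocks structure $\overline{\eta}_J \sim \bLambda'_I \mu_1$.
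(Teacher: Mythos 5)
Your proof is correct and follows essentially the same route as the paper: both rest on the identity $(\bLambda'_I \mu) \circ \mathbf{f}_I^{-1} = \mu$ together with Proposition \ref{prop:fusion}, the paper simply asserting that $\mu \in \IIf$ is equivalent to $\bLambda'_I \mu \in \Ibar$ and invoking surjectivity of $\bLambda'_I$ onto $\Muf$. Your observation that the forward inclusion really needs the stronger propagation statement $\overline{\eta}_J \sim \bLambda'_I \mu_1$ (not merely $\mathbf{f}_I(\overline{\eta}_J) \sim \mu_1$) is a fair point that the paper's one-line proof leaves implicit; it is exactly what the preservation of $q^{-1}$-exchangeability in Lemma \ref{lem:qexchange} and Corollary \ref{cor:qexchangable} supplies within the proof of Proposition \ref{prop:fusion}, as you note.
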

\begin{proof}
By Proposition \ref{prop:fusion}, $\mu \in \II^{\HS}$ is equivalent to $\bLambda'_I \mu \in \Ibar$. Using the surjectivity of $\bLambda'_I: \MM_1 (\GG) \to \overline{\MM}_1 (\SS)$ and the identity $(\bLambda_{I}' \mu) \circ \mathbf{f}_I^{-1} = \mu$, we conclude that $\mu \in \IIf$ iff there exists $\overline{\nu} \in \overline{\II} \cap \overline{\MM}_1 (\SS)$ (which equals $\boldsymbol{\Lambda}'_I \mu$) such that $\mu = \overline{\nu} \circ \mathbf{f}_I^{-1}$.
\end{proof}
We proceed to study the stationary distributions of the time-homogeneous Markov process $(\overline{\eta}_{Jt})_{t \in \mathbb{Z}_{\geq 0}}$. Fix arbitrary $\rho \in [0, 1]$, we define the inhomogeneous Bernoulli product measures 
\begin{equation}\label{eq:inhomoprod}
\mubar_{\rho} := \bigotimes_{k \in \mathbb{Z}} \text{Ber}\bigg(\frac{\rho q^{\mod{(I-1-k)}{I}}}{1 - \rho+ \rho q^{\mod{(I-1-k)}{I}}}\bigg).
\end{equation}
In Appendix \ref{sec:productstat}, we show that the product Bernoulli measures are stationary for the S6V model using the property of local stationarity in Lemma \ref{lem:lcs}. A similar argument will yield that for all $\rho \in [0, 1]$, $\mubar_\rho$ is stationary for the unfused SHS6V model $(\overline{\eta}_t)_{t \in \mathbb{Z}_{\geq 0}}$, see Remark \ref{rmk:ilcs}. 
The key for proving the stationarity is observing that for a vertex at $(x, t) \in \ZZ \times \ZZ_{\geq 0}$ with $v$ $h$, $v'$ and $h'$ being the number of lines on its bottom, left, top and right, if we know that 
$(v, h) \sim \text{Ber}(\rho(x)) \otimes \text{Ber}(\zeta(t))$ where 
$$\rho(x) := \frac{\rho q^{\mod{(I-1-x)}{I}}}{1 - \rho + \rho q^{\mod{(I-1-x)}{I}}}, \qquad \zeta(t) := 
\frac{-\alpha \rho q^{I} q^{\mod{t}{J}}}{1  - \rho -  \alpha \rho q^{I} q^{\mod{t}{J}}}.$$ 
(note that $\rho(x)$ is exactly the density of $\overline{\mu}_{\rho}$ at location $x$) and sample $v', h'$ according to the stochastic weights in Figure \ref{fig:vertexconfig} with $b_1 = b_1 (x, t)$ and $b_2 = b_2 (x, t)$, one can readily check that $(v', h') \sim \text{Ber}(\rho(x)) \otimes \text{Ber}(\zeta(t))$ via a direct computation as in Lemma \ref{lem:lcs}. Note that we have
\begin{equation}\label{eq:ilcs}
\rho(x) (1 - b_1(x, t)) (1 - \zeta(t)) = \zeta(t)  (1  -b_2(x, t)) (1 - \rho(x)),   
\end{equation}
which is a analogue of the relation stated in \eqref{eq:lcs}.

By the explanation in the previous paragraph, we know that $\overline{\mu}_\rho$ is stationary for the Markov process $(\overline{\eta}_t)_{{t \in \mathbb{Z}}_{\geq 0}}$. Therefore, $\overline{\mu}_{\rho}$ is also stationary for the Markov process $(\overline{\eta}_{Jt})_{{t \in \mathbb{Z}}_{\geq 0}}$, hence we have  $\{\overline{\mu}_{\rho}\}_{\rho \in [0, 1]} \subseteq \Ibar$.
\begin{prop}\label{prop:extremeinhomo}
$\Ibar_e = \{\mubar_\rho\}_{\rho \in [0, 1]}$. 
\end{prop}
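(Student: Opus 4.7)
The plan is to mirror the proof of Theorem~\ref{thm:st}, with the role of the unit shift $\tau$ replaced by $\tau_I$ (the spatial period of the unfused SHS6V weights) and the one-step current replaced by the $J$-step current.

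First I would extend the higher rank coupling of Definition~\ref{def:coupling} to the unfused SHS6V model. Since the weights $b_1(x,t),b_2(x,t)$ still take values in $(0,1)$ and depend on $(x,t)$ only through $x_{[I]}+t_{[J]}$, the two-class construction and the coupling carry over with no change, and attractivity (Lemma~\ref{lem:mono}) is preserved. The resulting $J$-step coupled chain is time-homogeneous and commutes with the spatial shift $\widetilde{\tau}_I$. Setting $\overline{\mathcal{P}}:=\{\nu\in\mathcal{M}_1(\mathcal{S}):\nu=\nu\circ\tau_I^{-1}\}$ and $\widetilde{\overline{\mathcal{P}}}$ for the analogous $\widetilde{\tau}_I$-invariant measures on $\mathcal{S}\times\mathcal{S}$, the proof of Proposition~\ref{prop:coramol} applies essentially verbatim to show that any $\lambda\in\widetilde{\overline{\mathcal{I}}}\cap\widetilde{\overline{\mathcal{P}}}$ satisfies $\lambda(\{\eta>\xi\}\cup\{\eta=\xi\}\cup\{\eta<\xi\})=1$: the analog of Lemma~\ref{lem:axy} retains a uniform positive lower bound because $\min_{x,t}\min(b_1(x,t),b_2(x,t))>0$ (the weights range over a finite set). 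Combining this with the $\tau_I$-ergodicity of $\overline{\mu}_\rho$ (a product measure on $\mathbb{Z}$ whose marginals are periodic with period $I$, which is ergodic under the block shift $\tau_I$) and the continuity of $\rho\mapsto\overline{\mu}_\rho$, the argument of Corollary~\ref{cor:agg} yields $(\overline{\mathcal{I}}\cap\overline{\mathcal{P}})_e=\{\overline{\mu}_\rho\}_{\rho\in[0,1]}$.

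To remove the $\tau_I$-invariance hypothesis I would couple $\nu\in\overline{\mathcal{I}}$ with $\nu\circ\tau_I^{-1}$ (both are stationary for the $J$-step dynamics by spatial periodicity) via the initial distribution $\nu\circ(\mathsf{Id},\tau_I^{-1})$, and take a Cesaro limit point $\lambda$ of its $J$-step evolution. The inhomogeneous versions of Lemmas~\ref{lem:phi}--\ref{lem:discrepanciesnb2} go through because the speed of a second-class particle in the unfused model is stochastically dominated by a geometric random variable with parameter $1-\max_{x,t}b_i(x,t)$, a uniform positive number. This yields the analog of Proposition~\ref{prop:coupled}, and hence the trichotomy: for $\nu\in\overline{\mathcal{I}}_e$, either $\nu>\nu\circ\tau_I^{-1}$, $\nu=\nu\circ\tau_I^{-1}$, or $\nu<\nu\circ\tau_I^{-1}$.

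Finally I would rule out the strict cases via a current analysis on the $J$-step dynamics. Define the $J$-step current $\overline{K}_y^{(J)}:=\sum_{t=0}^{J-1}\overline{K}_y^t$, where $\overline{K}_y^t$ is the one-step current across $y$ from time $t$ to $t+1$; telescoping the one-step conservation (Lemma~\ref{lem:currentconserve}) over $t=0,\ldots,J-1$ gives
\begin{equation*}
\overline{\eta}_0(y)+\overline{K}_{y-1}^{(J)}=\overline{\eta}_J(y)+\overline{K}_y^{(J)},
\end{equation*}
so $J$-step stationarity of $\nu$ forces $y\mapsto\mathbb{E}^\nu[\overline{K}_y^{(J)}]$ to be constant. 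By the spatial $I$-periodicity of the dynamics, $\mathbb{E}^{\nu\circ\tau_I^{-1}}[\overline{K}_y^{(J)}]=\mathbb{E}^\nu[\overline{K}_{y+I}^{(J)}]$, which equals $\mathbb{E}^\nu[\overline{K}_y^{(J)}]$ by the previous constancy. On the other hand, the inhomogeneous analogs of Lemmas~\ref{lem:currentrec} and~\ref{lem:current} (with $b_i$ replaced by $b_i(x,t)$; the recursions and monotonicity proofs are formally identical) yield one-step strict monotonicity of the current in the configuration on $(-\infty,y]$, which propagates through the attractive coupling to the $J$-step current. If $\nu<\nu\circ\tau_I^{-1}$, choosing $y$ larger than any position $x^*$ at which the marginals strictly differ gives $\mathbb{E}^\nu[\overline{K}_y^{(J)}]<\mathbb{E}^{\nu\circ\tau_I^{-1}}[\overline{K}_y^{(J)}]$, contradicting the periodicity equality; the case $\nu>\nu\circ\tau_I^{-1}$ is symmetric. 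Hence $\nu=\nu\circ\tau_I^{-1}$, so $\nu\in(\overline{\mathcal{I}}\cap\overline{\mathcal{P}})_e=\{\overline{\mu}_\rho\}_{\rho\in[0,1]}$. The main obstacle I anticipate is the bookkeeping needed to show that the initial strict difference at $x^*$ actually produces a strict increase in the expected $J$-step current under the attractive coupling, i.e., that on a positive-probability event the coupled pair still differs somewhere in $(-\infty,y]$ at some time $t\in\{0,\ldots,J-1\}$ and triggers the strict part of one-step monotonicity at that time; this should hold because discrepancies cannot be created under the attractive coupling and propagate with bounded speed.
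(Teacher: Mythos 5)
Your proposal follows essentially the same route as the paper's proof: extend the higher-rank coupling to the periodic inhomogeneous weights, classify the $\tau_I$-invariant stationary measures via the discrepancy-annihilation argument, obtain the trichotomy by coupling $\nu$ with $\nu\circ\tau_I^{-1}$ through a Ces\`aro limit, and rule out the strict orderings by current monotonicity and conservation. The paper compresses the last step into ``an argument similar to the proof of Theorem \ref{thm:st}''; your telescoped $J$-step current $\overline{K}_y^{(J)}$ is exactly the right way to make it precise (since $\nu$ is only stationary for the $J$-step chain), and the bookkeeping you flag is closed by noting that the $t=0$ term already yields strictness via the analog of Lemma \ref{lem:current} while the remaining one-step currents are pathwise weakly monotone under the attractive coupling.
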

\begin{proof}
The coupling for the S6V model in Definition \ref{def:coupling} can be generalized to the unfused SHS6V model. The only difference is that we need the random variables $\{\chi^i_t (x)\}_{x \in \mathbb{Z}, t\in \ZZ_{\geq 0}, i = 1, 2}$ and $\{j^i_t (x)\}_{x \in \mathbb{Z}, t \in \ZZ_{\geq 0}, i = 1, 2}$ that appeared in Definition \ref{def:coupling} to be independent and satisfy
\begin{align*}
&\mathbb{P}(\chi^i_t(x) = 1) = b_1(x, t), \qquad \mathbb{P}(\chi^i_t (x) = 0) = 1 - b_1(x, t), \\ &\mathbb{P}(j^i_t (x) = n) = (1 - b_2(t, x+n)) \prod_{i = 1}^{n-1} b_2(t, x+i), \qquad \forall \, n \in \mathbb{Z}_{\geq 1}.
\end{align*} 
Since the parameters $b_1$ and $b_2$ in \eqref{eq:b1b2} are periodic in $x$ with period $I$, the unfused SHS6V model is invariant if we shift the space by $I$. This motivates us to consider the set of $I$-step translation invariant probability distributions to be $\Pbar := \{\nu \circ \tau_I^{-1} = \nu\}$. Using the coupling described in the last paragraph and a similar argument as in Section \ref{sec:translationinvariant}, one can show that $(\Ibar \cap \Pbar)_e = \{\mubar_\rho\}_{\rho \in [0, 1]}$. Moreover, for $\nu \in \Ibar$, we consider the coupled unfused SHS6V models with initial distribution $\theta'_{\nu} = \nu \circ (\Id, \tau_I^{-1})$. Let $(\mathfrak{S}_t)_{t \in \mathbb{Z}_{\geq 0}}$ be the semigroup of the coupled unfused SHS6V models and let $\lambda$ be an arbitrary limit point of the Ces\`{a}ro sum $\frac{1}{n}\sum_{i = 0}^{n-1} \theta_\nu \mathfrak{S}_i$ as $n \to \infty$. Using a similar argument as the proof of Proposition \ref{prop:coupled}, we conclude that 
\begin{equation*}
\lambda\Big(\{(\eta, \xi): \eta > \xi\} \cup \{(\eta, \xi): \eta = \xi\} \cup \{(\eta, \xi): \eta < \xi\}\Big) = 1.
\end{equation*}  
Using an argument which is similar to the proof of Corollary \ref{cor:shift}, we know that if $\nu \in \Ibar_e$, then   
only one of the following holds: $\nu > \nu \circ \tau_{I}^{-1}$ or  $\nu < \nu \circ \tau_I^{-1}$ or $\nu =  \nu \circ \tau_I^{-1}$. Note that the unfused SHS6V model is a space-time inhomogeneous version of the S6V model, one readily sees that the argument in Section \ref{sec:current} also applies to the unfused SHS6V model. In particular, an argument which is similar to the proof of Theorem \ref{thm:st} implies that both $\nu > \nu \circ \tau_{I}^{-1}$ and $\nu < \nu \circ \tau_{I}^{-1}$ are impossible. Hence, $\nu \in \Pbar$. This implies that $\Ibar_e \subseteq \Ibar_e \cap \Pbar \subseteq (\II \cap \Pbar)_e = \{\mubar_\rho\}_{\rho \in [0, 1]}$. On the other hand, the inhomogeneous product measures $\{\mu_\rho\}_{\rho 
\in [0, 1]}$ are ergodic with repsect to the map $\tau_I$, this implies that any element of $\{\mu_\rho\}$ is not a linear combination of others. This together with $\overline{\II}_e \subseteq \{\overline{\mu}_{\rho}\}_{\rho \in [0, 1]} \subseteq \Ibar$ imply that $\Ibar_e = \{\overline{\mu}_{\rho}\}_{\rho \in [0, 1]}$.  
\end{proof}
The proof of Theorem \ref{thm:st1} follows once we state the following lemma. 
\begin{lemma}[{\cite[Proposition 2.3]{imamura2020stationary}}]
	\label{lem:IMS}
Fix arbitrary $n \in \mathbb{Z}_{\geq 1}, \gamma > 0$ and consider independent Bernoulli random variables $Y_1, \dots, Y_n$ with $Y_i \sim \text{Ber}(\frac{q^{i-1}\gamma}{1 + q^{i-1}\gamma})$. 
Define $X = Y_1 + \dots + Y_n$, we have $X \sim q\text{NB}(q^{-n}, -q^{n} \gamma)$ (recall the definition of the $q$-negative Binomial distribution from \eqref{eq:qNB}).
\end{lemma}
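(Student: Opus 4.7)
My plan is to identify the two distributions by computing and matching their probability generating functions. Since the $Y_i$ are independent with $\mathbb{P}(Y_i=1) = \frac{q^{i-1}\gamma}{1+q^{i-1}\gamma}$ and $\mathbb{P}(Y_i=0) = \frac{1}{1+q^{i-1}\gamma}$, the pgf of $X = \sum_i Y_i$ factors as
\begin{equation*}
\mathbb{E}[z^X] \,=\, \prod_{i=1}^n \frac{1+q^{i-1}\gamma z}{1+q^{i-1}\gamma} \,=\, \frac{(-\gamma z;q)_n}{(-\gamma;q)_n}.
\end{equation*}

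Next I would compute the pgf of a $q\text{NB}(b,p)$ random variable $W$ using Heine's $q$-binomial theorem, $\sum_{k\geq 0}\frac{(b;q)_k}{(q;q)_k}z^k = \frac{(bz;q)_\infty}{(z;q)_\infty}$, applied with the substitution $z \mapsto pz$. Plugging the result into the definition \eqref{eq:qNB} gives
\begin{equation*}
\mathbb{E}[z^W] \,=\, \frac{(p;q)_\infty}{(pb;q)_\infty}\cdot\frac{(pbz;q)_\infty}{(pz;q)_\infty}.
\end{equation*}

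Substituting $b = q^{-n}$ and $p = -q^n\gamma$, one has $pb = -\gamma$ and $pbz = -\gamma z$. The two infinite products with shifted arguments telescope: $(-q^n\gamma;q)_\infty/(-\gamma;q)_\infty = \prod_{k=n}^{\infty}(1+q^k\gamma)/\prod_{k=0}^{\infty}(1+q^k\gamma) = 1/(-\gamma;q)_n$, and analogously $(-\gamma z;q)_\infty/(-q^n\gamma z;q)_\infty = (-\gamma z;q)_n$. This reduces the $q\text{NB}$ pgf to exactly $(-\gamma z;q)_n/(-\gamma;q)_n$, matching the pgf of $X$. Uniqueness of the pgf for $\mathbb{Z}_{\geq 0}$-valued random variables then concludes the proof.

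The only slightly delicate point is the application of Heine's formula and the careful bookkeeping of the telescoping identities for the infinite $q$-Pochhammer symbols; these are standard but must be checked for convergence, which holds since $|p|,|pb|<1$ under our range of parameters (or as a formal power series identity, avoiding convergence altogether). Everything else is routine algebra, and the argument completely avoids extracting individual coefficients via Gaussian binomial sums.
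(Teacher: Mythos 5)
Your argument is correct. Note, however, that the paper does not actually prove this lemma at all: it is imported verbatim as \cite[Proposition~2.3]{imamura2020stationary}, so you are supplying a proof where the paper only gives a citation. Your generating-function computation checks out: $\mathbb{E}[z^X]=\prod_{i=1}^n\frac{1+q^{i-1}\gamma z}{1+q^{i-1}\gamma}=\frac{(-\gamma z;q)_n}{(-\gamma;q)_n}$, and for $W\sim q\text{NB}(q^{-n},-q^n\gamma)$ the Heine sum terminates (since $(q^{-n};q)_k=0$ for $k>n$) and the telescoping $(−q^n\gamma;q)_\infty/(-\gamma;q)_\infty=1/(-\gamma;q)_n$ and $(-\gamma z;q)_\infty/(-q^n\gamma z;q)_\infty=(-\gamma z;q)_n$ give the same pgf, so the two laws coincide. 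The one point worth tightening is your convergence remark: $|p|=q^n\gamma$ need not be less than $1$, and for the paper's regime $q>1$ the infinite Pochhammer symbols do not converge at all. The clean fix is to observe that the normalizing ratio $(p;q)_\infty/(pb;q)_\infty$ with $b=q^{-n}$ collapses to the finite product $1/(pq^{-n};q)_n$ via $(pq^{-n};q)_\infty=(pq^{-n};q)_n(p;q)_\infty$, so both sides of the claimed identity are rational functions of $q$ and $\gamma$; verifying the identity on the open set $0<q<1$, $0<\gamma<q^{-n}$ (where your convergence argument is literally valid) then extends it to all parameter values by rational-function identity. With that caveat addressed, the proof is complete and entirely elementary.
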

\begin{proof}[Proof of Theorem \ref{thm:st1}]
By Lemma \ref{lem:statrelation}, we have $\mathcal{I}^{\HS}  = \{\mu \circ \mathbf{f}_I^{-1}: \mu \in \Ibar \cap \Muf\}$. We proceed to show that $\overline{\II} \subseteq \MM_1(\SS)$, which would imply $\Ibar \cap \MM_1(\SS) = \Ibar$.
Note that one readily checks that $\overline{\MM_1}(\SS)$ is a convex set and $\mubar_\rho \in \overline{\MM}_1(\SS)$ for any $\rho \in [0, 1]$. Using this together with Proposition \ref{prop:extremeinhomo}, we know that $\Ibar \subseteq \MM_1 (\SS)$. Hence, we have $\mathcal{I}^{\HS}  = \{\mu \circ \mathbf{f}_I^{-1}: \mu \in \Ibar\}$ and $\mathcal{I}^{\HS}_e = \{\mu \circ \mathbf{f}_I^{-1}: \mu \in \overline{\II}_e\} = \{\overline{\mu}_\rho \circ \mathbf{f}_{I}^{-1}: \rho \in [0, 1]\}$.
By Lemma \ref{lem:IMS}, we know that $\overline{\mu}_\rho \circ \mathbf{f}_{I}^{-1} = \mu_{\rho}^{\text{HS}}$ for all $\rho \in [0, 1]$. This concludes Theorem \ref{thm:st1}.
\end{proof}
We proceed to prove Theorem \ref{thm:blockingHS}. We assume that $I = J$. Recall that $(\overline{S}_{t})_{t \in \mathbb{Z}_{\geq 0}}$ is the semigroup of the unfused SHS6V model . Let $\mfIuf := \{\nu \in \MM_1 (\SS): \nu \Sbar_I = \nu \circ \tau_I^{-1}\}$. Similar to Lemma \ref{lem:statrelation}, we know that
\begin{equation}\label{eq:statrelation}
\mfIHS = \{\nu \circ \mathbf{f}_I^{-1} : \nu \in \mfIuf \cap \overline{\MM}_1 (\SS)\}.
\end{equation}
\begin{proof}[Proof of Theorem \ref{thm:blockingHS}]
Since $\mfIHS$ is convex, it suffices to show that $ \{\mu^\HS_\rho\}_{\rho \in [0, 1]} \cup \{\mu^\HS_{(n)}\}_{n \in \ZZ} \subseteq \mfIHS$ (recall their definition from \eqref{eq:qnegative} and \eqref{eq:mu*HS}). 
By Proposition \ref{prop:extremeinhomo}, we know that for all $\rho \in [0, 1]$, $\overline{\mu}_\rho \in \Ibar$. This implies that $\overline{\mu}_\rho \overline{S}_I = \overline{\mu}_\rho$. Moreover, it is clear from \eqref{eq:inhomoprod} that we have $\overline{\mu}_\rho = \overline{\mu}_\rho \circ \tau_I^{-1}$.
By Lemma \ref{lem:IMS} and \eqref{eq:statrelation}, we have $\overline{\mu}_\rho \in \mfIuf$ for all $\rho \in [0, 1]$. We proceed to show that $\mu_{*} \in \mfIuf$. To prove this, we show that $\mu_* \overline{S}_t = \mu_* \circ \tau_t^{-1}$ for all $t \in \mathbb{Z}_{\geq 0}$. Note that the unfused SHS6V model is a version of the space-time inhomogeneous S6V model and the proof for $\mu_* \overline{S}_t = \mu_* \circ \tau_t^{-1}$ is similar to the argument for the S6V model in Section \ref{sec:blk}. The key property that we have used in Section \ref{sec:blk} is that $q = \frac{b_1}{b_2}$ at every $(x, t) \in \mathbb{Z} \times \mathbb{Z}_{\geq 0}$. This is certainly true since the parameters $b_1, b_2$ of the S6V model do not depend on the space and time coordinate of the vertex. For the unfused SHS6V model, one can verify that $q = \frac{b_1(x, t)}{b_2(x, t)}$ for every $(x, t) \in \mathbb{Z \times \mathbb{Z}}_{\geq 0}$ using \eqref{eq:b1b2}. Therefore, a similar argument as in Section \ref{sec:blk} also applies here.  

The argument in the previous paragraph shows that $\{\overline{\mu}_\rho\}_{\rho \in [0, 1]} \cup \{\mu_*\} \subseteq \mfIuf$.  One also readily verifies that $\{\overline{\mu}_\rho\}_{\rho \in [0, 1]} \cup \{\mu_*\} \subseteq \overline{\MM}_1 (\SS)$. Using this together with \eqref{eq:statrelation} and Lemma \ref{lem:IMS}, we conclude that $ \{\mu_{\rho}^{\text{HS}} \}_{\rho \in [0, 1]} \cup \{\mu^{\text{HS}}_*\} \subseteq \mfIHS$ (one can check that $\mu_{*}^{\text{HS}} = \mu_* \circ \mathbf{f}_{I}^{-1}$). 
As explained after \eqref{eq:mu*HS}, $\{\blkHS{n}\}_{n \in \mathbb{Z}}$ are the projection of the probability measure $\mu^{\text{HS}}_*$ on the irreducible subspace $G_n$ of the SHS6V model. Hence, $\mu_*^{\text{HS}} \in \mfIHS$ implies that $\{\blkHS{n}\}_{n \in \mathbb{Z}} \subseteq \mfIHS$. This concludes Theorem \ref{thm:blockingHS}. 
\end{proof}

\appendix 

\section{The product Bernoulli measures are stationary for the S6V model}\label{sec:productstat}
In this section, we briefly explain 
why the product Bernoulli measures $\{\mu_\rho\}_{\rho \in [0, 1]}$ are stationary  for the S6V model. It is not easy to verify $\mu_\rho S_1 = \mu_\rho$ via a direct computation, due to complicated dynamics of the S6V model. However, we have the following property of local stationarity from \cite[Appendix A]{aggarwal2018current}.  
\begin{lemma}[Local stationarity]\label{lem:lcs}
For a vertex, let $v$, $h$, $v'$ and $h'$ be the number of lines on the bottom, left, top and right of it. We assign the vertex configuration with the stochastic weights as in Figure \ref{fig:vertexconfig}. \red{Fix $\rho, \zeta \in [0, 1]$.} If we have $(v, h) \sim \text{Ber}(\rho) \otimes \text{Ber}(\zeta)$ and  
\begin{equation}\label{eq:lcs}
(1 - b_1) \rho \red{(1 - \zeta)} = \zeta \red{(1-\rho)} (1 - b_2),
\end{equation}
then $(v', h') \sim \text{Ber}(\rho) \otimes \text{Ber}(\zeta)$. See Figure \ref{fig:lcs} for visualization. 
\end{lemma}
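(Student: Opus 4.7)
The plan is to verify local stationarity by a direct computation, essentially identical in structure to the proof of Lemma \ref{lem:bs} but with the alternative coupling relation \eqref{eq:lcs} replacing \eqref{eq:bs}. Reading off the stochastic weights from Figure \ref{fig:vertexconfig}, the six admissible vertex configurations $(v, h, v', h')$ are $(1,1,1,1), (0,0,0,0), (0,1,0,1), (0,1,1,0), (1,0,1,0), (1,0,0,1)$ with weights $1, 1, b_2, 1-b_2, b_1, 1-b_1$ respectively.

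First, I would compute the marginal probability $\mathbb{P}(v' = 1)$ by summing over the configurations with $v' = 1$, which gives
\begin{equation*}
\mathbb{P}(v' = 1) = \mathbb{P}(v=1, h=1) + \mathbb{P}(v=1, h=0) b_1 + \mathbb{P}(v=0, h=1)(1-b_2) = \rho\zeta + \rho(1-\zeta) b_1 + (1-\rho)\zeta(1-b_2).
\end{equation*}
The condition $\mathbb{P}(v' = 1) = \rho$ is then equivalent to $(1-\rho)\zeta(1-b_2) = \rho(1-\zeta)(1-b_1)$, which is exactly \eqref{eq:lcs}. An analogous computation for
\begin{equation*}
\mathbb{P}(h' = 1) = \rho\zeta + (1-\rho)\zeta b_2 + \rho(1-\zeta)(1-b_1)
\end{equation*}
shows that $\mathbb{P}(h' = 1) = \zeta$ is equivalent to the same relation \eqref{eq:lcs}.

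Finally, for the independence of $v'$ and $h'$, I would observe that the only admissible configuration with $v' = h' = 1$ is Type I, so $\mathbb{P}(v' = 1, h' = 1) = \mathbb{P}(v = 1, h = 1) = \rho\zeta = \mathbb{P}(v' = 1)\mathbb{P}(h' = 1)$, establishing independence. Since $v', h'$ take values in $\{0,1\}$, this fully determines their joint law as $\text{Ber}(\rho) \otimes \text{Ber}(\zeta)$.

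This proof is entirely computational and I do not anticipate any real obstacle; the algebraic manipulations are essentially identical to those used in the proof of Lemma \ref{lem:bs}, with the difference that here one exploits the conservation-law relation for the \emph{non-moving} configurations (weights $b_1$ and $b_2$ governing the diagonal stay events), whereas Lemma \ref{lem:bs} exploited the relation for the \emph{moving} configurations. The key conceptual point worth remarking on after the proof is the contrast between \eqref{eq:lcs} and \eqref{eq:bs}: the former encodes stationarity for the unshifted S6V model (horizontal density equal to bottom density, vertical density equal to left density), while \eqref{eq:bs} instead encodes stationarity under a moving frame of speed one (the horizontal and vertical densities swap), which is why \eqref{eq:bs} is the natural local criterion underlying the blocking-measure stationarity proved in Section \ref{sec:blk}.
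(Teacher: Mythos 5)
Your proof is correct: the identification of the six admissible configurations and their weights matches Figure \ref{fig:vertexconfig}, both marginal computations reduce exactly to \eqref{eq:lcs}, and the observation that $\{v'=h'=1\}$ occurs only in the Type I configuration pins down the joint law of the two Bernoulli variables. The paper itself omits a proof of this lemma (citing the appendix of \cite{aggarwal2018current}), but your computation is structurally identical to the paper's own proof of the analogous Lemma \ref{lem:bs}, so this is essentially the intended argument.
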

\begin{figure}[ht]
\centering
\begin{tikzpicture}
\draw[thick] (-1, 0) -- (1, 0);
\draw[thick] (0, -1) -- (0, 1);
\node at (0, -1.3) {$v$};
\node at (0, 1.3) {$v'$};
\node at (-1.6, 0) {$h$};
\node at (1.6, 0) {$h'$};
\begin{scope}[xshift = 6cm]
\draw[thick] (-1, 0) -- (1, 0);
\draw[thick] (0, -1) -- (0, 1);
\node at (0, -1.3) {$\Ber(\rho)$};
\node at (0, 1.3) {$\Ber(\rho)$};
\node at (-1.6, 0) {$\Ber(\zeta)$};
\node at (1.6, 0) {$\Ber(\zeta)$};
\end{scope}
\end{tikzpicture}
\caption{A visualization of the above lemma.
}
\label{fig:lcs}
\end{figure}
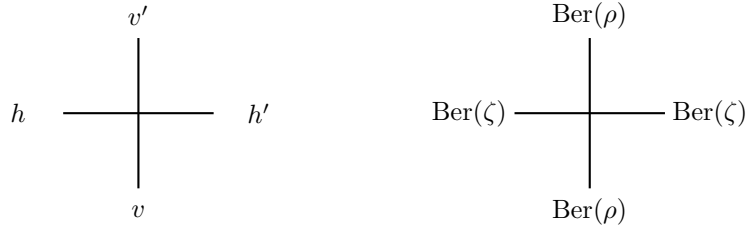

\begin{lemma}\label{lem:prodstat}
Fix arbitrary $\rho \in [0, 1]$, we have $\mu_\rho \in \II$.
\end{lemma}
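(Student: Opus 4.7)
Fix $\rho \in [0,1]$. My plan is to first define
\[
\zeta \ := \ \frac{\rho(1-b_1)}{\rho(1-b_1)+(1-\rho)(1-b_2)} \in [0,1],
\]
which is the unique parameter making the pair $(\rho,\zeta)$ satisfy the compatibility relation \eqref{eq:lcs}. The whole strategy is to propagate stationarity vertex-by-vertex from left to right along the row $\{(y,0) : y \in \ZZ\}$, invoking Lemma \ref{lem:lcs} at every step to match the product Bernoulli distribution on the bottom/left of each vertex against its top/right.

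The core of the argument is the following inductive claim on $y \in \ZZ$: under $\eta_0 \sim \mu_\rho$, the random variables $K_y,\ \eta_1(z)$ for $z\leq y$ are mutually independent with marginals $K_y \sim \mathrm{Ber}(\zeta)$ and $\eta_1(z)\sim\mathrm{Ber}(\rho)$, and moreover this whole collection is independent of $(\eta_0(w))_{w>y}$ and of the update randomness at the vertices $(w,0)$ for $w>y$. Granting the claim, sending $y\to\infty$ yields the full finite-dimensional distributions of $\eta_1$, matching $\mu_\rho$, which concludes the lemma. The inductive step itself is a direct application of Lemma \ref{lem:lcs} at the vertex $(y+1,0)$: the input pair $(v,h)=(\eta_0(y+1),K_y)$ is a product of $\mathrm{Ber}(\rho)$ and $\mathrm{Ber}(\zeta)$ by the hypothesis and the product structure of $\mu_\rho$, and the update at $(y+1,0)$ uses fresh randomness independent of the past, so Lemma \ref{lem:lcs} delivers $(v',h')=(\eta_1(y+1),K_{y+1})\sim\mathrm{Ber}(\rho)\otimes\mathrm{Ber}(\zeta)$, preserving all the required independence.

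The main obstacle is the base case, since the line has no leftmost point to start the induction. To handle this I would use the finite-interval decomposition from Section \ref{sec:infinite}: by the Borel--Cantelli argument given there, almost surely one can find random sites $\dots<d_{-1}<d_0<d_1<\dots$ with $\chi_0(p_0(d_i))=0$, and the updates on distinct intervals $[p_0(d_i),p_0(d_{i+1})]$ decouple; in particular, the current entering each interval at position $p_0(d_i)-1$ is $0$. On such a finite interval the inductive claim is genuinely initialized at the left endpoint by $K=0$, but the marginal of $K_y$ will not yet be $\mathrm{Ber}(\zeta)$. One then lets the left endpoint tend to $-\infty$: the recursion of Lemma \ref{lem:currentrec}, when averaged over $\mu_\rho$, is a strict contraction with contraction factor $\alpha:=\rho b_1+(1-\rho)b_2<1$ and unique fixed point $\zeta$, so the marginal $\mathbb{E}^{\mu_\rho}[K_y]$ converges to $\zeta$ exponentially fast in the distance to the left endpoint. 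Combining this quantitative convergence with repeated applications of Lemma \ref{lem:lcs} over a long stretch of vertices to the left of $y$ forces the joint distribution of $K_y$ and any finite window of $\eta_1$'s to the right of $y$ to coincide in the limit with the product $\mathrm{Ber}(\zeta)\otimes\bigotimes\mathrm{Ber}(\rho)$, closing the induction.

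Thus the proof reduces to (i) the algebraic check that $(\rho,\zeta)$ above satisfies \eqref{eq:lcs}; (ii) the one-vertex inductive step using Lemma \ref{lem:lcs}; and (iii) the contraction/limiting argument supplying the base case. Step (iii) is the only delicate part, and it is essentially forced by the uniqueness of the fixed point of the current recursion under $\mu_\rho$.
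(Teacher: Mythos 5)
Your proposal is correct and follows essentially the same route as the paper: compute that the current $K_y$ is $\mathrm{Ber}(\zeta)$ with $\zeta$ determined by \eqref{eq:lcs} (the paper's ``iterating Lemma \ref{lem:currentrec}'' is exactly your contraction argument, since the averaged recursion $E_y=\rho(1-b_1)+(\rho b_1+(1-\rho)b_2)E_{y-1}$ has unique fixed point $\zeta$ and contraction factor $<1$), then sweep Lemma \ref{lem:lcs} from left to right to recover the product structure of $\eta_1$. Your write-up just makes explicit the base-case/independence bookkeeping that the paper leaves implicit.
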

\begin{proof}
If we have $\eta_0 \sim \mu_\rho$, by iterating Lemma \ref{lem:currentrec}, one can check that $\red{\mathbb{E}^{\eta_0} [K_y] = \zeta := \frac{(1 - b_1)\rho}{(1 - b_2)(1-\rho) + (1 - b_1)\rho}}$, hence the current $K_y  \sim \text{Ber}(\zeta)$. 
Moreover, $K_y$ is independent of $\{\eta_{0} (z)\}_{z > y}$. 
Note that  $\zeta$ and $\rho$ satisfy the relation \eqref{eq:lcs}. We apply Lemma \ref{lem:lcs} for the vertex at $(y+1, 0)$ with $(v, h, v', h) = (\eta_0 (y+1), K_y, \eta_1 (y+1), K_{y+1})$, this
yields $\red{K_{y+1}} \sim \Ber(\zeta)$ and $\eta_1 (y+1) \sim \Ber(\rho)$. Moreover, the random variables $K_{y+1}$ and $\eta_1 (y+1)$ are independent. Keep applying Lemma \ref{lem:lcs} for the vertex at $(i+1, 0)$ with $\red{(v, h, v', h') = (\eta_0 (i+1), K_i, \eta_1(i+1), K_{i+1})}$, where $i = y+1, y+2, \dots$, we obtain that $\{(\eta_{1} (z)\}_{z > y}$ follows the product Bernoulli distribution \red{with density $\rho$}. Since $y \in \mathbb{Z}$ is arbitrary, we conclude that $\eta_1 \sim \mu_\rho$. 
\end{proof}
\begin{remark}\label{rmk:ilcs}
By \eqref{eq:ilcs}, a similar argument as above would also imply that the probability measures $\{\mubar_\rho\}_{\rho \in [0, 1]}$ defined in \eqref{eq:inhomoprod} are stationary distributions for the unfused SHS6V model $(\overline{\eta}_t)_{t \in \mathbb{Z}_{\geq 0}}$.  
\end{remark}


\bibliographystyle{alpha}
\bibliography{ref}

\end{document}